\documentclass[12pt]{article}
\usepackage[T1]{fontenc}
\usepackage[utf8]{inputenc}

\DeclareMathSizes{12}{30}{16}{12}

\usepackage{amsfonts}
\usepackage{amsmath}
\usepackage{framed}

\usepackage[english]{babel}
\usepackage{amsthm}
\usepackage{framed}
\usepackage{tikz}

\addtolength{\oddsidemargin}{0pt}
\setlength{\evensidemargin}{9pt} 
\setlength{\marginparwidth}{54pt} 
\setlength{\textwidth}{125mm} 
\setlength{\voffset}{0pt} 
\setlength{\marginparsep}{7pt} 
\setlength{\topmargin}{5pt} 
\setlength{\headheight}{13pt} 
\setlength{\headsep}{15pt} 
\setlength{\footskip}{37pt} 
\setlength{\textheight}{195mm} 

\usepackage{amsmath}
\usepackage{amsfonts}
\usepackage{amssymb}
\usepackage{mathtools}
\usepackage{commath}
\usepackage{graphicx} 
\usepackage[sc,osf]{mathpazo}

\newcommand{\R}{\mathbb{R}}

\newtheorem{theorem}{Theorem}
\newtheorem{lemma}{Lemma}
\newtheorem{proposition}{Proposition}
\newtheorem{corollary}{Corollary}

\newtheorem{prop}{Prop}


\newcommand{\barc}{\bar c_\eta} 
\newcommand{\uc}{\underline{c}_\eta} 
\newcommand{\bulk}{{\mathrm {bulk}}} 

\newcommand{\poubelle}[1]{}

\newcommand{\cI}{c_I}
\newcommand{\cA}{c_A}
\newcommand{\cgamma}{c_\gamma}
\newcommand{\Cgamma}{C_\gamma}

\newcommand{\cL}[1]{c_{#1}}
\newcommand{\indep}{\perp \!\!\! \perp}


\newcommand*\circled[1]{\tikz[baseline=(char.base)]{
            \node[shape=circle,draw,inner sep=2pt] (char) {#1};}}




\let\norm\undefined 
\DeclarePairedDelimiter\norm{\lVert}{\rVert}

\newcommand{\revision}[1]{\textcolor{black}{#1}}

\usepackage{authblk}
\usepackage{lmodern}

\title{Sharp Local Minimax Rates for Goodness-of-Fit Testing in multivariate Binomial and Poisson families and in multinomials}
\author[1]{Julien Chhor }
\author[2]{Alexandra Carpentier}
\affil[1]{CREST/ENSAE}
\affil[2]{OvGU, Magdeburg}
\date{}

\begin{document}

\maketitle

Contact: julien.chhor@ensae.fr, carpentier@uni-potsdam.de

\newpage

\begin{abstract}
We consider the identity testing problem - or goodness-of-fit testing problem - in multivariate binomial families, multivariate Poisson families and multinomial distributions. 
Given a known distribution $p$ and $n$ iid samples drawn from an unknown distribution $q$, we investigate how large $\rho>0$ should be to distinguish, with high probability, the case $p=q$ from the case $d(p,q) \geq \rho $, where $d$ denotes a specific distance over probability distributions. 
We answer this question in the case of a family of different distances: $d(p,q) = \norm{p-q}_t$ for $t \in [1,2]$ where $\|\cdot\|_t$ is the entrywise $\ell_t$ norm. Besides being locally minimax-optimal - i.e. characterizing the detection threshold in dependence of the known matrix $p$ - our tests have simple expressions and are easily implementable. 
\end{abstract}

\textbf{Keywords:} Minimax Identity Testing, Goodness-of-fit Testing, Multinomial Distributions, Multivariate Poisson Families, Locality.

\section{Introduction}

\revision{We consider the problem of \textit{identity testing} or \textit{goodness-of-fit testing} in multivariate binomial families, multivariate Poisson families and multinomial distributions.
At a high level, this problem aims at testing whether or not the data distribution matches a given known distribution. 
Throughout the paper, we will state the results in the multivariate binomial setting, and will establish the link with multivariate Poisson families and multinomials later on. 
The problem can be stated as follows: given $n$ i.i.d.~realizations of an unknown multivariate Binomial family - see Section~\ref{Pb_Statement} 
- with unknwon distribution $q$, and given a known distribution $p$, we want to test 
$$\mathcal{H}_0: p=q~~~~~~~vs~~~~~~~\mathcal{H}_1: d(p,q) \geq \rho,$$
for a given distance $d$ and separation radius $\rho$. }\\

The difficulty of this testing problem is characterized by the minimal separation radius $\rho$ needed to ensure the existence of a test that is uniformly consistent under both the null and the alternative hypothesis - i.e.~a test whose worst-case error is smaller than a given $\eta>0$, and to identify such a test. 
See Section~\ref{Pb_Statement} for a precise definition of the setting.\\

In this paper, we will mostly focus on the following goals:
\begin{itemize}
\item We focus on the case where the distance $d$ is the $\ell_t$ distance, namely, if $p = (p_1,\dots,p_N)$ and $q = (q_1,\dots,q_N)$, then $d(p,q) = \Big(\sum\limits_{i=1}^N |q_i - p_i|^t\Big)^{1/t}$ for any $t\in [1,2]$. Typically, the case $t=2$ and $t=1$ (total variation distance for discrete distributions) are considered, and we interpolate between these two extreme cases.
    \item Our main objective will be to develop tests - as well as matching lower bounds - 
    for this identity testing problem that are \textit{locally optimal} in that the minimax separation distance $\rho$ should depend tightly on $p$. 
    Indeed, it is clear that some $p$ will be ``easier'' to test than others. 
    Consider e.g.~the following two extreme cases in the case of discrete (multinomial) distributions over $\{1,\dots,N\}$: 
    (i)~the very ``easy'' case where $p$ is a Dirac distribution on one of the coordinates, which implies a very low noise, and 
    (ii) the very ``difficult'' case where all entries of $p$ are equal to $1/N$, which maximizes the noise. 
    It is clear that the minimax local separation distance should differ between these two cases and be much smaller in case (i) than in case (ii). 
    We aim at studying the minimax local separation distance for any $p$, and characterize tightly its shape depending on $p$.
\end{itemize}

The existing literature about hypothesis testing~\cite{neyman1933ix} is profuse: the goodness-of-fit problem has been thoroughly studied, especially in the case of signal detection in the Gaussian setting, notably by Ingster - see~\cite{ingster2012nonparametric} - and has given rise to a vast literature. In parallel to the study of hypothesis testing, there exists a broad literature on the related problem of property testing with seminal papers such as \cite{rubinfeld1996robust,goldreich1998property}.\\

The identity testing problem in multinomials - i.e. probability distributions over a finite set - has been widely studied in the literature. We refer the reader to~\cite{CanonneTopicsDT2022}, \cite{canonne2020survey}, \cite{balakrishnan2018hypothesis} for excellent surveys.
When observing $n$ iid data with unknown discrete distribution $q$ and when fixing a distribution $p$, the aim is to derive the minimal separation distance $\rho$ so that a uniformly consistent test exists for testing $\mathcal{H}_0: p=q \text{ vs } \mathcal{H}_1(\rho): d(p,q)\geq \rho$. 
Note that this problem is also often considered in the dual setting of \textit{sample complexity}, where the goal is to find the minimal number of samples $n$ such that a consistent test exists for a given separation $\rho>0$. 
One distinguishes between \textit{global results} which are obtained for the worst case of the distribution $p$, and \textit{local results}, where the minimax separation distance is required to depend precisely on any given $p$. For global results, see e.g.~\cite{ingster1984asymptotically} (in Russian), \cite{ingster1986minimax}, \cite{ermakov1995minimax}, \cite{gine2016mathematical}, \cite{paninski2008coincidence}, and also in the related two-sample testing problem - where both $p,q$ are unknown and observed through samples - see e.g.~\cite{batu2000testing,chan2014optimal}. 
In the present paper, we focus on \textit{local} results. 
In the case of the $\ell_1$ distance, important contributions to local testing have been established in e.g.~\cite{valiant2017automatic}, \cite{diakonikolas2016new}. 
Note that these papers provide results in terms of sample complexity, and more recently, the paper~\cite{balakrishnan2019hypothesis} has re-considered this problem in terms of minimax separation distance - focusing also on the case of smooth densities. 
Another quite related work is \cite{berrett2020locally}, investigating the rate of goodness-of-fit testing in the multinomial case, in the $\ell_1$ and $\ell_2$ distances, under privacy constraints. 
Regarding the related two sample testing problem, see~\cite{acharya2012competitive, bhattacharya2015testing, diakonikolas2016new, kim2018robust}. 
This multinomial framework proves very useful for a wide range of applications, which include Ising models \cite{daskalakis2019testing}, bayesian networks \cite{canonne2020testing} or even quantum mechanics \cite{buadescu2019quantum}.\\

The papers~\cite{valiant2017automatic, balakrishnan2019hypothesis} are the most related to our present results, due to the equivalence between the multivariate binomial and Poisson distribution settings and the multinomial setting after a Poissonization trick  - see section \ref{ss:mult} for more details on why our setting encompasses those settings. 
We postpone a precise discussion between our result and this stream of literature to the core of the paper\footnote{We compare with this stream of literature under our upper and lower bounds in Sections~\ref{Results}, and also in the discussion in Section~\ref{Discussion}.}, since it is technical. As high-level comments, we restrict to remarking this stream of literature only considers separation in total variation distance, namely the $\ell_1$ distance for discrete distributions. \\

Note that goodness-of-fit testing for inhomogeneous Erdös-Rényi random graphs (see the definition e.g. in~\cite{ghoshdastidar2020two}), is a direct an important corollary of our result about multivariate binomial local testing. 
This result is therefore interesting as only little literature exists about identity testing in random graphs - and to the best of our knowledge, no literature exists about \textit{local} identity testing in the sense described above (see for example~\cite{dan2020goodness} for global testing in inhomogeneous random graphs). 
In recent machine learning and statistical applications, the increasing use of networks has made large random graphs a decisive field of interest. To name a few topics, let us mention community detection, especially in the stochastic block model (\cite{abbe2017community}, \cite{arias2014community}, \cite{verzelen2015community}, \cite{abbe2016achieving}, \cite{decelle2011asymptotic}), in social networks (\cite{bedi2016community}, \cite{wang2015community}), as well as network modeling (\cite{albert2002statistical}, \cite{lovasz2012large}), or network dynamics (\cite{berger2005spread}). 
The papers~\cite{ghoshdastidar2020two} and \cite{ghoshdastidar2018practical} propose an analysis of the two sample case, under sparsity: Given two populations of mutually independent random graphs, each population being drawn respectively from the distributions $P$ and $Q$, they perform the minimax hypothesis testing $\mathcal{H}_0: P=Q$ vs $\mathcal{H}_1: d(P,Q) \geq \rho$ for a variety of distances $d$, and identify optimal tests over the classes of sparse graphs that they consider. The paper \cite{qian2014efficient} identifies a computationally efficient algorithm for testing the separability of two hypotheses. Testing between a stochastic block model versus an Erdös-Rényi model has been studied in \cite{gao2017testing} and \cite{lei2016goodness}. 
Phase transitions are also known for detecting strongly connected groups or high dimensional geometry in large random graphs (\cite{bubeck2016testing}). The paper \cite{tang2017semiparametric} tests random dot-product graphs in the two sample setting with low-rank adjacency matrices. The paper~\cite{ghoshdastidar2017two} examines a more general case in which the graphs are not necessarily defined on the same set of vertices. 
To summarize, only few papers address the construction of efficient tests in random graphs - although this would be valuable in various areas such as social networks \cite{moreno2013network}, brain or `omics’ networks \cite{ginestet2017hypothesis} \cite{hyduke2013analysis}, testing chemicals \cite{shervashidze2011weisfeiler} or ecology and evolution \cite{croft2011hypothesis}. Moreover, and to the best of our knowledge, no paper considers the \textit{local version} of the testing problem - i.e. focuses on obtaining separation distances that depend on the null hypothesis.\\

The paper is organized as follows: In Section \ref{Pb_Statement}, we describe the setting by defining the multivariate binomial model and the minimax framework. In Section \ref{Results}, state our main theorem, which gives an explicit expression of the minimax separation radius as a function of $p$ and $n$. In Section \ref{ss:mult}, we establish the equivalence between the binomial, the Poisson and the multinomial settings. In Section \ref{Discussion}, we discuss our results, by comparing them with the state of the art, especially with the multinomial setting.
In Section~\ref{sec:Lower_bounds}, we describe our lower bound construction.
In Section~\ref{sec:Upper_bounds}, we describe our tests and state theoretical results guaranteeing their optimality.
We finally provide additional comments on our results in Section~\ref{sec:further_remarks}. All proofs are deferred to the Appendix. 

\section{Problem statement}\label{Pb_Statement}

\vspace{-10mm}

\revision{
\subsection{Setting}\label{ss:not}}

\revision{
We first introduce the Binomial setting. In Section~\ref{ss:mult}, we will introduce two other very related settings (the Multinomial and the Poisson settings) and prove that the associated minimax rates can be deduced from the Binomial case.}\\

\revision{Let $N \in \mathbb N$, $N \geq 2$ and define $\mathcal{P}_N = [0,1]^N.$
Let $q=(q_1, \dots, q_N) \in \mathcal{P}_N$ be an unknown vector of Bernoulli parameters. Assume that we observe $X_1, \dots, X_n$ iid such that each $X_i$  can be written as $X_i = \big(X_i(1), \dots, X_i(N)\big)$ where all of the entries $X_i(1), \dots, X_i(N)$ are mutually independent and $X_i(j) \sim Ber(q_j)$. We slightly abuse notation and write $X_1, \dots, X_n \overset{iid}{\sim} q$ when $X_1,\dots, X_n$ are generated with this distribution. Assume that $n$ is even: $n=2k$, for $k \in \mathbb{N}$. This assumption can be made \textit{wlog} and makes the analysis of the upper bound more convenient by allowing for sample splitting. We denote the total variation distance between two probability measures by $d_{TV}$ and for any $p \in \R^N$ and for $t > 0$,  we define 
\begin{align*}
    \| \hspace{.3mm} p \hspace{.3mm} \|_t & = \bigg[\sum_{j=1}^N |p_j|^t\bigg]^{1/t}.
\end{align*}}

\subsection{Minimax Testing Problem}\label{ss:min}


We now define the testing problem considered in the paper. Let $\eta \in (0,1)$ be a fixed constant and let $t\in [1,2]$. 
We are given a known vector $p \in  \mathcal{P}_N$ and we suppose that the data is generated from an unknown vector $q$: $X_1,\dots, X_n \overset{iid}{\sim} q$. We are interested in the following testing problem:\\
\begin{equation}\label{Test}
    \mathcal{H}_0^{p} : q=p \hspace{10mm} \text{vs} \hspace{10mm} \mathcal{H}_1^{\rho, p,t}: q \in \mathcal{P}_N; \; \|p-q\|_t \geq \rho.
\end{equation}\\
This problem is called ``goodness-of-fit testing problem''. When no ambiguity arises, we write $\mathcal{H}_0$ and $\mathcal{H}_1$ to denote the null and alternative hypotheses.\\ 

A \textbf{\textit{test}} $\psi$ is a measurable function of the observations $X_1,\dots, X_n$, taking only the values $0$ or $1$. 
We measure the quality of any test $\psi$ by its \textbf{\textit{maximum risk}}, defined as:
\begin{align}
    R(\psi) &:= R_{\rho, p, t, n}(\psi)\nonumber\\ 
    &= \mathbb{P}_p(\psi = 1) + \sup_{\substack{q \text{ s.t. } \\ \|p-q\|_t \geq \rho}} \mathbb{P}_q(\psi = 0).\label{risk}
\end{align}

$R(\psi)$ is the sum of the type-I and the type-II errors.
\poubelle{
\begin{itemize}
    \item $ \mathbb P_P(\psi = 1) $ is the type-I error, i.e. the probability of deciding $\mathcal{H}_1$ when $\mathcal{H}_0$ holds.
    \item $\sup_{\|p-q\|_t \geq \rho} \mathbb{P}_q(\psi = 0)$ is the worst-case type-II error. Indeed, for $Q$ such that $\|p-q\|_t \geq \rho$,  $\mathbb{P}_q(\psi = 0)$ corresponds to the probability of deciding in favor of $\mathcal{H}_0$ when $\mathcal{H}_1$ holds. Therefore, $\sup_{d(P,Q) \geq \rho} \mathbb{P}_q(\psi = 0)$ is the type-II error of $\psi$ if the data were drawn from the most unfavorable distribution $Q \in \mathcal{H}_1$ for $\psi$.
\end{itemize}}

\hfill


The \textbf{\textit{minimax risk}} is the risk of the best possible test, if any:
\begin{align*}
    R^* & := R^*_{\rho, p, t, n} =  \inf_{\psi \text{ test}} R(\psi)\\
    & = \inf_{\psi \text{ test}} \left[\mathbb{P}_p(\psi = 1) + \sup_{Q: \|p-q\|_t\geq \rho} \mathbb{P}_q(\psi = 0)\right].
\end{align*}
Note that $R^*:= R^*_{\rho, p, t, n}$ depends on the choice of the norm indexed by~$t$, the vector $p$, the separation radius $\rho$, and the sample size $n$. Since all quantities depend on $p$, we say that the testing problem is \textit{local} - around $p$ - as opposed to classical approaches in the minimax testing literature, where one generally only considers a family of vectors $p$ and focuses only on the worst case results over this family - see e.g.~\cite{ghoshdastidar2017two}. \\

In the following, we fix an absolute constant $\eta \in (0,1)$ and \textbf{we are interested in finding the smallest $\rho^*_{p,t,n}$ such that $R^*_{\rho^*_{p,t,n}, p,t,n} \leq \eta$}: 
\begin{equation}\label{eq:sepdist}
    \rho^*_{p,t,n}(\eta) = \inf \left\{ \rho >0 \; : \; R^*_{\rho, p,t,n} \leq \eta \right\}.
\end{equation}
We call $\rho^*_{p,t,n}(\eta)$ the \textit{$\eta$-minimax separation radius}. Whenever no ambiguity arises, we drop the indexation in $n, p, t,\eta$ and write simply $\rho^*, R^*_\rho, R_\rho(\psi)$ - but these variables remain important, as will appear later on.\\




The aim of the paper is to give the explicit expression of $\rho^*_{p,t,n}$ up to constant factors depending only on $\eta$ and to construct optimal tests, for any $p \in \mathcal{P}_N$ and all $t \in [1,2]$. 



\paragraph{Additional notation.}

Let $\eta>0$. For $f$ and $g$ two real-valued functions defined, we say that $f \lesssim_\eta g$ (resp. $f \gtrsim_\eta g$) if there exists a constant $c_\eta >0$ (resp. $C_\eta >0$) depending only on $\eta$, such that $c_\eta g \leq f$ (resp. $f \geq C_\eta g$). We write $f \asymp_\eta g$ if $g \lesssim_\eta f \text{ and } f \lesssim_\eta g$. Whenever the constants are absolute, we drop the index $\eta$ and just write $\lesssim, \gtrsim, \asymp$. We respectively denote by $x \vee y$ and $x \wedge y$ the maximum and minimum of the two real values $x$ and $y$. 




\section{Results}\label{Results}

Without loss of generality, assume that $\max\limits_{1\leq j \leq N} p_j \leq \frac{1}{2}$. Otherwise, if for some $j \in \{1,\dots, N\}, \; p_j > \frac{1}{2}$, replace $p_j$ by $1 - p_j$ and replace accordingly $X_i(j)$ by $1-X_i(j)$ for all $i = 1, \cdots, n = 2k$. 
\textit{Wlog}, assume that all entries of the known vector $p$ are sorted in decreasing order: 
$$p = (p_1\geq p_2 \geq \cdots \geq p_N).$$

For any index $1 \leq u \leq N$, we define the vectors
\begin{align*}
    \begin{cases}
    p_{\leq u} = (p_1, \cdots, p_u, 0, \cdots, 0)\\
    p_{> u} = (0, \cdots, 0, p_{u+1}, \cdots, p_N).
    \end{cases}
\end{align*}

Let $\eta>0$. In what follows, we write 
\begin{equation}\label{def_r_b}
    r = \frac{2t}{4-t} \hspace{5mm} \text{ and } \hspace{5mm} b = \frac{4-2t}{4-t}. 
\end{equation}
for $p$ we also define 
\begin{equation}\label{def:def_I}
    I = \min \left\{J : \sum_{i>J} p_i^2 \leq \frac{\cI}{n^2}\right\}
\end{equation}
where $\cI$ is a small enough constant depending only on $\eta$. We will prove the following theorem.
\revision{\begin{theorem}\label{Rate} For all $t \in [1,2]$, the following bound holds, up to a constant depending only on $\eta$ and $t$:
$$\rho^* \asymp_{\eta,t} \sqrt{\frac{\big\|p_{\leq I}\big\|_r}{n}} +  \frac{\big\|p_{>I}\big\|_1^{\frac{2-t}{t}}}{n^\frac{2t-2}{t}}  + \frac{1}{n},$$
where we recall that $I=I(n,p,t)$.
\end{theorem}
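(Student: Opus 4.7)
The plan is to prove Theorem~\ref{Rate} by establishing matching upper and lower bounds on $\rho^*$, each decomposed along the three terms of the stated rate. These correspond respectively to (i) a chi-squared / variance cost on the ``heavy'' indices $j \le I$, (ii) a linear / counting cost on the ``light'' indices $j > I$ where the chi-squared variance becomes prohibitive, and (iii) the trivial single-observation resolution limit $1/n$.

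For the \textbf{upper bound}, I would use the sample splitting $n = 2k$ and combine two statistics. On the heavy indices, build a weighted debiased chi-squared U-statistic
\[
T_H \;=\; \sum_{j \le I} w_j \,(Y_j - k p_j)(Z_j - k p_j),
\]
with $Y_j, Z_j$ the counts in the two halves. Choosing the weights $w_j$ via a Lagrangian that balances the null variance $\sum_j w_j^2 p_j^2$ against the alternative shift $n^2 \sum_j w_j (p_j - q_j)^2$ under the constraint $\|p-q\|_t \ge \rho$, H\"older duality forces $w_j \propto p_j^{-(4-2t)/(4-t)}$ and a detection threshold of order $\sqrt{\|p_{\le I}\|_r/n}$ with $r = 2t/(4-t)$. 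On the light indices, a linear/counting statistic such as $T_L = \sum_{j>I}(Y_j + Z_j - n p_j)$, analyzed by Bernstein's inequality, handles the residual contribution via the elementary embedding $\|u\|_t \le \|u\|_1^{(2-t)/t}\,\|u\|_\infty^{(2t-2)/t}$ applied to $u = (p-q)_{>I}$, combined with the fact that any coordinate shift detectable at sample size $n$ is at least of order $1/n$. Rejecting whenever either statistic exceeds its threshold yields the three-term bound: Type I errors are controlled by Chebyshev/Bernstein, Type II by the Lagrangian analysis under the separation constraint.

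For the \textbf{lower bound}, I would apply the chi-squared version of Le Cam's method: for each of the three terms, it suffices to construct a prior $\pi$ on alternatives with $\chi^2\bigl(\mathbb{E}_\pi[\mathbb{P}_q^{\otimes n}] \;\big\|\; \mathbb{P}_p^{\otimes n}\bigr) \le C_\eta$ and $\pi(\|p-q\|_t \ge \rho) \ge 1 - \eta/2$. The independence of the Bernoulli coordinates makes $\chi^2$ factorize across $j$, which is the key simplification. For the first term, set $q_j = p_j + \epsilon_j \delta_j$ with $\epsilon_j$ i.i.d.~Rademacher and $\delta_j \propto p_j^{2/(4-t)}$; the Rademacher mixture of $(1 \pm a_j)^n$ gives $\chi^2 \lesssim n^2 \sum_j \delta_j^4/p_j^2 \lesssim 1$, while the same H\"older duality as above forces $\|p-q\|_t \asymp \sqrt{\|p_{\le I}\|_r/n}$. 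For the second term, a prior placing small perturbations on a carefully sized subset of light indices saturates the $\|p_{>I}\|_1^{(2-t)/t}/n^{(2t-2)/t}$ bound, and the $1/n$ term is a trivial two-point perturbation on a single coordinate.

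The principal obstacle is justifying the boundary cutoff $I$ defined in \eqref{def:def_I}. The Lagrangian optimum $\delta_j \propto p_j^{2/(4-t)}$ in the lower bound is admissible (that is, $\delta_j \le p_j$, so that $q_j \in [0,1]$, and produces a bounded chi-squared) exactly when $p_j$ exceeds a threshold determined by $n$, which is equivalent to the $\ell_2$-tail condition $\sum_{i>J} p_i^2 \le c_I/n^2$ defining $I$. Matching this same cutoff in the upper bound so that $T_H$ has controlled null variance, and verifying that neither enlarging nor shrinking the support of the prior improves the rate, is the subtle point; once the heavy/light split is in place, the remaining estimates are standard moment and tail bounds.
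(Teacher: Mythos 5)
Your bulk treatment coincides with the paper's: the weighted statistic with $w_j\propto p_j^{-b}$, $b=\tfrac{4-2t}{4-t}$, is exactly $T_{\bulk}$ in \eqref{test_bulk}, and the Rademacher prior with perturbations $\propto p_j^{2/(4-t)}$ is the paper's bulk prior \eqref{Prior_bulk}--\eqref{gamma}. The genuine gap is in the tail, on both sides. For the upper bound you propose only the counting statistic $T_L=\sum_{j>I}(Y_j+Z_j-np_j)$, but a first-moment statistic cannot attain the tail rate for $t\in(1,2]$: its null fluctuations are of order $\sqrt{\|p_{>I}\|_1/n}$, while the alternative may concentrate its tail perturbation on a single coordinate. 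Concretely, take $N\asymp n^{3/2}$ tail coordinates with $p_j\asymp n^{-2}$ (so $\sum_{j}p_j^2\asymp n^{-5/2}$, hence $I=0$, and $\|p_{>I}\|_1\asymp n^{-1/2}$), $t=3/2$, and $q=p$ except $q_{j_0}=p_{j_0}+\epsilon$ with $\epsilon\asymp \|p_{>I}\|_1^{(2-t)/t}n^{-(2t-2)/t}\asymp n^{-5/6}$: the mean shift of $T_L$ is $\epsilon\ll n^{-3/4}\asymp\sqrt{\|p_{>I}\|_1/n}$, so any threshold giving level $\eta$ misses this alternative, yet the theorem requires its detection. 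This is precisely why the paper pairs the counting test $\psi_1$ of \eqref{test_psi1} with the collision test $\psi_2$ of \eqref{test_psi2} (here $n^2q_{j_0}^2\to\infty$, so $\psi_2$ fires), and stresses that both are needed when $t\in(1,2)$. Relatedly, your interpolation inequality is misstated: at $t=2$ it would read $\|u\|_2\le\|u\|_\infty$. The correct bound is $\|u\|_t^t\le\|u\|_1^{2-t}\|u\|_2^{2t-2}$, and the $\ell_2$ factor is exactly what forces a second-moment/collision test on the tail, not a sup-norm afterthought.

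On the lower bound, the tail prior is left as ``small perturbations on a carefully sized subset,'' but this is where the work lies. The paper's prior \eqref{prior_tail} sets each tail coordinate to $0$ or to a common value $\bar\pi=c_u/(n^2\|p_{\geq U}\|_1)$, which is typically much \emph{larger} than the individual $p_j$, with Bernoulli weights matching the null first moment; its $\chi^2$ divergence to the null is not bounded coordinatewise (collision events blow it up), so the ``$\chi^2$ factorizes across $j$'' route you rely on works for the bulk but not here. The paper instead bounds total variation directly after truncating to the event that no tail coordinate is observed twice, and only then uses a Cauchy--Schwarz/$\chi^2$-type bound on that event. Finally, the admissibility condition $\gamma_j\le p_j$ does \emph{not} reproduce the definition of $I$ in \eqref{def:def_I}: it defines the distinct index $A$ of \eqref{def_A}, the sparse tail prior lives beyond a third index $U$, and the paper devotes Lemmas \ref{I_and_U}, \ref{A_and_I} and \ref{I_to_A} to showing that exchanging $A$, $U$ and $I$ does not change the rate. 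Your proposal flags this as ``the subtle point'' but supplies neither these reductions nor the Type II analysis of the weighted statistic run up to $I$, which needs the lower bound $p_j^{b}\gtrsim 1/\bigl(n\|p_{\leq I}\|_r^{r/2}\bigr)$ that holds only up to $A$.
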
}

The lower bounds and the minimax test are given in Section~\ref{sec:Lower_bounds} and Section~\ref{sec:Upper_bounds}.

\subsection{Equivalence between the Binomial, the multinomial and the Poisson setting}\label{ss:mult}

We now move to the multinomial and Poisson settings. In the following propositions, we state that the multinomial and the multivariate Binomial model are equivalent to the multivariate Poisson setting after using the \textit{Poissonization trick}, and that the results from the binomial setting can be transferred to the other two settings. The Poissonization trick consists in drawing $\widetilde n \sim Poi(n)$ observations instead of $n$, either from the multinomial or from the multivariate binomial model. The resulting data is exactly distributed as a multivariate Poisson family.
\begin{prop}[Poissonization trick for multinomials]\label{Poissonization_trick_mult}
Let $n \geq 2$ and assume that $p,q$ are probability vectors, i.e.~such that $\sum_i p_i = \sum_i q_i =1$. Let $\widetilde n \sim Poi(n)$. Conditional on $\widetilde n$, let $Z_1, \cdots, Z_{\widetilde n} \overset{iid}{\sim} \mathcal{M}(q)$. We build the histogram sufficient statistic by defining, for all $j=1, \cdots, N$,  $H_j = \sum_{i=1}^{\widetilde n} \mathbb{1}\left\{ Z_i = j \right\}$. Then for all $j$, $H_j \sim Poi(nq_j)$ and $H_1, \cdots, H_N$ are mutually independent.
\end{prop}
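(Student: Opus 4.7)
The plan is to compute the joint probability mass function of $(H_1,\dots,H_N)$ directly by conditioning on $\widetilde n$ and to exhibit the product structure of independent Poissons. This is a classical computation, and the probabilistic content boils down to a clean algebraic identity that relies crucially on the assumption $\sum_j q_j = 1$.

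First, I would fix nonnegative integers $h_1,\dots,h_N$ and set $m=\sum_{j=1}^N h_j$. Observe that, conditional on $\widetilde n = m$, the vector $(H_1,\dots,H_N)$ is the histogram of $m$ i.i.d.\ draws from $\mathcal M(q)$, hence it follows a multinomial distribution $\mathcal M(m,q)$, and in particular the event $\{H_j=h_j\ \forall j\}$ forces $\widetilde n = m$. Using the law of total probability together with the fact that $\widetilde n \sim \mathrm{Poi}(n)$, I would write
\begin{align*}
    \mathbb P(H_1 = h_1,\dots,H_N = h_N)
    &= \mathbb P(\widetilde n = m)\,\mathbb P\bigl(H_1=h_1,\dots,H_N=h_N \mid \widetilde n = m\bigr)\\
    &= e^{-n}\,\frac{n^m}{m!}\cdot \frac{m!}{h_1!\cdots h_N!}\prod_{j=1}^N q_j^{h_j}.
\end{align*}

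Next, the key simplification step is to use $\sum_{j=1}^N q_j = 1$, which allows the decomposition $e^{-n}=\prod_{j=1}^N e^{-nq_j}$ and $n^m = \prod_{j=1}^N n^{h_j}$. Substituting these into the display yields
\begin{equation*}
    \mathbb P(H_1 = h_1,\dots,H_N = h_N)
    = \prod_{j=1}^N \frac{(nq_j)^{h_j}}{h_j!}\,e^{-nq_j},
\end{equation*}
which is precisely the product of $\mathrm{Poi}(nq_j)$ probability mass functions. Since the joint law factorizes as a product of marginals, this simultaneously establishes that $H_j \sim \mathrm{Poi}(nq_j)$ for each $j$ and that $H_1,\dots,H_N$ are mutually independent.

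There is no real obstacle here; the only subtle point is recognizing that the probabilistic conditioning eliminates the factor $m!$ and that the constraint $\sum_j q_j = 1$ is what permits the crucial factorization of $e^{-n}$ across coordinates (without this, one would only obtain a joint law supported on a fixed total mass). I would present the argument in this order: (i) conditional multinomial law, (ii) marginalization against the Poisson law of $\widetilde n$, (iii) algebraic factorization using $\sum_j q_j = 1$.
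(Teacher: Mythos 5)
Your proof is correct, and it is precisely the classical computation the paper has in mind: the paper gives no proof of Proposition~\ref{Poissonization_trick_mult}, stating only that it follows from basic properties of the Poisson and multinomial distributions, and your conditioning-plus-factorization argument is that standard derivation. One minor remark: the factorization $n^m=\prod_{j=1}^N n^{h_j}$ uses only $m=\sum_j h_j$, not the hypothesis $\sum_j q_j=1$; that hypothesis is needed solely for writing $e^{-n}=\prod_{j=1}^N e^{-nq_j}$.
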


\begin{prop}[Poissonization trick for binomial families]\label{Poissonization_trick_bin}
Let $n \geq 2$ and $\widetilde n \sim Poi(n)$. Conditional on $\widetilde n$, let $X_1, \cdots, X_{\widetilde n} \overset{iid}{\sim} \bigotimes_{j=1}^N Ber(p_j)$. Then $\sum_{i=1}^{\widetilde n} X_i \sim \bigotimes_{j=1}^N Poi(np_j)$.
\end{prop}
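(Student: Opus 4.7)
The plan is to carry out the classical Poissonization/thinning argument coordinate by coordinate, and then to verify the full joint distributional statement through the probability generating function (PGF).

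First, for a fixed $j \in \{1,\ldots,N\}$, set $Y_j := \sum_{i=1}^{\widetilde n} X_i(j)$. Conditional on $\widetilde n$, the variables $X_1(j),\ldots,X_{\widetilde n}(j)$ are iid $Ber(p_j)$, so the classical Poisson compounding lemma (a Poisson-number sum of iid Bernoullis is Poisson) gives $Y_j \sim Poi(np_j)$. I would verify this by a direct PGF computation: $\mathbb{E}[z^{Y_j}\mid \widetilde n] = (1 - p_j + p_j z)^{\widetilde n}$, and then averaging over $\widetilde n$ using $\mathbb{E}[w^{\widetilde n}] = e^{n(w-1)}$ yields $\mathbb{E}[z^{Y_j}] = e^{np_j(z-1)}$, which is exactly the PGF of $Poi(np_j)$.

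Next, to obtain the joint claim I would compute the full joint PGF of $\sum_{i=1}^{\widetilde n} X_i$. Exploiting the coordinatewise independence of the $X_i$'s, which gives $\mathbb{E}[\prod_j z_j^{X_i(j)}] = \prod_j (1 - p_j + p_j z_j)$, together with the iid-ness across $i$, one obtains
\[
    \mathbb{E}\!\left[\prod_{j=1}^N z_j^{Y_j}\,\middle|\, \widetilde n\right] = \prod_{j=1}^N (1 - p_j + p_j z_j)^{\widetilde n}.
\]
The final step is to integrate this against the law of $\widetilde n \sim Poi(n)$ and compare the result to the target PGF $\prod_j e^{np_j(z_j - 1)}$ of $\bigotimes_{j=1}^N Poi(np_j)$.

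The main obstacle lies in extracting the product structure at the joint level: since all the coordinates $Y_j$ depend on the single Poisson sample size $\widetilde n$, they are \emph{a priori} coupled through it, and the delicate step is to show that once one averages over $\widetilde n \sim Poi(n)$, the coordinatewise factorization of the conditional PGF combines with the Poisson averaging to reproduce exactly the product form of independent Poisson PGFs. The proof therefore amounts to carefully tracking the interaction between the $\prod_j$ (coming from coordinate independence) and the exponential generating functional of $\widetilde n$.
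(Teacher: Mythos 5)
Your first paragraph is fine: each coordinate sum $Y_j$ is indeed $Poi(np_j)$ marginally, and the PGF verification you sketch for a single $j$ is correct. The problem is the step you defer to the end. Showing that averaging the conditional joint PGF over $\widetilde n$ ``reproduces exactly the product form of independent Poisson PGFs'' is not a delicate bookkeeping issue --- it is where the argument breaks. Carrying out your own plan: conditional on $\widetilde n$ the joint PGF is $\prod_j\big(1+p_j(z_j-1)\big)^{\widetilde n}$, and applying $\mathbb{E}[w^{\widetilde n}]=e^{n(w-1)}$ with $w=\prod_j\big(1+p_j(z_j-1)\big)$ yields $\exp\!\big(n\big[\prod_j(1+p_j(z_j-1))-1\big]\big)$, whereas the target is $\exp\!\big(n\sum_j p_j(z_j-1)\big)$. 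These differ by the cross terms (e.g.\ $n\,p_jp_k(z_j-1)(z_k-1)$), which do not vanish as soon as two of the $p_j$ are positive; equivalently $\mathrm{Cov}(Y_j,Y_k)=p_jp_k\,\mathrm{Var}(\widetilde n)=np_jp_k>0$, and in the extreme case $p_1=p_2=1$ one simply has $Y_1=Y_2=\widetilde n$. So the coordinate sums are \emph{not} jointly distributed as $\bigotimes_j Poi(np_j)$: because a single observation $X_i$ can contribute to several coordinates at once, the common Poisson sample size couples the coordinates after marginalizing over $\widetilde n$.

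This is exactly what separates the binomial setting from the multinomial one (Proposition~\ref{Poissonization_trick_mult}), where each $Z_i$ falls in exactly one cell and the classical Poisson marking/thinning theorem does give joint independence of the histogram counts. The paper offers no proof to compare against --- it declares both propositions classical --- but your PGF computation, pushed to completion, shows that the joint-independence reading of the statement cannot be established this way; what survives is either (i) the marginal claim $Y_j\sim Poi(np_j)$ for each $j$, which your first paragraph proves, or (ii) joint independence under a modified Poissonization in which each coordinate $j$ is thinned with its own independent sample size $\widetilde n_j\sim Poi(n)$. As written, your plan's ``main obstacle'' is not an obstacle to be tracked more carefully but the precise point at which the identity you hope to verify fails.
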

These two propositions are classical and follow from basic properties of the Poisson, Multinomial, and Binomial distributions. We rewrite them here only to provide some context on the equivalences that follow.

\hfill

Without loss of generality, assume that $p_1 \geq \cdots \geq p_N$. We consider the following settings:

\begin{enumerate}
    \item \textbf{Binomial case:} This is the setting considered above. We define $\mathcal{P}^{(Bin)} = \{Ber(p); \; p \in \R_+^N \}$ where by convention, $Ber(p) := \bigotimes_{j=1}^N Ber(p_j)$. We fix $p\in \mathcal{P}^{(Bin)}$ and suppose we observe $X_1, \cdots, X_n \overset{iid}{\sim} Ber(q)$ for $q \in \mathcal{P}^{(Bin)}$ unknown. We consider the \textbf{binomial} testing problem:
    \begin{align*}\label{testing_pb_binomial}
    H_0^{(Bin)}: q=p ~~\text{ vs }~~ H_1^{(Bin)}: \begin{cases}q\in \mathcal{P}^{(Bin)}; \\ \|q-p\|_t \geq \rho.\end{cases}  
    \end{align*}

    \item \textbf{Poisson case:} $\mathcal{P}^{(Poi)} = \{Poi(p); \; p \in \R_+^N \}$ where by convention, $ Poi(p) := \bigotimes_{j=1}^N Poi(p_j)$. We fix $p\in \mathcal{P}^{(Poi)}$ and suppose we observe $Y_1, \cdots, Y_n \overset{iid}{\sim} Poi(q)$ for $q \in \mathcal{P}^{(Poi)}$ unknown. We consider the \textbf{Poisson} testing problem:
    \begin{align*}
    H_0^{(Poi)}: q=p ~~\text{ vs }~~ H_1^{(Poi)}: \begin{cases}q\in \mathcal{P}^{(Poi)}; \\ \|q-p\|_t \geq \rho.\end{cases}  
    \end{align*}
    
    \item \textbf{Multinomial case} $\mathcal{P}^{(Mult)} = \big\{\mathcal{M}(p) \big| \; p \in \R_+^N, \sum_{j=1}^N p_j =1 \big\}$ where $ \mathcal{M}(p)$ denotes the multinomial distribution over $\{1,\dots, N\}$. We fix $p\in \mathcal{P}^{(Mult)}$ and suppose we observe $Z_1, \cdots, Z_n \overset{iid}{\sim} \mathcal{M}(q)$ for $q \in \mathcal{P}^{(Mult)}$ unknown. We consider the \textbf{Multinomial} testing problem:
    \begin{align*}
    H_0^{(Mult)}: q=p ~~\text{ vs }~~ H_1^{(Mult)}: \begin{cases}q\in \mathcal{P}^{(Mult)}; \\ \|q-p\|_{\mathcal{M},t} \geq \rho.\end{cases}  
    \end{align*}
where for $x = (x_1, \cdots, x_N)$: $\|x\|_{\mathcal{M},t} = \left[\sum_{j=2}^N |x_j|^t\right]^{1/t} $ is the multinomial norm, defined without taking the first coordinate into account. Indeed, because of the shape constraint $\sum p_j = 1$, the first coordinate does not bring any information and can be deduced from the $N-1$ coordinates.

\end{enumerate}
For these three testing problems, we define respectively $\rho_{Bin}^*(n,p,t,\eta),\\ \rho_{Poi}^*(n,p,t,\eta), \rho_{Mult}^*(n,p,t,\eta)$ for the minimax separation distances in the sense of Equation~\eqref{eq:sepdist}, for each of the testing problems.

We state the following statement regarding the equivalence between all models.
\begin{lemma}\label{bin_eq_poi}\normalfont{\textbf{(Equivalence between the Binomial and Poisson settings)}}
Let $t\in [1,2]$. There exist two absolute constants $c_{BP}, \; C_{BP} >0$ depending on $\eta$ such that $\forall p \in [0,1]^N, \; \forall n \geq 2 \; \eta>0, \;:$ 
$$ c_{BP}\;\rho^*_{Bin}(n,p, t,\eta) \leq \rho^*_{Poi}(n,p) \leq C_{BP}\;\rho^*_{Bin}(n,p, t,\eta).$$
\end{lemma}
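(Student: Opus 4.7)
The plan is to introduce an auxiliary Poissonized Binomial (PB) model: draw $\widetilde n \sim \mathrm{Poi}(n)$ and, conditionally on $\widetilde n$, sample $X_1,\ldots,X_{\widetilde n}$ iid from $\bigotimes_{j=1}^N \mathrm{Ber}(q_j)$, and denote its $\eta$-minimax separation radius by $\rho^*_{PB}(n,p,t,\eta)$. The first step is to show the exact equality $\rho^*_{Poi}(n,p,t,\eta) = \rho^*_{PB}(n,p,t,\eta)$. In the PB model, the coordinatewise sum $T = \sum_{i \leq \widetilde n} X_i$ is a sufficient statistic and, by Proposition~\ref{Poissonization_trick_bin}, follows $\bigotimes_j \mathrm{Poi}(nq_j)$, which is precisely the law of the sufficient statistic $\sum_{i=1}^n Y_i$ in the Poisson setting. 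Thus both testing problems reduce to the same problem in $T$ and the minimax rates coincide.

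The second step is to prove $\rho^*_{PB}(n,p,t,\eta) \asymp_\eta \rho^*_{Bin}(n,p,t,\eta)$ by two Poissonization reductions. From Bin to PB: starting from a Binomial test $\psi_B$ on $n/2$ samples with risk $\leq \eta$, I would construct a PB test at mean $n$ by observing $\widetilde n$, applying $\psi_B$ to the first $n/2$ Bernoullis whenever $\widetilde n \geq n/2$, and outputting $0$ otherwise; the Chernoff bound $\mathbb{P}(\mathrm{Poi}(n) < n/2) \leq e^{-cn}$ inflates the risk by at most $e^{-cn}$. For the reverse direction, given a PB test $\psi_{PB}$ at mean $n/2$, I would build a Binomial test on $n$ samples by drawing $\widetilde n \sim \mathrm{Poi}(n/2)$ independently of the data and, if $\widetilde n \leq n$, applying $\psi_{PB}$ to $(\widetilde n, X_1, \ldots, X_{\widetilde n})$ (else outputting $0$). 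Since the first $\widetilde n$ of $n$ iid Bernoulli samples are themselves iid $\mathrm{Ber}(q)$, the joint law reproduces the PB model conditionally on $\widetilde n \leq n$, an event of probability $\geq 1 - e^{-cn}$. Together these yield $\rho^*_{PB}(n, 2\eta) \lesssim \rho^*_{Bin}(n/2, \eta)$ and $\rho^*_{Bin}(n, 2\eta) \lesssim \rho^*_{PB}(n/2, \eta)$, up to the negligible $e^{-cn}$ error.

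To conclude, I would invoke a smoothness estimate $\rho^*_{Bin}(n/2,p,t,\eta/2) \leq C_\eta \, \rho^*_{Bin}(n,p,t,\eta)$, allowing one to reabsorb the factor-of-$2$ changes in $n$ and $\eta$ that the reductions introduce. I expect this smoothness step to be the main technical obstacle: monotonicity gives the trivial direction only, and the nontrivial inequality I would verify by inspecting the rate in Theorem~\ref{Rate}, where each of the three summands is polynomial in $n^{-1}$ with exponent depending only on $t$, so halving $n$ can only inflate the rate by a $t$-dependent multiplicative constant; similarly, the $n$-dependence of the threshold $I$ in~\eqref{def:def_I} can be checked to contribute only constants. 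A minor secondary issue is the regime of very small $n$ where $e^{-cn}$ fails to be small; there, both rates are bounded below by the $1/n$ floor appearing in Theorem~\ref{Rate}, and the equivalence holds with enlarged constants.
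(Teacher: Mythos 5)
Your Step~2 reductions between the Binomial model and the Poissonized--Binomial (PB) model are sound, and so is the observation that the Poisson sufficient statistic is a function of the PB data; together with the smoothness-in-$(n,\eta)$ step (which the paper also uses implicitly when it writes $\rho_{Bin}(cn,p,\eta/2)\asymp\rho_{Bin}(n,p,\eta)$, and which can be read off the explicit Binomial rate without circularity) these give the half $c_{BP}\,\rho^*_{Bin}\le \rho^*_{Poi}$. The genuine gap is in Step~1: in the PB model the statistic $T=\sum_{i\le \widetilde n}X_i$ is \emph{not} sufficient, because the observed sample size $\widetilde n$ is part of the data and $(\widetilde n,T)$ is the minimal sufficient pair. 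Concretely, the likelihood is $e^{-n}\tfrac{n^{\widetilde n}}{\widetilde n!}\prod_j q_j^{T_j}(1-q_j)^{\widetilde n-T_j}$, and the conditional law of $\widetilde n$ given $T$ depends on $q$: for $N=1$ and $T_1=0$ one gets $\widetilde n\mid T \sim \mathrm{Poi}\big(n(1-q_1)\big)$. Hence the claimed exact identity $\rho^*_{Poi}=\rho^*_{PB}$ is unjustified; what your argument actually yields is only $\rho^*_{PB}\le\rho^*_{Poi}$ (any Poisson-model test can be run on PB data by computing $T$), i.e.\ knowing the number of trials can only help. The inequality you are then missing is precisely the right-hand one, $\rho^*_{Poi}\le C_{BP}\,\rho^*_{Bin}$: it requires showing that from Poisson observations one can emulate a near-optimal Binomial test, i.e.\ that \emph{not} knowing the number of trials costs only constants, and this cannot be obtained by a sufficiency reduction.

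The paper closes this direction with a dedicated construction: from the Poisson counts it produces $\widetilde n\sim\mathrm{Poi}(n)$ Bernoulli vectors whose coordinatewise sum equals the observed counts, restricts to the event $\{\widetilde n\ge cn\}$ (probability at least $1-\eta/4$), and applies the Binomial test calibrated at sample size $\lfloor cn\rfloor$ and level $\eta/2$; the converse direction is the truncation argument you already have. If you wish to keep your architecture, you must replace the sufficiency claim by such a simulation step (taking care that the decomposition can be carried out without knowledge of $q$), or by an alternative route such as splitting each coordinate total $\mathrm{Poi}(nq_j)$ uniformly over $n$ cells, which yields iid $\mathrm{Ber}(1-e^{-q_j})$ samples but then requires an extra argument that the separation radii at $p$ and at $1-e^{-p}$ agree up to constants. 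Either way, this is an additional substantive step, not a consequence of your Step~1.
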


\vspace{3mm} 

\begin{lemma}\label{mult_eq_poi}\normalfont{\textbf{(Equivalence between Multinomial and Poisson settings)}}
Let $t\in [1,2]$. It holds that $\forall p \in [0,1]^N, \; \forall n \geq 2\; \eta>0$, if $\sum_{i=1}^N p_i = 1$: 
$$\;\rho^*_{Mult}(n,p,t,\eta) \lesssim_\eta \rho^*_{Poi}(n,p^{-\max}) \lesssim_\eta \;\rho^*_{Mult}(n,p,t,\eta)$$ where $p^{-\max} := (p_2, \cdots, p_N)$.
\end{lemma}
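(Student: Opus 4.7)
The plan is to establish both inequalities by direct reductions based on Proposition~\ref{Poissonization_trick_mult}. The key design feature is that the multinomial norm drops the first coordinate, giving $\|q-p\|_{\mathcal{M},t}=\|q^{-\max}-p^{-\max}\|_t$ for any probability vectors $p,q$; hence the null and separation conditions of the two problems coincide on coordinates $2,\dots,N$ and only the noise model differs.

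For the direction $\rho^*_{Mult}(n,p)\lesssim_\eta\rho^*_{Poi}(n,p^{-\max})$, I would convert multinomial samples into Poisson samples. Given $n$ iid multinomial samples $Z_1,\dots,Z_n\sim\mathcal{M}(q)$, draw an independent $\widetilde n\sim\mathrm{Poi}(n/2)$, which satisfies $\widetilde n\leq n$ with probability at least $1-e^{-cn}$, and on that event form the histogram $(H_1,\dots,H_N)$ of the first $\widetilde n$ samples. Proposition~\ref{Poissonization_trick_mult} gives mutually independent $H_j\sim\mathrm{Poi}((n/2)q_j)$. Discarding $H_1$, the remaining vector $(H_2,\dots,H_N)$ has the exact distribution of the aggregate of $n/2$ iid samples from $\bigotimes_{j\ge 2}\mathrm{Poi}(q^{-\max}_j)$, which is admissible input for a Poisson test $\psi_{Poi}$ tailored for $p^{-\max}$. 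Since the separation conditions transfer verbatim, $\psi_{Poi}$ yields a multinomial test with the same risk up to the negligible failure term.

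For the reverse direction $\rho^*_{Poi}(n,p^{-\max})\lesssim_\eta\rho^*_{Mult}(n,p)$, I would convert Poisson data into multinomial data by simulating the missing first coordinate. Given Poisson samples $Y_1,\dots,Y_n\sim\bigotimes_{j\ge 2}\mathrm{Poi}(q^{-\max}_j)$, aggregate into $\bar Y=\sum_i Y_i$ and draw an independent $H_1\sim\mathrm{Poi}(np_1)$, which is possible because $p_1$ is known. Setting $H=(H_1,\bar Y_2,\dots,\bar Y_N)$, $\tilde q=(p_1,q^{-\max})$, $S=\|\tilde q\|_1$, and $M=\sum_j H_j\sim\mathrm{Poi}(nS)$, the converse of the Poissonization trick yields that, conditional on $M$, the vector $H$ is distributed as the histogram of $M$ iid draws from $\mathcal{M}(\tilde q/S)$. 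On the event $\{M\geq n/2\}$, unpacking $H$ into a random ordering and keeping the first $n/2$ labels produces $n/2$ iid multinomial samples to feed into a multinomial test $\psi_{Mult}$ for $p$.

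\textbf{Main obstacle.} The delicate point is that under the Poisson alternative, $S$ need not equal $1$, so the effective multinomial parameter $\tilde q/S$ differs from $\tilde q$. The triangle inequality gives
\begin{equation*}
    \|\tilde q/S - p\|_{\mathcal{M},t} \geq \frac{1}{S}\Big(\|q^{-\max}-p^{-\max}\|_t - |S-1|\cdot\|p^{-\max}\|_t\Big),
\end{equation*}
which I would exploit via a case split. If $|S-1|\leq\rho/2$, then, using $\|p^{-\max}\|_t\leq 1$ and $S\leq 2$, the right-hand side remains of order $\rho$ and $\psi_{Mult}$ rejects with high probability. Otherwise $|S-1|>\rho/2$ pushes $M\sim\mathrm{Poi}(nS)$ away from its null mean $n$ enough to be detected by a simple threshold test on $|M-n|$ controlled via Poisson tail bounds. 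Combining these two sub-tests (reject if either fires) and absorbing constants into $\lesssim_\eta$ closes the argument.
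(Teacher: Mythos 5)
The first direction ($\rho^*_{Mult}\lesssim_\eta\rho^*_{Poi}$) of your proposal is essentially the paper's argument: Poissonize the multinomial sample, form the histogram, drop the first coordinate, and feed the result to the Poisson test on the high-probability event that the Poissonized sample size does not exceed $n$.

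For the reverse direction you take a genuinely different route: the paper does \emph{not} reduce Poisson data to multinomial data; it transports the Poisson lower-bound priors (bulk perturbation and sparse tail prior, with $p_1$ untouched) onto the simplex by normalizing $p'=\widetilde p/\|\widetilde p\|_1$, and shows via its Lemma~\ref{A3_high_prob} that the normalizing factor is $1+O(1/\sqrt n+1/n)$ with high probability, so that both the separation and the indistinguishability survive the projection. Your reduction (simulate $H_1\sim \mathrm{Poi}(np_1)$, condition on the total $M$ to get $\mathcal M(\tilde q/S)$ samples, then run the multinomial test plus a test on $M$) is a legitimate alternative strategy, but as written it has a genuine gap in the case analysis. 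The branch ``$|S-1|>\rho/2$ is detected by thresholding $|M-n|$'' is false in general: $M\sim\mathrm{Poi}(nS)$ has fluctuations of order $\sqrt n$ under the null, so the $M$-test only has power when $|S-1|\gtrsim_\eta 1/\sqrt n$, whereas $\rho\asymp_\eta\rho^*_{Mult}(n,p,t)$ can be as small as $1/n$ (and is typically $\ll 1/\sqrt n$). Hence the regime $\rho/2<|S-1|\lesssim 1/\sqrt n$ is nonempty and neither of your two sub-arguments covers it. The natural repair is to split at $|S-1|\asymp_\eta 1/\sqrt n$ instead, and in the small-$|S-1|$ branch absorb the normalization error $|S-1|\,\|p^{-\max}\|_t$ into the separation; but this requires the inequality $\|p^{-\max}\|_t/\sqrt n\lesssim_\eta\rho^*_{Mult}(n,p,t)$, which you neither state nor prove. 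It is true (it can be checked from the rate formula, or proved directly by a two-point ``mass-shift'' prior $q_1=p_1+\epsilon\|p^{-\max}\|_1$, $q_j=(1-\epsilon)p_j$ for $j\geq2$ with $\epsilon\asymp 1/\sqrt{n\|p^{-\max}\|_1}$), but invoking the multinomial rate formula here would be circular, since the present lemma is precisely what yields that formula; so this step needs its own lower-bound argument. The paper's prior-transfer proof faces the same $1+O(1/\sqrt n)$ rescaling issue and resolves it on the lower-bound side, which is why it avoids your obstacle; your route, once the case split is repaired and the missing inequality supplied, would give a slightly more operational statement (an explicit Poisson test built from a multinomial test), at the price of this extra lemma.
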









\hfill

This entails the following corollary regarding the minimax rates of testing in the multinomial model:

\revision{\begin{corollary}\label{cor:multpois}
Let $t\in [1,2]$. The minimax separation radii in the Poisson and multinomial cases are respectively given by:
\begin{align*}
    \rho_{Poi}^*(n,p,t,\eta) &\asymp_\eta \sqrt{\frac{\norm{p_{\leq I}}_r}{n}} + \frac{\norm{p_{>I}}_1^{\frac{2-t}{t}}}{n^\frac{2t-2}{t}}  + \frac{1}{n}  ~~~~~~~ \text{ for } p\in \mathcal{P}^{(Poi)}\\
    \rho_{Mult}^*(n,p,t,\eta) &\asymp_\eta \sqrt{\frac{\norm{p^{-\max}_{\leq I}}_r}{n}} + \frac{\norm{p_{>I}}_1^{\frac{2-t}{t}}}{n^\frac{2t-2}{t}}  + \frac{1}{n} ~~~~ \text{ for } p\in \mathcal{P}^{(Mult)},
\end{align*}
where we recall that $I=I(n,p,t)$.
\end{corollary}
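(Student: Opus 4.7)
The plan is to derive both rates as immediate corollaries of Theorem~\ref{Rate}, combined with the equivalence Lemmas~\ref{bin_eq_poi} and~\ref{mult_eq_poi}. No new test construction or lower bound argument is required at this stage: everything reduces to transporting the explicit Binomial rate to the Poisson setting, then to the Multinomial setting.

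For the Poisson rate, I would fix $p\in\mathcal{P}^{(Poi)}$ and invoke Lemma~\ref{bin_eq_poi} to write
\[
\rho^*_{Poi}(n,p,t,\eta)\;\asymp_{\eta}\;\rho^*_{Bin}(n,p,t,\eta),
\]
and then substitute the explicit formula from Theorem~\ref{Rate}. The only technical subtlety is that Theorem~\ref{Rate} is phrased for $p\in\mathcal{P}_N=[0,1]^N$, whereas $\mathcal{P}^{(Poi)}$ formally allows entries larger than one. To handle this, I would use the infinite divisibility of the Poisson law: each coordinate $Poi(np_j)$ splits into $n$ independent copies of $Poi(p_j)$, which for bounded parameters couples (up to a multiplicative constant in the rate) with $\mathrm{Ber}(p_j)$ through a standard Le Cam/total-variation argument; for large parameters, one rescales $n$ so as to bring $p_j$ back into $[0,1]$ and checks that the resulting expression still matches the claimed form.

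For the Multinomial rate, Lemma~\ref{mult_eq_poi} gives
\[
\rho^*_{Mult}(n,p,t,\eta)\;\asymp_{\eta}\;\rho^*_{Poi}\!\bigl(n,\,p^{-\max},\,t,\,\eta\bigr),
\]
so it is enough to feed $p^{-\max}=(p_2,\dots,p_N)$ into the Poisson rate derived in the first step. This directly yields the stated expression, with $p^{-\max}_{\leq I}$ appearing in the bulk term and $p_{>I}$ in the tail term (these being identical as sub-vectors once $p$ is sorted in decreasing order and $I\geq 1$).

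The step I expect to require the most care is reconciling the cut-off index $I=I(n,p,t)$ across the two formulations. Substituting $p^{-\max}$ into the Poisson rate literally produces $I(n,p^{-\max},t)$ rather than $I(n,p,t)$; since $p^{-\max}$ simply deletes the largest entry $p_1$, the two indices differ by at most one and the mismatch concerns at most the single squared entry $p_1^2$. A short monotonicity argument on $J\mapsto\sum_{i>J}p_i^2$, together with the observation that removing a single coordinate shifts both $\|p_{\leq I}\|_r$ and $\|p_{>I}\|_1$ by a factor controlled by $\eta$, absorbs this discrepancy into the $\asymp_\eta$ constants and closes the proof.
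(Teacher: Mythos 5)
Your proposal follows the paper's route exactly: the corollary is presented there as a direct consequence of Theorem~\ref{Rate} together with Lemmas~\ref{bin_eq_poi} and~\ref{mult_eq_poi}, with no separate proof, so combining the Binomial rate with the two equivalences is precisely what is intended (and your extra care about Poisson parameters outside $[0,1]$ is more than the paper itself provides). One small correction to your index-reconciliation step: deleting $p_1$ merely shifts labels, so $I(n,p^{-\max},t)=I(n,p,t)-1$ and the tail terms coincide exactly after relabeling; the only entry possibly at stake is the boundary entry $p_{I+1}\leq \sqrt{c_I}/n$ (by the definition of $I$), which is absorbed by the $1/n$ term --- it is not $p_1$, whose removal from the bulk norm could in general not be absorbed into $\eta$-dependent constants.
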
}
Note that the upper bounds in the Poisson model are obtained using our tests on the Poisson vector, and the upper bounds in the Multinomial model are obtained using our tests on the last $N-1$ coordinates of the estimates of probabilities of each categories.

\section{Discussion}\label{Discussion}









%

In this entire section, we mostly discuss the Multinomial setting - whose rates are given in Corollary~\ref{cor:multpois} - which is the most studied setting in the literature. To alleviate notations, we will write $ \rho^*(n,p)$ for the minimax separation distance in the Multinomial model, dropping the dependence on $\eta$. 

\subsection{Locality of the results}

In the present paper, we derive sharp local minimax rates of testing in the binomial, Poisson and multinomial settings. The locality property is a major aspect of the results: for each fixed $p$ 
we identify the detection threshold \textit{associated to $p$}, where $p$ is allowed to be any distribution in the class. 
For related local results in the case of the $\ell_1$ or $\ell_2$ norm, see e.g. \cite{valiant2017automatic}, \cite{diakonikolas2016new}, \cite{balakrishnan2019hypothesis} \cite{berrett2020locally}. This approach is less standard than the usual \textit{global} approach, which consists in finding the largest detection threshold in the class, i.e.~for the \textit{worst case} of $p$ - see e.g.  \cite{ingster1984asymptotically} (in Russian), \cite{ingster1986minimax}, \cite{ermakov1995minimax}, \cite{gine2016mathematical}, \cite{paninski2008coincidence}. 
Yet, local results can substantially improve global results: for instance, in the multinomial case and for the $\ell_2$ norm, the global separation radius for an $N$-dimensional multinomial is classically $N^{-1/4}/\sqrt{n}$, and is reached in the case where $p$ is uniform distribution. 
However, if $p = (1,0, \dots, 0)$ is a Dirac multinomial, then from our results the rate of testing in $\ell_2$ norm is $\frac{1}{n}$, hence much faster than the global rate. 
Even for fixed $N$, one can actually find a sequence of null distributions $p^{(n)}$ whose associated separation distance $\rho^*_{Mult}(n,p^{(n)},2,\eta)$ reaches any rate $1/n^\alpha$ for any $1/2 \leq \alpha \leq 1$ 
This consequently improves the global rate even for less extreme discrete distributions than Dirac multinomials. To give an example, consider an exponentially decreasing multinomial distribution
$p^{(n)} = \big(\frac{c}{n^{(2\alpha -1)j}}\big)_{j=1}^N$ for the renormalizing constant $c = n^{2\alpha-1} \frac{1-1/n^{2\alpha-1}}{1-1/n^{(2\alpha-1)N}} \asymp n^{2\alpha - 1}.$
Then, evaluating the local rate in $\ell_2$ (allowing us to consider the whole set of coefficients as the bulk, see Section \ref{Influence_of_the_norm} below), we get:
$$ \rho_{Mult}^*(n,p^{(n)}, 2,\eta) \asymp_\eta \sqrt{ \frac{\left\|p^{-\max}\right\|_2}{n}} + \frac{1}{n} \asymp_\eta \frac{1}{n^\alpha}.$$

\subsection{Comparison with existing literature in the multinomial case}\label{ss:valiant}

Our results are quite related to those of \cite{valiant2017automatic}, which examines the multinomial testing problem for the $\ell_1$ distance and in terms of sample complexity. More precisely, for a fixed $N$-dimensional multinomial distribution $p$, and for a fixed separation $\rho$, this work investigates the smallest number $n^*(p, \rho)$ of samples $X_1, \cdots, X_n \overset{iid}{\sim} \mathcal{M}(p)$ needed to ensure that the Multinomial  testing problem introduced in Section~\ref{ss:mult} has a minimax risk less than $2/3$, for a fixed separation distance $\rho>0$.  Formally this is defined as $n^*(p,\rho) = \min \left\{n \in \mathbb N: R^*_{\rho, p, t, n} \leq 2/3\right\}$ where $R^*_{\rho, p, t, n}$ denotes here the minimax risk for the multinomial problem\footnote{See Equation~\eqref{risk} for the definition of this quantity in the graph problem.}. Note that the quantities $n^*$ and $\rho^*$ are dual, for $\eta = 2/3$.\\

\cite{valiant2017automatic} proves the following bounds to characterize the optimal sample complexity $n^*(p,\epsilon)$ when given a fixed $\epsilon>0$:

$$ \frac{1}{\epsilon} + \frac{\norm{p_{-\epsilon}^{-\max}}_{2/3}}{\epsilon} \;\;\lesssim\;\; n^*(p,\epsilon) \;\;\lesssim\;\;\frac{1}{\epsilon} + \frac{\norm{p_{-\epsilon/16}^{-\max}}_{2/3}}{\epsilon}.$$
In the above bound, $p = (p_1, \cdots, p_N)$ where $p_1 \geq \cdots \geq p_N \geq 0$ and $\sum_{i=1}^n p_i = 1$. For $\epsilon > 0$, let $J$ be the smallest index such that $\sum_{i>J} p_i \leq \epsilon$. The notation $p_{-\epsilon}^{-\max}$ denotes $(p_2, \dots, p_J)$. \\

We generalize the result in several respects:
\begin{itemize}
    \item We consider the whole range of $\ell_t$ distances for $t$ in the segment $[1,2]$ and characterize the \textbf{\textit{local}} rates of testing in each case,
    \item We generalize the multinomial case to the graph case (binomial case) and to the Poisson setting, through the Poissonization trick.
\end{itemize}

\revision{In Appendix~\ref{sec:Wasserman_tight}, we justify that the upper and lower bounds from~\cite{valiant2017automatic}, when translated in terms of separation radius as in~\cite{balakrishnan2019hypothesis} actually match in the multinomial case, although claimed otherwise by the authors of~\cite{balakrishnan2019hypothesis} themselves. It was therefore unclear in the literature so far that matching upper and lower bounds on the critical radius were actually known in the case $t=1$.}
All of these cases involve the following ideas. The distribution can be split into bulk (set of large coefficients, with a subgaussian phenomenon) and  tail (set of small coefficients, with a subpoissonian phenomenon). To the best of our knowledge, the way we define the tail is new. It allows us to establish a clear cut-off between these two optimal sets, fundamentally differing through the behavior of the second order moment of $p$.\\ 

\revision{
The present paper can be linked with~\cite{blais2019distribution}, which considers instance optimal identity testing. Specifically, \cite{blais2019distribution} obtains a different characterization of the sample complexity for the case $t=1$, in terms of a fundamental quantity in the theory of interpolation of Banach spaces, known
as Peetre’s $K$-functional. This functional is defined for all $u>0$ as
$$ \kappa_p(u) = \inf_{p' + p'' = p} \|p'\|_1 + u \|p''\|_2.$$
This paper proves that for fixed $\epsilon \in (0,1)$, any test for testing identity to $p$ needs at least $\Omega(\kappa_p^{-1})(1-2\epsilon)$ samples in order to have a risk less than $\eta$. 
In Section 6.3, especially equation $(14)$ this paper discusses the non-tightness of~\cite{valiant2017automatic}. 
Note that their bound is not optimal either, but is incomparable to~\cite{valiant2017automatic}. 
This paper also provides a testing algorithm considering separately tail and heavy elements of the distribution, as well as a lower bound that uses interpolation theory to divide the problem into two types of elements - the $\ell_1$ contribution (heavy elements) and the $\ell_2$ ones (uniform-like).}\\

\revision{Building on this work, ~\cite{9211522} Appendix D: provides a general reduction scheme showing how to perform instance-optimal one-sample testing, given a "regular" (non-instance optimal) one-sample testing algorithm (even only for uniformity testing). This applies in particular to local privacy, or testing under communication constraints, or even without constraints at all.
}

\section{Lower bounds}\label{sec:Lower_bounds}
We recall the definitions of $r$ and $b$ in equation~\eqref{def_r_b}.
In what follows, index $A$ is defined as 
\begin{equation}\label{def_A}
    A= A_{P,t,n}(\eta) := \max\Bigg\{ a \leq I: p_a^{b/2} \geq \frac{\cA}{\sqrt{n}\big(\sum\limits_{i\leq I} p_i^r \big)^\frac{1}{4}} \Bigg\},
\end{equation}
where $\cA>0$ is a small enough constant depending only on $\eta$. \revision{We adopt the convention that $\max \emptyset = -\infty$ and that $p_{\leq -\infty} = \emptyset$ and $p_{>-\infty} = p$.} We start by presenting the lower bound part of Theorem \ref{Rate}. We divide the analysis into two parts: a lower bound for the large coefficients of $p$ (bulk) and  a lower bound for the small coefficients of $p$ (tail). The bulk will be defined as the set $p_{\leq A}$ and the tail as $p_{>A}$.

\subsubsection{Lower bound for the bulk}

To prove the lower bound, we identify a radius $\rho$ such that, if the $\ell_t$ distance between $\mathcal{H}_0$ and $\mathcal{H}_1$ is less than $\rho$, then any test has risk at least $\eta$. Therefore, by definition of $\rho^*$, $\rho$ is necessarily a lower bound on $\rho^*$. 
\begin{proposition}\label{LB:Bulk} 

Let $t \in [1,2]$. There exists a constant $\uc'>0$ depending only on $\eta$, as well as a distribution $q$ such that for any test $\psi$ we have
$$\big\|(q-p)_{\leq A}\big\|_t \geq \uc'\left( \frac{\left\|p_{\leq A}\right\|_r^{r/t}}{\sqrt{n}\,\left\|p_{\leq I}\right\|_r^{r/4}} + \frac{1}{n}\right),$$
and 
$$\mathbb P_p(\psi = 1) + \mathbb P_q(\psi = 0) \geq \eta.$$
\end{proposition}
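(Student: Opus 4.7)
My plan is to apply the classical Le Cam / chi-squared method: construct a prior on $\mathcal{H}_1$ whose mixture has bounded chi-squared divergence against $P_p^{\otimes n}$, which via the standard $TV \le \tfrac12\sqrt{\chi^2}$ inequality prevents any test from achieving testing risk less than $1-\eta$. The two summands in the claimed lower bound are treated by separate constructions, and the statement (with a single $q$) follows from the fact that $\mathrm{term}_1 + \mathrm{term}_2 \le 2\max(\mathrm{term}_1, \mathrm{term}_2)$, so picking whichever construction yields the larger perturbation suffices (up to a constant absorbed in $\underline{c}'$).

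For the first term $\|p_{\leq A}\|_r^{r/t}/(\sqrt{n}\|p_{\leq I}\|_r^{r/4})$, I would take a random-sign perturbation of the bulk coordinates: let $\xi_1,\dots,\xi_A$ be i.i.d.\ Rademacher, and set $q_i = p_i + \xi_i\, c\, p_i^{r/t}$ for $i \leq A$ and $q_i = p_i$ for $i > A$, with $c = c'_A/(\sqrt{n}\,\|p_{\leq I}\|_r^{r/4})$ and $c'_A$ a small constant depending only on $\eta$. The defining property of $A$, namely $p_A^{b/2} \geq c_A/(\sqrt{n}\,\|p_{\leq I}\|_r^{r/4})$, combined with the algebraic identity $1 - r/t = b/2$, ensures $c\,p_i^{r/t} \leq p_i$ for every $i \leq A$, so each realization of $q$ remains a valid Bernoulli parameter vector. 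By construction $\|(q-p)_{\leq A}\|_t = c\,\|p_{\leq A}\|_r^{r/t}$, which after plugging in $c$ matches the first claimed term. The indistinguishability part rests on the coordinate-wise factorization of the second moment of the likelihood ratio against the product Bernoulli distribution, yielding
\[
1 + \chi^2(\bar Q, P_p^{\otimes n}) \;=\; \prod_{i\leq A} \mathbb{E}_{\xi_i,\xi_i'} \exp\!\Bigl(n\,\xi_i\xi_i' \tfrac{(c p_i^{r/t})^2}{p_i(1-p_i)}\Bigr) \;=\; \prod_{i\leq A}\cosh\!\Bigl(\tfrac{n c^2 p_i^{2r/t}}{p_i(1-p_i)}\Bigr),
\]
which, using $p_i \leq 1/2$ and $\cosh(x) \leq \exp(x^2/2)$, is bounded by $\exp\bigl(C\sum_{i\leq A} n^2 c^4 p_i^{4r/t - 2}\bigr)$. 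The key identity $4r/t - 2 = r$ then reduces the exponent to $C\,n^2 c^4 \|p_{\leq A}\|_r^r \leq C\,n^2 c^4 \|p_{\leq I}\|_r^r = C(c'_A)^4$, which can be made smaller than $\eta^2$ by picking $c'_A$ small enough. Standard $TV$/$\chi^2$ bounds then give the testing lower bound, and a realization $\xi^*$ such that the single pair $(p, q(\xi^*))$ is indistinguishable is extracted by an averaging argument.

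For the $1/n$ term, I would use a rank-one perturbation: pick a bulk coordinate $p_1$ (which exists whenever $A \geq 1$) and consider $q_1 = p_1 + c_\eta/n$ (or $\pm$ a shift shared between two bulk coordinates in the multinomial normalization). The chi-squared of $n$ Bernoulli observations against this alternative is $O\!\bigl(n\cdot (c_\eta/n)^2/(p_1(1-p_1))\bigr) = O(c_\eta^2/(np_1))$; when $p_1 \gtrsim 1/n$ this is controlled by $c_\eta^2$, while if $np_1 \lesssim 1$ we instead compare the two Poisson intensities $np_1$ and $np_1 + c_\eta$, which have bounded total variation for small $c_\eta$. The main obstacle is precisely the chi-squared computation for the first term and verifying that the two threshold conditions — (i) $c\,p_i^{r/t} \leq p_i$ on the bulk and (ii) the chi-squared is $\leq 4\eta^2$ — are simultaneously achievable; this is engineered by the definition of $A$, which is chosen so that (i) is exactly the positivity constraint and (ii) saturates the mixture second-moment budget. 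The remaining steps (handling the $(1-p_i)$ factor via $p_i \leq 1/2$, and combining the two constructions by taking the larger one) are routine.
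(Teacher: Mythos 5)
Your construction is essentially the paper's own proof: the same Rademacher-sign prior $q_i = p_i \pm c\, p_i^{2/(4-t)}$ on the coordinates $i \leq A$ with $c \asymp 1/(\sqrt{n}\,\|p_{\leq I}\|_r^{r/4})$, validity of the Bernoulli parameters guaranteed by the definition of $A$ (via $1-r/t=b/2$), the same factorized chi-squared/$\cosh$ computation reducing matters to $\sum_{i\leq A}\gamma_i^4/p_i^2 \lesssim 1/n^2$ through the identity $4r/t-2=r$, and a single-coordinate shift of order $1/n$ for the second term. The only deviations are harmless: the paper disposes of the $1/n$ term directly by total-variation subadditivity, $d_{TV}(\mathbb P_p,\mathbb P_q) \leq n\,|q_1-p_1| = 1-\eta$, with no need for your chi-squared/Poisson case split, and your displayed equality for the mixture second moment is really the upper bound $(1+x)^n \leq e^{nx}$, which is exactly the estimate the paper uses.
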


This implies that $\rho = \frac{\left\|p_{\leq A}\right\|_r^{r/t}}{\sqrt{n}\,\left\|p_{\leq I}\right\|_r^{r/4}} + \frac{1}{n}$ is a lower bound on the minimax separation radius $\rho^*$. \\

Note that the lower bound in $\frac{1}{n}$ is trivial since changing any entry of $p$ by $\frac{1}{n}$ is not detectable with high probability. Now let us examine the first part of the rate. To prove this lower bound, we use Le Cam's two points method by defining a prior distribution over a discrete subset of $\mathcal{P}_N$ satisfying $\mathcal{H}_1$. More precisely, for all $\left(\delta_1, \cdots, \delta_A\right) \in \left\{\pm 1 \right\}^A$ we define the distribution $q_\delta$ such that:
\begin{equation}\label{Prior_bulk}
    \left(q_\delta\right)_j =\left\{\begin{array}{ll}
      p_j + \delta_i \gamma_j & \text{if } j\leq A \\
     p_j & \text{ otherwise,}  
\end{array} \right. 
\end{equation}
where, for some small enough constant $\cgamma>0$ depending only on $\eta$:
\begin{equation}\label{gamma}
\gamma_i = \frac{\cgamma \, p_i^\frac{2}{4-t}}{\sqrt{n} \left(\sum_{i \leq I} p_i^r\right)^\frac{1}{4}}.   
\end{equation}

\revision{The mixture $$ \bar{\mathbb P}_{\mathrm{bulk}} = \frac{1}{2^A} \sum_{\delta \in \{\pm 1\}^A} q_\delta^{\otimes n}$$
defines a probability distribution over the set of observations $X_1,\dots X_n$, such that, conditional on $\delta \in \{\pm 1\}^A$, the observations are iid with probability distribution $q_\delta$.}\\

The core of the proof is to prove that observations $X_1,\dots, X_n$ drawn from this mixture distribution $\bar{\mathbb P}_{\mathrm{bulk}}$ are so difficult to distinguish from observations $X'_1,\dots X'_n$ drawn from $\mathbb P_p$, that the risk of any test is necessarily larger than $\eta$. 
This brings us to the conclusion of our proposition since any distribution $q_\delta$ is separated away from $p$ by an $\ell_t$ distance equal to $\left(\sum_{i=1}^A \gamma_i^t\right)^\frac{1}{t} \asymp \frac{\left\|p_{\leq A}\right\|_r^{r/t}}{\sqrt{n}\left\|p_{\leq I}\right\|_r^{r/4}}$. Therefore, $\frac{\left\|p_{\leq A}\right\|_r^{r/t}}{\sqrt{n}\left\|p_{\leq I}\right\|_r^{r/4}}$ is necessarily a lower bound on the separation radius $\rho^*$
This lower bound is an extension to the case where $t\in [1,2]$ of the lower bound in~\cite{valiant2017automatic} which is given for the case $t=1$, up to some issues that are discussed in details in Subsection~\ref{ss:valiant}.

\subsubsection{Lower bound for the tail}

We now derive a lower bound for the tail $p_{>A}$, containing the smallest coefficients of $p$. The tail lower bound involves very different phenomena compared to the above bulk lower bound. The reason is that the definition of $A$ implies that on the tail, \textit{whp}, no same coordinate is observed twice or more among the $n$ data.

\begin{proposition}\label{LBtail} 

Let $t \in [1,2]$, and consider any test $\psi$. There exists a constant $\uc'>0$ depending only on $\eta$ and a distribution $Q$ such that 
$$\left\|(q-p)_{> A}\right\|_t \geq \uc' \frac{\left\|p_{>I}\right\|_1^{\frac{2-t}{t}}}{n^\frac{2t-2}{t}} ,$$
and 
$$\mathbb P_p(\psi = 1) + \mathbb P_q(\psi = 0) \geq \eta.$$
\end{proposition}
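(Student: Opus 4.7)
My plan is to apply Le Cam's two-point method with a Rademacher mixture prior, transferred to the Poisson model by Lemma~\ref{bin_eq_poi} so that under $\mathbb{P}_p$ the coordinates $Y_j \sim \mathrm{Poi}(np_j)$ are mutually independent. Fix a subset $S \subseteq \{A+1,\ldots,N\}$ and weights $\gamma_i \in (0,p_i]$ (to be chosen); draw $\delta \in \{\pm 1\}^S$ with iid Rademacher entries and set $q(\delta)_i = p_i + \delta_i\gamma_i$ for $i\in S$ and $q(\delta)_i = p_i$ otherwise. Every realisation of $q$ satisfies $\|(q-p)_{>A}\|_t = (\sum_{i\in S}\gamma_i^t)^{1/t}$, so via Le Cam's inequality combined with $\mathrm{TV} \leq \tfrac12\sqrt{\chi^2}$, it suffices to exhibit $(S,\gamma)$ for which the chi-squared between the mixture $\bar{\mathbb{P}}_\pi$ and $\mathbb{P}_p$ is bounded by a function of $\eta$ and $(\sum_{i\in S}\gamma_i^t)^{1/t}$ matches the target rate.

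Independence together with the identity $\mathbb{E}_{Y\sim\mathrm{Poi}(\mu)}\bigl[\mathrm{Poi}(\lambda)(Y)\mathrm{Poi}(\lambda')(Y)/\mathrm{Poi}(\mu)(Y)^2\bigr] = e^{(\lambda-\mu)(\lambda'-\mu)/\mu}$ factorises the chi-squared as
\begin{align*}
    \chi^2(\bar{\mathbb{P}}_\pi,\mathbb{P}_p) + 1 \;=\; \prod_{i\in S}\cosh(n\gamma_i^2/p_i) \;\leq\; \exp\!\Bigl(\tfrac12\sum_{i\in S} n^2\gamma_i^4/p_i^2\Bigr),
\end{align*}
so the problem reduces to maximising $(\sum\gamma_i^t)^{1/t}$ under the budget $\sum_{i\in S}n^2\gamma_i^4/p_i^2 \leq c(\eta)$ together with validity $\gamma_i \leq p_i$.

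The Lagrangian optimum $\gamma_i \propto p_i^{2/(4-t)}$ is infeasible on $\{i>A\}$: the definition~\eqref{def_A} of $A$ forces $p_i^{b/2} < c_A/(\sqrt n\,\|p_{\leq I}\|_r^{1/4})$ for $i > A$, which makes the interior optimum exceed $p_i$. Validity therefore binds, and I pick $\gamma_i \equiv \bar\gamma = \bar p$ constant on a level set $S \subseteq \{i > A : p_i \asymp \bar p\}$ of cardinality $M$. The chi-squared constraint reduces to $M n^2 \bar p^2 \lesssim 1$, which pins $\bar p \asymp 1/(n\sqrt M)$ and yields $(\sum_{i\in S}\gamma_i^t)^{1/t} \asymp M^{1/t-1/2}/n$; the choice $M \asymp n^2\|p_{>I}\|_1^2$ (equivalently $\bar p \asymp 1/(n^2\|p_{>I}\|_1)$) produces the claimed bound, using $(2-t)/t = 2/t-1$ and $(2t-2)/t = 2-2/t$.

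The main obstacle is verifying \emph{feasibility} of this level set, i.e., producing $M \asymp n^2\|p_{>I}\|_1^2$ coordinates in $\{i>A\}$ with $p_i \gtrsim 1/(n^2\|p_{>I}\|_1)$. Cauchy-Schwarz applied to $\sum_{i>I}p_i^2 \leq c_I/n^2$ gives $|\{i>I\}| \geq n^2\|p_{>I}\|_1^2/c_I$, providing a large enough pool of tail indices; the threshold $p_{A+1}^{b/2} < c_A/(\sqrt n\,\|p_{\leq I}\|_r^{1/4})$ from the definition of $A$ lets me check $p_{A+1} \gtrsim \bar p$, ensuring that enough coordinates in $\{i>A\}$ actually reach level $\bar p$. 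Degenerate regimes (in particular $\|p_{>I}\|_1$ so small that $M<1$) sit below the trivial $1/n$ lower bound and are absorbed into the corresponding term of Theorem~\ref{Rate}. Tuning $c_A$, $c_I$ and $c(\eta)$ small enough, Le Cam's bound $1 - \tfrac12\sqrt{\chi^2} \geq \eta$ then delivers the claimed risk lower bound.
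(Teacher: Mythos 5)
There is a genuine gap, and it sits exactly where the difficulty of the tail regime lies. Your construction keeps the symmetric Rademacher perturbation $q_i=p_i\pm\gamma_i$ with the validity constraint $\gamma_i\le p_i$, and its feasibility step requires $M\asymp n^2\|p_{>I}\|_1^2$ coordinates beyond $A$ with $p_i\gtrsim \bar p\asymp 1/(n^2\|p_{>I}\|_1)$. This is false for a generic null. Take a homogeneous tail: $p_i=\epsilon$ for $m$ coordinates past $I$ with $n^2m\epsilon^2\ll 1$ and $m\epsilon\ge 1/n$; then $\bar p\asymp \epsilon/(n^2m\epsilon^2)\gg\epsilon$, so \emph{no} tail coordinate reaches level $\bar p$, and the best your prior can do (taking $\gamma_i=p_i$ everywhere, the $\chi^2$ budget being slack) is a separation $\|p_{>A}\|_t=m^{1/t}\epsilon$, which falls short of the target $\|p_{>I}\|_1^{(2-t)/t}n^{-(2t-2)/t}$ by the factor $(n^2m\epsilon^2)^{-(t-1)/t}$; e.g.\ with $p_i=n^{-3}$, $m=n^{5/2}$, $t=2$ you achieve $\asymp n^{-7/4}$ against a target $\asymp n^{-1}$. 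The specific justification you give is also reversed: the definition of $A$ in \eqref{def_A} yields an \emph{upper} bound on $p_{A+1}^{b/2}$, not a lower bound, so it cannot certify $p_{A+1}\gtrsim\bar p$; and since $p$ is sorted decreasingly, even a lower bound on $p_{A+1}$ would not produce $M$ coordinates at that level. In short, for $t>1$ the mean-bounded symmetric prior is structurally unable to attain the tail rate, which is precisely why this case is harder than the bulk.

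The paper's proof (Lemma \ref{tail}) uses a different mechanism that you would need to adopt: an \emph{asymmetric, mean-matched sparse} prior. Past an index $U$ with $n^2p_U\|p_{\geq U}\|_1\le c_u$, it sets $q_i=b_i\bar\pi$ with $b_i\sim Ber(p_i/\bar\pi)$ independent and $\bar\pi=c_u/(n^2\|p_{\geq U}\|_1)$, so each realized nonzero entry equals $\bar\pi\gg p_i$ while $\mathbb{E}[q_i]=p_i$. Two consequences follow that your scheme cannot reproduce: the separation $\|q-p\|_t\gtrsim\|p_{\geq U}\|_1^{(2-t)/t}n^{-2(t-1)/t}$ holds only with high probability (via Chebyshev, using $\|p_{>I}\|_1\ge 1/n$ and Lemma \ref{I_and_U}), and indistinguishability is \emph{not} obtained from the $\prod_i\cosh(n\gamma_i^2/p_i)$ identity — which would blow up since the perturbation exceeds $p_i$ — but by splitting the total variation on the event that every coordinate past $U$ is observed at most once, bounding a truncated $\chi^2$ on that event and showing the complementary event has probability $O(\cI+c_u)$ under both the null and the mixture; Lemma \ref{I_and_U} then converts $U$ into $I$ in the rate. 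If you want to keep your Poissonized Le Cam framework, you must replace the Rademacher perturbation on the tail by such a $\{0,\bar\pi\}$-valued mean-preserving prior and redo the distinguishability bound conditionally on the no-collision event; as written, the argument does not prove the proposition.
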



To prove this lower bound, we  once more use Le Cam's two points method with a \textit{sparse} prior distribution. Define the smallest index $U > I$ such that $n^2 p_U \norm{P_{\geq U}}_1 \leq c_u < 1$ where $c_u>0$ is a small constant defined in the appendix. We define 
$$ \bar \pi = \frac{c_u}{ n^2\norm{p_{\geq U}}_1} \text{ and } \pi_i = \frac{p_i}{\bar \pi}.$$

Index $U$ has no further meaning than to guarantee that for all $i\geq U$ : $\pi_i \in [0,1]$. In particular, $\pi_i$ is a Bernoulli parameter. Now, we define the following prior on $q$. For any $i<U$ we set $q_i = p_i$. Otherwise for $i \geq U$, we set $b_i \sim Ber(\pi_i)$ mutually independent, and 
\begin{equation}\label{prior_tail}
    q_b(j) = b_j \bar \pi,
\end{equation}
We now consider the mixture of the probability distributions $q_b$:
$$ \bar{\mathbb P}_{\mathrm{tail}} = \sum_{b \in \{0,1\}^{\{U+1, \dots, N\}}} \bigg(\prod_{j>U} \pi_j^{b_j} (1-\pi_j)^{1-b_j}\bigg) q_b^{\otimes n}. $$
As above, we prove that the data $X_1,\dots, X_n$ drawn from this mixture $\bar{\mathbb P}_{\mathrm{tail}}$ is difficult to distinguish from the data $X_1', \dots, X_n'$ drawn from $\mathbb P_p$. 
Moreover, we show that with high probability, the $\ell_t$ distance between $\bar{\mathbb P}_{\mathrm{tail}}$ and $p$, is larger, up to an absolute constant than
\[ \scalebox{1.1}{$ \frac{\left\| p_{\geq U}\right\|_1^\frac{2-t}{t}}{n^\frac{2(t-1)}{t}}.$
} \]

Finally, to conclude the proof, we show in Lemma \ref{I_and_U} that 
\[ \scalebox{1.2}{$ \frac{\left\| p_{\geq U}\right\|_1^\frac{2-t}{t}}{n^\frac{2(t-1)}{t}} + \frac{1}{n} \asymp_\eta \frac{\left\|p_{>I}\right\|_1^\frac{2-t}{t}}{n^\frac{2(t-1)}{t}} + \frac{1}{n} $} \] 
in words, that we can replace $U$ by $I$. This lower bound departs significantly from the one in~\cite{valiant2017automatic} in the case $t=1$, which is significantly simpler than for $t>1$ for the tail coefficients.



\subsubsection{Combination of both lower bounds}

By combining Propositions~\ref{LB:Bulk} and~\ref{LBtail}, we obtain the following theorem.
\begin{theorem}\label{LB} 

Let $t \in [1,2]$, and consider any test $\psi$. There exists a constant $\uc'>0$ depending only on $\eta$ and a distribution $q$ such that 
$$\left\|Q-P\right\|_t \geq \uc'\left( \sqrt{\frac{\norm{p_{\leq I}}_r}{n}} + \frac{\norm{p_{>I}}_1^{\frac{2-t}{t}}}{n^\frac{2t-2}{t}}  + \frac{1}{n}\right),$$
and 
$$\mathbb P_p(\psi = 1) + \mathbb P_q(\psi = 0) \geq \eta.$$
\end{theorem}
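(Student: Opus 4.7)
The plan is to deduce Theorem \ref{LB} by combining the bulk bound of Proposition \ref{LB:Bulk} with the tail bound of Proposition \ref{LBtail}. Since each proposition exhibits a distribution $q$ witnessing that every test has risk $\geq \eta$, for any fixed $p$ we can take whichever of the two priors yields the larger $\ell_t$-separation from $p$. Noting that $\|q-p\|_t \geq \|(q-p)_{\leq A}\|_t$ and $\|q-p\|_t \geq \|(q-p)_{>A}\|_t$ in the respective cases, and that $\max(x,y,z) \geq (x+y+z)/3$, this step yields
$$\rho^* \gtrsim_\eta \frac{\|p_{\leq A}\|_r^{r/t}}{\sqrt{n}\,\|p_{\leq I}\|_r^{r/4}} \;+\; \frac{\|p_{>I}\|_1^{(2-t)/t}}{n^{(2t-2)/t}} \;+\; \frac{1}{n},$$
where the last $1/n$ is already produced by Proposition \ref{LB:Bulk}.

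The tail contribution already matches the claimed expression verbatim, so the remaining task is to show that the bulk contribution combined with $1/n$ is equivalent, up to an $\eta$- and $t$-dependent constant, to $\sqrt{\|p_{\leq I}\|_r/n}+1/n$. The central algebraic ingredient is the identity
$$\frac{r}{t}-\frac{r}{4} \;=\; \frac{2}{4-t}-\frac{t}{2(4-t)} \;=\; \frac{1}{2},$$
which follows directly from $r=2t/(4-t)$. Consequently, if $\|p_{\leq A}\|_r \asymp \|p_{\leq I}\|_r$, the bulk quantity simplifies to $\sqrt{\|p_{\leq I}\|_r/n}$ up to constants and we are done.

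The main obstacle is therefore the case $A<I$, in which $\|p_{\leq A}\|_r$ may be strictly smaller than $\|p_{\leq I}\|_r$. Here I would exploit the very definition \eqref{def_A} of $A$, which forces $p_i^{b/2} < \cA/(\sqrt{n}\,\|p_{\leq I}\|_r^{1/4})$ for every $A<i\leq I$. A case split is then performed: either the heavy part $\|p_{\leq A}\|_r$ still contributes a constant fraction of $\|p_{\leq I}\|_r$, and the first display gives the right rate directly; or the missing mass $\|p_{(A,I]}\|_r$ dominates, in which case the uniform bound $p_i^b < \cA^2/(n\|p_{\leq I}\|_r^{1/2})$ for $i>A$, combined with $r\geq b$ (since $t\geq 1$) and a short computation, forces $\|p_{\leq I}\|_r \lesssim_\eta 1/n$, so that the trivial term $1/n$ already absorbs $\sqrt{\|p_{\leq I}\|_r/n}$.

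I expect this last reconciliation lemma---an ``$A$-versus-$I$'' equivalence analogous in spirit to Lemma \ref{I_and_U} used in the tail analysis---to be the only delicate piece. Once it is in place, the combination step above immediately yields the claimed lower bound with constant $\uc'$ depending only on $\eta$, completing the proof of Theorem \ref{LB}.
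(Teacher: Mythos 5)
Your combination strategy (take the better of the two priors from Propositions \ref{LB:Bulk} and \ref{LBtail}, then reconcile the index $A$ appearing in the bulk bound with the index $I$ appearing in the target rate) is exactly the paper's, and your first branch, where $\|p_{\leq A}\|_r \gtrsim \|p_{\leq I}\|_r$ and the exponent identity $\frac{r}{t}-\frac{r}{4}=\frac12$ turns the bulk term into $\sqrt{\|p_{\leq I}\|_r/n}$, is correct. The gap is in your second branch: it is not true that when the mass $\|p_{(A,I]}\|_r$ dominates, the pointwise bound $p_i^{b}<\cA^2/\big(n(\sum_{j\leq I}p_j^r)^{1/2}\big)$ for $i>A$ forces $\|p_{\leq I}\|_r\lesssim_\eta 1/n$; that bound controls each coordinate individually but says nothing about how many such coordinates there are. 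Concretely, take $t=1$ (so $r=b=2/3$) and $p_1=\dots=p_N=\epsilon$ with $n^2\epsilon^2=\alpha/N$ for a constant $\alpha\in(\cI,\cI+\cA^4)$: then $I\asymp N$, while the defining inequality of $A$ fails for every $a\leq I$, so $A=-\infty$ and you are in your second branch; yet $\|p_{\leq I}\|_{2/3}=I^{3/2}\epsilon\asymp N/n\gg 1/n$, so $\sqrt{\|p_{\leq I}\|_r/n}\asymp\sqrt{N}/n$ is not absorbed by the $1/n$ term and your argument does not produce the claimed rate for such $p$.

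What actually closes the $A<I$ case is the \emph{tail} term, not $1/n$; this is the content of the paper's Lemma \ref{A_and_I}. After rescaling $p'=np$, minimality of $I$ gives $\sum_{i\geq I}p_i'^2\gtrsim_\eta 1$, while the definition of $A$ (with $p_I'\leq p_{A+1}'$) gives $p_I'^{2b}\sum_{i\leq I}p_i'^r\lesssim_\eta 1$; since $2b=2-r$, dividing yields $\big(\sum_{i\geq I}p_i'\big)^{2-r}\gtrsim\sum_{i\leq I}p_i'^r$, and raising to the power $\tfrac{1}{2r}$ and undoing the rescaling gives
\begin{equation*}
\frac{\norm{p_{>I}}_1^{\frac{2-t}{t}}}{n^{\frac{2t-2}{t}}}\;\gtrsim_\eta\;\sqrt{\frac{\norm{p_{\leq I}}_r}{n}}
\end{equation*}
(up to the $1/n$ corrections handled by the Lemma \ref{I_and_U}--type manipulations). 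In the example above this is precisely what happens: $\|p_{>I}\|_1\asymp\sqrt{N}/n$ dominates $\sqrt{\|p_{\leq I}\|_{2/3}/n}$. Replace your second branch by this comparison between the bulk-at-$I$ term and the tail term, and the rest of your combination argument goes through as in the paper.
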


\hfill

This theorem implies that 
$$ \rho^*  \gtrsim_\eta \sqrt{\frac{\norm{p_{\leq I}}_r}{n}} + \frac{\norm{p_{>I}}_1^{\frac{2-t}{t}}}{n^\frac{2t-2}{t}}  + \frac{1}{n},$$
which is  a lower bound on the separation radius $\rho^*$, up to a positive constant depending only on $\eta$.\\

Note that when combining Propositions \ref{LB:Bulk} and \ref{LBtail}, we do not get exactly the expression in Theorem \ref{LB}. We actually obtain:
$$ \rho^* \gtrsim_\eta \frac{\norm{P_{\leq A}}_r^{r/t}}{\sqrt{n}\norm{P_{\leq I}}_r^{r/4}} + \frac{\left\|p_{>I}\right\|_1^{\frac{2-t}{t}}}{n^\frac{2t-2}{t}}+ \frac{1}{n}.$$

We therefore need to show that this expression is equivalent to that in Theorem \ref{LB}. This is done by using  Lemma \ref{A_and_I}, which states that we can replace $\frac{\norm{P_{\leq A}}_r^\frac{r}{t}}{\sqrt{n}\norm{P_{\leq I}}_r^\frac{r}{4}}$ by $\sqrt{\frac{\norm{p_{\leq I}}_r}{n}}$ without changing the rate, i.e.
$$ \frac{\norm{P_{\leq A}}_r^\frac{r}{t}}{\sqrt{n}\norm{P_{\leq I}}_r^\frac{r}{4}} + \frac{\left\|p_{>I}\right\|_1^{\frac{2-t}{t}}}{n^\frac{2t-2}{t}} + \frac{1}{n} ~~~~ \asymp_\eta ~~~~ \sqrt{\frac{\norm{p_{\leq I}}_r}{n}} + \frac{\left\|p_{>I}\right\|_1^{\frac{2-t}{t}}}{n^\frac{2t-2}{t}} + \frac{1}{n}. $$


\hfill

\textbf{\underline{Remark on index $A$}:} As explained in \eqref{Prior_bulk}, the optimal prior is of the form $p_i \pm \gamma_i$ where $\gamma_i$ is proportional to $p_i^\frac{2}{4-t}$, according to Equation~\eqref{gamma}. Since $\frac{2}{4-t} \leq 1$, we can have $\gamma_i > p_i$ if $p_i$ is too small, so that it is impossible to set the optimal prior $p_i \pm \gamma_i$, since $p_i - \gamma_i$ has to be a Bernoulli parameter. The index $A$ is just the last index ensuring $p_A \geq \gamma_A$ so that our lower bound construction is well-defined.\\

\textbf{\underline{Remark on index $I$}:} Index $I$ defines the largest set of coefficients $p_{>I}$ such that, \textit{whp}, no coordinate $j >I$ is observed twice or more. This is exactly the interpretation of the relation $\sum_{j>I} n^2 p_j^2 \leq \cI$ for a small constant $\cI$. As shown in Lemma \ref{A_and_1_over_n}, it is important that the definition of $A$ also implies that $\sum_{j>A} n^2 p_j^2 \leq \cI + \cA^4$, which leads us to tune the constants $\cI$ and $\cA$ such that this sum is small. Therefore, on the actual tail $(p_{>A})$, no same coordinate will be observed twice \textit{whp} under $H_0$. This is the reason why the phenomena involved are different on the bulk and on the tail. On the bulk, many coordinates are observed at least twice, which allows us to build an estimator based on the \textit{dispersion} of the data around its mean, namely the renormalized $\chi^2$ estimator which is a modified estimator of the variance. Like in the classical gaussian signal detection setting, the optimal procedure for detecting whether or not the data is drawn from $p$ is to estimate the dispersion of the data. \\ 
On the tail, however, each coordinate is observed at most once, so that the \textit{dispersion} of the data cannot be estimated. On this set, we rather design a prior distribution which mimics the behavior of the null distribution, while being as separated from it as possible. More precisely, we impose that $whp$, no coordinate is observed twice, and such that coordinate-wise, the expected number of observations is equal to that under the null hypothesis $p$. In short, this prior is designed such that its first order moment is equal to that under the null and its second order moment is unobserved \textit{whp}. Under both of these constraints, we maximize the $\ell_t$ distance between the null hypothesis $p$ and the possible distributions composing the prior. When $t>1$, the result of this process is a prior that needs to be relatively sparse - which is significantly more involved than the case $t=1$ treated in~\cite{valiant2017automatic}.\\

\revision{\textbf{\underline{Remark on the lower bounds}:} The bulk lower bound is close to that of~\cite{valiant2017automatic}. The tail lower bound relies on a sparse prior that is an existing technique (for example in sparse testing, see~\cite{baraud2002non}, \cite{collier2017minimax}, \cite{kotekal2021minimax}) and is very different from the construction in~\cite{valiant2017automatic}. Handling the indices $I, A$ and $U$ require careful manipulations that we believe are new techniques.}
\section{Upper bounds}\label{sec:Upper_bounds}

We use sample splitting to define
$$S = \sum_{i=1}^k X_i,~~~\mathrm{and}~~~S' = \sum_{i=k+1}^{n} X_i,$$
We also write $$b = \frac{4-2t}{4-t}.$$

\subsubsection{Test for the bulk coefficients}

We now introduce the following test statistic on the bulk coefficients, i.e.~the coefficients with index smaller than $A$ :
\begin{equation}\label{test_bulk}
    T_{\bulk} = \sum_{i \leq A} \frac{1}{p_i^b} \left(\frac{S_i}{k} - p_i \right)\left(\frac{S'_i}{k} - p_i \right),
\end{equation}
which is a weighted $\chi^2$ statistic. We now define the test 
$$\psi_{\bulk} = \mathbf 1\left\{T_{\bulk} >  \frac{\uc}{n} \left\|p_{\leq A}\right\|_r^\frac{r}{2}\right\},$$
where $\uc = 4/\sqrt{\eta}$ is a large enough constant, depending only on $\eta$. We prove the following proposition regarding this statistic and the bulk of the vector $p$.
\begin{proposition}
There exists $\uc'>0$, such that the following holds. 
\begin{itemize}
    \item Type I error is bounded:
    $$\mathbb P_p(\psi_{\bulk} = 1) \leq \eta/2.$$
    \item Type II error is bounded: for any $q$ such that
$$\norm{q_{\leq A}}_t \geq \uc' \left( \sqrt{\frac{\norm{p_{\leq I}}_r}{n}} + \frac{1}{n}\right),$$
it holds that
    $$\mathbb P_q(\psi_{\bulk} = 0) \leq \eta/2.$$
\end{itemize}
\end{proposition}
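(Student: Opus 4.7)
My plan is a Chebyshev-type analysis of the weighted $\chi^2$ statistic $T_{\bulk}$ that exploits the independence between $S$ and $S'$ granted by sample splitting together with the coordinate-wise independence of the observations. For the moments, under $\mathbb{P}_p$ each factor $S_i/k-p_i$ and $S_i'/k-p_i$ is independent and centred, so $\mathbb{E}_p[T_{\bulk}]=0$, while under $\mathbb{P}_q$ the same independence gives $\mathbb{E}_q[T_{\bulk}]=\sum_{i\leq A}(q_i-p_i)^2/p_i^b$. Coordinate independence then yields
$$\mathrm{Var}_p[T_{\bulk}] \leq \frac{1}{k^2}\sum_{i\leq A} p_i^{2-2b} = \frac{\|p_{\leq A}\|_r^r}{k^2},$$
where I use the key algebraic identity $2-2b=r$ (direct from $r=2t/(4-t)$, $b=(4-2t)/(4-t)$). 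Applying Chebyshev at threshold $\tau = (\uc/n)\|p_{\leq A}\|_r^{r/2}$ with $n=2k$ gives $\mathbb{P}_p(T_{\bulk}>\tau) \leq 4/\uc^2 \leq \eta/2$ for $\uc=4/\sqrt{\eta}$, settling Type I.

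For Type II, the main reduction is Hölder's inequality with conjugate exponents $2/t$ and $2/(2-t)$: writing $\Delta_i = q_i-p_i$,
$$\|(q-p)_{\leq A}\|_t^t = \sum_{i\leq A}\frac{|\Delta_i|^t}{p_i^{bt/2}}\,p_i^{bt/2} \leq \mathbb{E}_q[T_{\bulk}]^{t/2}\,\|p_{\leq A}\|_r^{r(2-t)/2},$$
using the identity $bt/(2-t)=r$ to collapse the $p_i$-factor to an $\ell_r$-norm. Rearranging gives
$$\mathbb{E}_q[T_{\bulk}] \geq \frac{\|(q-p)_{\leq A}\|_t^2}{\|p_{\leq A}\|_r^{r(2-t)/t}}.$$
Combining with $\|p_{\leq A}\|_r\leq \|p_{\leq I}\|_r$ and the sum identity $r/2+r(2-t)/t=1$, the separation hypothesis (interpreted as $\|(q-p)_{\leq A}\|_t\geq \uc'(\sqrt{\|p_{\leq I}\|_r/n}+1/n)$) forces $\mathbb{E}_q[T_{\bulk}] \geq C\tau$ with $C$ arbitrarily large provided $\uc'$ is large enough in terms of $\uc$ and $\eta$; the $1/n$ part handles the degenerate case where the signal concentrates on a single coordinate.

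It remains to control $\mathrm{Var}_q[T_{\bulk}]$, which I expand via $\mathrm{Var}[YY']=\mathbb{E}[Y^2]\mathbb{E}[Y'^2]-\mathbb{E}[Y]^4$ for iid $Y,Y'$ with $\mathbb{E}[Y^2]=q_i(1-q_i)/k+\Delta_i^2$. This decomposes the variance into a noise piece $\sum q_i^2(1-q_i)^2/(k^2 p_i^{2b})$, bounded using $q_i\leq p_i+|\Delta_i|$ and the null-variance computation, and a signal piece $(2/k)\sum \Delta_i^2 q_i(1-q_i)/p_i^{2b}$, which I would bound by Cauchy--Schwarz against $\mathbb{E}_q[T_{\bulk}]^2$. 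A one-sided Chebyshev bound
$$\mathbb{P}_q(T_{\bulk}\leq\tau) \leq \frac{\mathrm{Var}_q[T_{\bulk}]}{(\mathbb{E}_q[T_{\bulk}]-\tau)^2}$$
then closes the argument for $\uc'$ sufficiently large. The main obstacle is precisely this variance control under $\mathbb{P}_q$: showing that the signal piece is dominated by $\mathbb{E}_q[T_{\bulk}]^2$ uniformly over the alternative, which crucially exploits the definition of $A$ --- this ensures $p_i^b \gtrsim 1/(\sqrt{n}\,\|p_{\leq I}\|_r^{1/4})^2$ throughout the bulk and thus prevents any single term $\Delta_i^2 q_i/(kp_i^{2b})$ from dominating. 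Once this is in place, the remaining work consists of routine $\chi^2$-style manipulations together with the three algebraic identities $2-2b=r$, $bt/(2-t)=r$, and $r/2+r(2-t)/t=1$.
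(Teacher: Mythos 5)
Your plan is essentially the paper's own proof: the same moment computations (the identity $2-2b=r$ and the product-variance formula for the split samples), the same Chebyshev bound under the null, the same H\"older step with conjugate exponents $2/t$ and $2/(2-t)$ turning the $\ell_t$-separation into $\mathbb E_q[T_{\bulk}]\gtrsim \uc'^2\norm{p_{\leq I}}_r^{r/2}/n$ (your exponent identities are all correct), and a one-sided Chebyshev for the type-II error once $\mathrm{Var}_q[T_{\bulk}]\ll\mathbb E_q[T_{\bulk}]^2$. So there is no methodological divergence to report, and the parts you carry out explicitly are right.

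The step that does not go through as described is exactly the one you flag as the main obstacle: the variance control under $q$. Writing $q_i\leq p_i+|\Delta_i|$, the signal piece splits into $\frac{2}{k}\sum_{i\leq A}p_i^{1-2b}\Delta_i^2$, which is harmless (since $1-b=r/2$ you can pull out $\max_{i\leq A}p_i^{r/2}\leq\norm{p_{\leq A}}_r^{r/2}$ and use the lower bound on $\mathbb E_q[T_{\bulk}]$), and the cubic term $\frac{2}{k}\sum_{i\leq A}|\Delta_i|^3/p_i^{2b}$, where ``Cauchy--Schwarz against $\mathbb E_q[T_{\bulk}]^2$'' is not enough. If you write the summand as $(\Delta_i^2/p_i^b)\cdot(|\Delta_i|/p_i^b)$ and bound the second factor by its supremum, then even using the full strength of the definition of $A$ (namely $p_i^{-b}\lesssim n\norm{p_{\leq I}}_r^{r/2}$ on the bulk) you end up needing $\mathbb E_q[T_{\bulk}]\gtrsim \norm{p_{\leq I}}_r^{r/2}\max_i|\Delta_i|$, whereas the separation only guarantees $\mathbb E_q[T_{\bulk}]\gtrsim\norm{p_{\leq I}}_r^{r/2}/n$; you are short by a factor of order $n\max_i|\Delta_i|$, and $\max_i|\Delta_i|$ can be of constant order under the alternative. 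What closes this (and what the paper does) is an interpolation that spends only a fractional power of the $A$-condition: bound the weighted $\ell_3$ norm by the weighted $\ell_2$ norm, $\sum_{i\leq A}|\Delta_i|^3/p_i^{2b}\leq\big(\sum_{i\leq A}\Delta_i^2/p_i^{4b/3}\big)^{3/2}$, use $p_i^{-b/3}\lesssim\big(n\norm{p_{\leq I}}_r^{r/2}\big)^{1/3}$ to reduce the exponent $4b/3$ to $b$, and then invoke $\norm{p_{\leq I}}_r^{r/4}/\sqrt n\lesssim\mathbb E_q[T_{\bulk}]^{1/2}/\uc'$, which follows from your H\"older step and the separation. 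With this interpolation in place (and the analogous sup-out argument for the $\frac{1}{k^2}\sum_{i\leq A}\Delta_i^2/p_i^{2b}$ part of the noise piece), the rest of your outline is indeed routine bookkeeping and coincides with the paper's argument.
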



For $t=1$, we get $r = \frac{2}{3}$, which is the norm identified in \cite{valiant2017automatic}. However, our setting is slightly different for three reasons. First, we consider multivariate binomial families rather than multinomials. Second, we consider separation distance for a fixed $n$ instead of sample complexity. Third, our result holds for any $t\in [1,2]$. However, in Subsection~\ref{ss:mult}, we prove that multivariate binomial and multinomial settings are related and that the rates can be transferred from our setting to the multinomial case.\\

Note that our cut-off is defined differently from that in \cite{valiant2017automatic}. In \cite{valiant2017automatic}, the cut-off $I'$ is the smallest index such that, for a fixed $\epsilon$: $\sum_{i>I'} p_i \leq \epsilon$. This definition therefore only involves the first order moment of the null distribution. 
In our setting, conversely, we define index $I$ using the second order moment of the null distribution, as the smallest index such that $\sum_{i>I} p_i^2 \leq \frac{\cI}{n^2}$.\\ 

The above result also generalizes the bound identified in \cite{valiant2017automatic}, by characterizing the testing rate for all $t \in [1,2]$ and sheds light on a duality between the $\ell_t$ and $\ell_r$ norms when $r = \frac{2t}{4-t}$. 

\subsubsection{Test for the tail coefficients}

\hfill

The tail test is a combination of two tests. We define the histogram of the data which is a sufficient statistic:
$$ \forall j>A, N_j:= \sum_{i=1}^n \mathbb{1}\{X_i = j\}$$

We first define the test $\psi_2$ which rejects $\mathcal H_0$ whenever one tail coordinate is observed twice. 
\begin{equation}\label{test_psi2}
    \psi_2 = \mathbb{1} \big\{\exists j>A: N_j \geq 2\big\}
\end{equation}

 We also define a statistic counting the number of observations on the tail, and the associated test, recalling that $\uc = 4/\sqrt{\eta}$:
\begin{equation}\label{test_psi1}
     T_{1} = \sum_{i > A} \frac{N_i}{n} - p_i, ~~ \psi_{1} = \mathbf 1\Big\{|T_1| > \uc \sqrt{\frac{\sum_{i>A} p_i}{n}} \Big\}.
\end{equation}

\hfill

\noindent We prove the following proposition regarding this statistic.\\

\begin{proposition}\label{Borne_sup_tail}
There exists $\uc' >0$, such that the following holds. 
\begin{itemize}
    \item Type I error is bounded:
    $$\mathbb P_p(\psi_{1} \lor \psi_2 = 1) \leq \eta/2.$$
    \item Type II error is bounded: for any $q$ such that
$$\norm{q_{>A}}_t \geq \uc'\left( \frac{\norm{p_{>A}}_1^{\frac{2-t}{t}}}{n^\frac{2t-2}{t}}  + \frac{1}{n}\right),$$
it holds that
    $$\mathbb P_q(\psi_{1} \lor \psi_2 = 0) \leq \eta/2.$$
\end{itemize}
\end{proposition}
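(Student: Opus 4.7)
The plan is to bound $\psi_1$ and $\psi_2$ separately and take a union bound. For $\psi_2$, a union bound yields $\mathbb{P}_p(\psi_2 = 1) \leq \binom{n}{2}\sum_{j>A}p_j^2 \leq (\cI + \cA^4)/2$, using the consequence $\sum_{j>A} n^2 p_j^2 \leq \cI + \cA^4$ of the definitions of $I$ and $A$ stated in the remarks following Theorem~\ref{LB}; this is at most $\eta/4$ once $\cI$ and $\cA$ are chosen small enough. For $\psi_1$, apply Chebyshev to $T_1$: $\mathbb{E}_p[T_1] = 0$ and $\mathrm{Var}_p(T_1) \leq \norm{p_{>A}}_1/n$, so $\mathbb{P}_p(\psi_1 = 1) \leq 1/\uc^2 = \eta/16 \leq \eta/4$. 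Summing yields the announced $\eta/2$ bound.

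\textbf{Type II error.} Assuming $\norm{(q-p)_{>A}}_t \geq \uc'\bigl(\norm{p_{>A}}_1^{(2-t)/t}/n^{(2t-2)/t} + 1/n\bigr)$, the key tool will be the H\"older interpolation $\norm{x}_t^t \leq \norm{x}_1^{2-t}\,\norm{x}_2^{2t-2}$ (valid for $t \in [1,2]$), applied to $x = (q-p)_{>A}$. The plan is to split into two cases. In Case A, if $\bigl|\norm{q_{>A}}_1 - \norm{p_{>A}}_1\bigr|$ exceeds a sufficiently large multiple of $\sqrt{\norm{p_{>A}}_1/n} + 1/n$, then $|\mathbb{E}_q[T_1]|$ dominates both the threshold $\uc\sqrt{\norm{p_{>A}}_1/n}$ and the Chebyshev fluctuation $\sqrt{\mathrm{Var}_q(T_1)} \leq \sqrt{\norm{q_{>A}}_1/n}$, and Chebyshev gives $\mathbb{P}_q(\psi_1 = 0) \leq \eta/2$. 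Otherwise (Case B), $\norm{(q-p)_{>A}}_1 \lesssim \norm{p_{>A}}_1 + 1/n$; plugging this into the interpolation and splitting on whether $\norm{p_{>A}}_1 \gtrsim 1/n$ --- in both regimes the ratio $(\norm{p_{>A}}_1^{2-t}/n^{2t-2} + 1/n^t)/(\norm{p_{>A}}_1^{2-t} + 1/n^{2-t})$ turns out to be $\asymp 1/n^{2t-2}$ --- forces $\norm{(q-p)_{>A}}_2 \gtrsim (\uc')^{t/(2t-2)}/n$. Combined with $\norm{p_{>A}}_2 \lesssim 1/n$ (same remarks), the triangle inequality gives $\norm{q_{>A}}_2 \geq C/n$, with $C$ arbitrarily large by enlarging $\uc'$.

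To translate $\norm{q_{>A}}_2 \geq C/n$ into $\mathbb{P}_q(\psi_2 = 1) \geq 1 - \eta/2$, the plan is a further dichotomy: either some tail $q_j \geq \lambda/n$ for a large $\lambda$, in which case $\mathbb{P}_q(N_j \geq 2) \geq 1 - (1+\lambda)e^{-\lambda} \geq 1 - \eta/4$ already; or $q_j \leq \lambda/n$ for all $j > A$, so $\mathbb{P}_q(N_j \geq 2) \geq c\min((nq_j)^2, 1) \geq c(nq_j)^2/\lambda^2$, and mutual independence of the $N_j$'s yields
\[ \mathbb{P}_q(\psi_2 = 0) \leq \exp\bigl(-cn^2\norm{q_{>A}}_2^2/\lambda^2\bigr) \leq \exp(-cC^2/\lambda^2) \leq \eta/4. \]

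\textbf{Main obstacle.} The hard part will be the borderline $t=1$, where $2t-2 = 0$ makes the interpolation inequality degenerate to a tautology and formally blows up the exponent $t/(2t-2)$. There the plan is to argue directly: if the signed tail deviation $\sum_{j>A}(q_j-p_j)$ is large, $\psi_1$ already rejects as in Case A; if it is small despite $\norm{(q-p)_{>A}}_1$ being large (cancellation between excess and deficit coordinates), the excess set $\{j : q_j > p_j\}$ still carries mass $\gtrsim \uc'(\norm{p_{>A}}_1 + 1/n)$, and the same single-heavy-coordinate versus many-small-coordinates dichotomy triggers $\psi_2$. The remaining subtlety is the consistent tuning of the several absolute constants $\cI, \cA, \uc, \uc', C_1, \lambda, C$ so that the chain of inequalities is simultaneously satisfiable.
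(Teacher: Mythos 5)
Your overall strategy coincides with the paper's: the type-I analysis is identical (union bound over collisions for $\psi_2$ using $\sum_{j>A} n^2 p_j^2 \leq \cA^4 + \cI$, Chebyshev for $T_1$), and for the type-II error you use the same H\"older interpolation $\norm{(q-p)_{>A}}_t^t \leq \norm{(q-p)_{>A}}_1^{2-t}\,\norm{(q-p)_{>A}}_2^{2t-2}$ to force a dichotomy between ``$\psi_1$ detects a first-moment shift'' and ``$\psi_2$ detects a large $\ell_2$ tail''. The paper splits directly on which H\"older factor is large and, in the $\ell_1$ branch, uses Lemma~\ref{sum_to_norm} to convert $\norm{(q-p)_{>A}}_1 \gtrsim \norm{p_{>A}}_1 + 1/n$ into a signed-mean deviation; you instead split on whether the signed deviation $|\norm{q_{>A}}_1 - \norm{p_{>A}}_1|$ is large. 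These are equivalent in substance, and your Chebyshev computation for $\psi_1$ and your no-collision product bound $\exp(-c\,n^2\norm{q_{>A}}_2^2)$ for $\psi_2$ match the paper's (which splits on $nq_j\le 1/2$ versus $nq_j>1/2$ rather than on a single heavy coordinate, a cosmetic difference).

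The one step that would fail as written is your patch for $t=1$. In the cancellation sub-case you claim that excess mass $\gtrsim \uc'(\norm{p_{>A}}_1 + 1/n)$ on the set $\{j>A: q_j>p_j\}$ triggers $\psi_2$; this is false in general, because that excess can be spread over many coordinates each of size $o(1/n)$, so that $n^2\norm{q_{>A}}_2^2 = o(1)$ and $\psi_2$ accepts with high probability. The correct observation, which is exactly what the paper's Lemma~\ref{sum_to_norm} encodes, is that this sub-case is vacuous: the deficit part of $(q-p)_{>A}$ is at most $\norm{p_{>A}}_1$, so if the signed deviation is only $O\bigl(\sqrt{\norm{p_{>A}}_1/n} + 1/n\bigr)$ then $\norm{(q-p)_{>A}}_1 \leq 2\norm{p_{>A}}_1 + O\bigl(\sqrt{\norm{p_{>A}}_1/n} + 1/n\bigr)$, contradicting the assumed separation $\norm{(q-p)_{>A}}_1 \geq \uc'(\norm{p_{>A}}_1 + 1/n)$ once $\uc'$ is large; hence at $t=1$ you are automatically in your Case A and $\psi_1$ alone suffices. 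With that one-line replacement (and the constant bookkeeping you already flag), your proof goes through and is essentially the paper's argument.
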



Recall that the tail is defined such that, \textit{whp} under $\mathcal{H}_0$, no same coordinate is observed at least twice. 
We therefore combine two tests: The test $\psi_2$ rejects $\mathcal{H}_0$ if one of the coordinates is observed at least twice, while the test $\psi_1$ rejects $\mathcal{H}_0$ if the total mass of observed coordinates differs substantially from its expectation under the null. Proposition \ref{Borne_sup_tail} proves that this combination of tests reaches the optimal rate.\\

In \cite{valiant2017automatic}, the tail test only involves the first order moment, which is sufficient in the case of the $\ell_1$ norm. Moreover, in the proof of Proposition \ref{Borne_sup_tail}, it becomes clear that for $t=1$ we only need the test $\psi_1$ and for $t=2$ we only need the test $\psi_2$. However in the case of the $\ell_t$ for $t \in (1,2)$, the combination of both $\psi_1$ and $\psi_2$ is necessary.
\hfill

\subsubsection{Aggregated test}

We now combine the above results to define the aggregated test. We define our test as
$$\psi = \psi_{\bulk} \lor \psi_{1}  \lor \psi_{2} .$$

This is the test rejecting the null whenever one of the three tests does. Denote by 
$$ \bar \rho = \sqrt{\frac{\norm{p_{\leq I}}_r}{n}}+ \frac{\norm{p_{>I}}_1^{\frac{2-t}{t}}}{n^\frac{2t-2}{t}}  + \frac{1}{n}.$$

The following theorem states that this test reaches the rate $\bar \rho$, which is the minimax rate $\rho^*$ given in Theorem \ref{Rate}. 
In other words, it guarantees that, whenever the two hypotheses are $\bar \rho$-separated in $\ell_t$ distance, this test has type-I and type-II errors upper bounded by $\eta/2$, ensuring that its risk is less than $\eta$. Since the minimax separation radius $\rho^*$ is the smallest radius ensuring the existence of a test satisfying this condition, we can conclude that $\rho^* \lesssim \bar \rho$. 

\begin{theorem}
There exists $\uc'>0$, such that the following holds. 
\begin{itemize}
    \item The type I error is bounded:
    $$\mathbb P_p(\psi = 1) \leq \eta/2.$$
    \item The type II error is bounded: for any $q$ such that
$$\norm{p-q}_t \geq \uc'\left( \sqrt{\frac{\norm{p_{\leq I}}_r}{n}}+ \frac{\norm{p_{>I}}_1^{\frac{2-t}{t}}}{n^\frac{2t-2}{t}}  + \frac{1}{n}\right),$$
it holds that
    $$\mathbb P_q(\psi = 0) \leq \eta/2.$$
\end{itemize}
\end{theorem}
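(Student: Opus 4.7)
The strategy is a standard aggregation argument: bound the Type I error of $\psi=\psi_{\bulk}\lor\psi_1\lor\psi_2$ under $\mathbb P_p$ by a union bound over the three sub-tests, and bound the Type II error under the alternative by splitting the support at index $A$ and routing the separation into whichever of the bulk or the tail sub-test is able to detect it.

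For Type I, I would write
\[
\mathbb P_p(\psi = 1)\;\leq\;\mathbb P_p(\psi_{\bulk}=1)\;+\;\mathbb P_p(\psi_1\lor\psi_2=1),
\]
and invoke the Type I bounds from the bulk proposition and Proposition~\ref{Borne_sup_tail}. To land at $\eta/2$ rather than $\eta$, each sub-test is re-tuned to fire with probability at most $\eta/4$ under $\mathbb P_p$, which only replaces the constant $\uc=4/\sqrt{\eta}$ of those propositions by $\uc=4\sqrt{2}/\sqrt{\eta}$ and accordingly inflates the constant $\uc'$ appearing in the final statement.

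For Type II, the key observation is that the supports $\{i\leq A\}$ and $\{i>A\}$ are disjoint, giving the exact decomposition
\[
\|p-q\|_t^{\,t}\;=\;\|(p-q)_{\leq A}\|_t^{\,t}\;+\;\|(p-q)_{>A}\|_t^{\,t},
\]
so that at least one of the two pieces is $\geq 2^{-1/t}\|p-q\|_t$. Writing $R:=\sqrt{\|p_{\leq I}\|_r/n}+\|p_{>I}\|_1^{(2-t)/t}/n^{(2t-2)/t}+1/n$, the hypothesis $\|p-q\|_t\geq\uc'R$ puts us in one of two cases. In the bulk case,
\[
\|(q-p)_{\leq A}\|_t\;\geq\;2^{-1/t}\uc'R\;\geq\;2^{-1/t}\uc'\!\left(\sqrt{\tfrac{\|p_{\leq I}\|_r}{n}}+\tfrac{1}{n}\right),
\]
and choosing $\uc'$ large enough relative to the constant in the bulk proposition, that proposition yields $\mathbb P_q(\psi_{\bulk}=0)\leq\eta/4$.

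The main obstacle lies in the tail case, because Proposition~\ref{Borne_sup_tail} states its Type II threshold in terms of $\|p_{>A}\|_1$, whereas Theorem~\ref{Rate} is stated in terms of $\|p_{>I}\|_1$. Since $A\leq I$ we have $\|p_{>A}\|_1\geq\|p_{>I}\|_1$, so the two rates a priori go in opposite directions and a dedicated comparison lemma is needed. I would establish, in the spirit of Lemmas~\ref{A_and_I} and~\ref{I_and_U}, the equivalence
\[
\frac{\|p_{>A}\|_1^{(2-t)/t}}{n^{(2t-2)/t}}+\frac{1}{n}\;\asymp_\eta\;\frac{\|p_{>I}\|_1^{(2-t)/t}}{n^{(2t-2)/t}}+\frac{1}{n}.
\]
This equivalence should follow from the identity $\|p_{>A}\|_1=\|p_{(A,I]}\|_1+\|p_{>I}\|_1$ together with the key structural bound $\sum_{j>A}p_j^2\leq(\cI+\cA^4)/n^2$ mentioned after Theorem~\ref{LB}: Cauchy--Schwarz yields $\|p_{(A,I]}\|_1\leq\sqrt{I-A}\cdot(\sum_{j>A}p_j^2)^{1/2}$, and then the subadditivity of $x\mapsto x^{(2-t)/t}$ on $[0,\infty)$ (valid since $(2-t)/t\in[0,1]$ for $t\in[1,2]$) transfers this control to the rate. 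Once the equivalence is in hand, the tail hypothesis $\|(q-p)_{>A}\|_t\geq 2^{-1/t}\uc'R$ implies the hypothesis of Proposition~\ref{Borne_sup_tail} (for $\uc'$ large enough), hence $\mathbb P_q(\psi_1\lor\psi_2=0)\leq\eta/4$; combining with the bulk case and the Type I bound concludes.
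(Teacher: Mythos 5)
Your overall architecture matches the paper's: union bound for the Type I error, a split of the alternative at index $A$ (the paper uses the triangle inequality, you use disjointness of supports — immaterial), the bulk proposition for the first case, and, for the tail case, a reduction of the hypothesis stated in terms of $\|p_{>I}\|_1$ to the hypothesis of Proposition~\ref{Borne_sup_tail} stated in terms of $\|p_{>A}\|_1$. You correctly identify this last comparison as the crux; the paper proves exactly the statement you need as Lemma~\ref{I_to_A} ($\|p_{>A}\|_1 + \tfrac1n \asymp \|p_{>I}\|_1 + \tfrac1n$), and your subadditivity step (using $\tfrac{2-t}{t}+\tfrac{2t-2}{t}=1$) to transfer the $\ell_1$ equivalence to the rate is fine.

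The genuine gap is in your proposed proof of that comparison. From $\sum_{j>A}p_j^2\le(\cI+\cA^4)/n^2$, Cauchy--Schwarz only gives
\begin{equation*}
\|p_{(A,I]}\|_1\;\le\;\sqrt{I-A}\,\Bigl(\sum_{j>A}p_j^2\Bigr)^{1/2}\;\lesssim\;\frac{\sqrt{I-A}}{n},
\end{equation*}
and nothing in the ingredients you cite controls $I-A$: the entries between $A$ and $I$ can be tiny and very numerous (e.g.\ of order $1/n^2$ and of cardinality of order $n^2$, which is compatible with the definitions of $A$ and $I$), so $\sqrt{I-A}/n$ can be of constant order while you need a bound of the form $\|p_{>I}\|_1+\tfrac1n$. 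The structural $\ell_2$ bound on $p_{>A}$ alone cannot close this; the missing ingredients are the \emph{minimality} of $I$ together with the decreasing ordering of $p$. Minimality gives $\sum_{i\ge I}p_i^2>\cI/n^2$, hence (when $I<N$) $p_{I}\,\|p_{\ge I}\|_1\ge\sum_{i\ge I}p_i^2>\cI/n^2$, i.e.\ a lower bound $p_I\gtrsim \cI/(n^2\|p_{\ge I}\|_1)$; combined with $\sum_{i=A+1}^{I}p_i^2\le \cA^4/n^2$ and $p_i\ge p_I$ on $(A,I]$ this yields directly $\|p_{(A,I]}\|_1\le \cA^4/(n^2p_I)\lesssim\|p_{>I}\|_1$ (no Cauchy--Schwarz needed), while in the boundary case $I=N$ minimality forces $p_N\ge\sqrt{\cI}/n$ and hence $\|p_{>A}\|_1\lesssim 1/n$. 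This is precisely the argument of Lemma~\ref{I_to_A}; with it in place of your Cauchy--Schwarz step, the rest of your plan goes through as in the paper.
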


\subsection{Remarks on the tests}

In the bulk tests, we propose test statistics based on sample splitting, whose variance is easier to express. However, those tests could be defined slightly differently without sample splitting, allowing also for the analysis of the case $n=1$. Denoting by $H$ the histogram of the data, we could define
$$ \widetilde T_{Bulk} = \sum_{j\leq A} \frac{1}{p_j^b} \left[\Big(\frac{H_j}{n} - p_j\Big)^2 - H_j\right]$$
and the associated test:
$$\widetilde \psi_{\bulk} = \mathbf 1\{\widetilde T_{\bulk} >  \frac{\uc}{n} \norm{p_{\leq A}}_r^\frac{r}{2}\}.$$ This test attains the same upper bound in terms of separation distance - up to multiplicative constants depending on $\eta$ - as the bulk test we define in Equation~\eqref{test_bulk}, and is therefore also optimal in the bulk regime.\\

To understand the interpolation between the extreme cases $t=1$ and $t=2$, an important remark is that the tail tests $\psi_1$ and $\psi_2$ do not capture the same signals. Under the alternative hypothesis, the test $\psi_1$ checks that the total mass of the tail coefficients $\|q_{>A}\|_1$ is not to far away from $\|p_{>A}\|_1$. As to test $\psi_2$, \textit{on the tail}, that is, on a set for which $\sum_{j>A}^N n^2 p_j^2 \ll 1$, it is actually equivalent to using a test for the second order moment. In other words, the test $\psi_2$ is equivalent to $\widetilde \psi_2 = \mathbf 1\{|T_2| >  \frac{\uc}{n} \norm{p_{>A}}_2 \}$ for a small constant $\uc$, where
$$ T_{2} = \sum_{i > A}  \left(\frac{S_i}{k} - p_i \right)\left(\frac{S'_i}{k} - p_i \right).$$ Therefore, the test $\psi_2$ checks that the second order moment of the tail of distribution $q_{>A}$ is not too different from that of $p_{>A}$, in other words, that it does not contain much greater coefficients than the corresponding values of $p_{>A}$.

\section{Further remarks on the results}\label{sec:further_remarks}

\subsection{Influence of the $\ell_t$ norm}\label{Influence_of_the_norm}

In this paper, we consider the separation distance in all $\ell_t$ norms for $t \in [1,2]$. The choice of $t$ influences the minimax separation distance.

In the extreme case $t=2$, the minimax separation distance reduces to: $\rho^* \asymp_\eta \sqrt{\frac{\norm{p_{\leq I}}_2}{n}} + \frac{1}{n}$, which can be further simplified as:
$$\rho^* \asymp_\eta \sqrt{\frac{\left\|p\right\|_2}{n}} + \frac{1}{n}.$$
Indeed, by definition of $I$: $\|p_{>I}\|_2 \lesssim_\eta \frac{1}{n}$. \revision{This case has already been solved in~\cite{chan2014optimal}}. In this case, as discussed earlier, a simple $\chi^2$ test would suffice for reaching this separation distance, and $p$ would only appear in the definition of the threshold of this test. Here we therefore do not need to combine a bulk with a tail test. A single $\chi^2$ test, applied on both the bulk and the tail (i.e. setting $A=N$), would suffice.\\ 


We now consider the opposite extreme case $t=1$. In this case
$$ \rho^* \asymp_\eta \sqrt{\frac{\|p_{\leq I}\|_{2/3}}{n}} + \|p_{>A}\|_1 + \frac{1}{n}.$$
In the minimax separation distance, the contribution of the Bulk coefficients involves the $\ell_{2/3}$ quasi-norm - as in~\cite{valiant2017automatic}. 
In terms of test statistic, this is reflected by the fact that the optimal Bulk test is based on a re-weighted $\chi^2$ test statistic whose weights depend on $p$. For each entry $j$, the optimal weight is larger when $p_j$ is small: indeed, for small $p_j$, coordinate $j$ has smaller variance. This re-weighting differs from the extreme case $t=2$, since, compared to the $\ell_2$ norm, the $\ell_1$ norm lays more emphasis on smaller entries of the perturbation $p-q$. As to the tail coefficients, however, the big picture is simpler as the minimax rate with respect to the tail coefficients is $\|p_{>A}\|_1$, which is very large. This rate implies in particular that only the total mass of the perturbations of the tail coefficients matters. We therefore do not need to use the test $\psi_2$, which is tailored to detect extreme values of the perturbations, and can only restrict to using $\psi_1$ when it comes to the tail coefficients.\\



Between the two extreme cases, that is, for $t \in (1,2)$, we have an interpolation between the two extreme scenarios. When it comes to the bulk, we need to re-weight the test statistics by weights that increase with $p_i$ for entry $i$ as in the case $t=1$. 
But the larger $t$, the milder the reweighting - as the $\ell_t$ norm puts more weight on large coefficients - until it vanishes for $t=2$. 
As for the tail, both tests $\psi_1$ and $\psi_2$ are required in this intermediate regime. Indeed, we need to control both the mass of the tail perturbations like for $t=1$, but also their extreme values like for $t=2$. \revision{Note that~\cite{waggoner2015lp} had already considered the global problem of $\ell_t$ testing for discrete distributions and identified (non-matching) upper and lower bounds.} \\


For $t>2$, the underlying phenomenon is fundamentally different. In this case, the $\ell_t$ norm emphasizes so much the large deviations that re-weighted $\chi^2$ tests - that are related to re-weighted second order moment estimation - seem to be sub-optimal for testing. We leave the case $t>2$ as an open problem.\\ 

In the minimax separation distance in $\ell_t$ norm, the bulk part $\sqrt{\frac{\|p_{\leq I}\|_r}{n}}$ involves a duality between the norms $\ell_t$ and $\ell_r$ 
for $r = \frac{2t}{4-t}$ - as was also the case for $t=1$ in~\cite{valiant2017automatic}. This phenomenon comes from a combination of Hölder's inequality and information theory. Define $\gamma = (\gamma_1, \dots, \gamma_A) \in [0,1]^A$, and define the random vector  $q = (p_1 + \delta_1 \gamma_1, \cdots, p_A + \delta_A \gamma_A)$ for $\delta_i \overset{iid}{\sim} Rad(\frac{1}{2})$ like in \eqref{Prior_bulk}, except that this time, we \textit{do not} impose that $(\gamma_i)_i$ is defined as in \eqref{gamma}. Introduce
$$ \Gamma := \left\{(\gamma_1, \dots, \gamma_A) \in [0,1]^A: \sum_{i=1}^A \frac{\gamma_i^4}{p_i^2} \leq \frac{\Cgamma}{n^2}; \;\; p_i - \gamma_i \in [0,1], \;\; p_i + \gamma_i \in [0,1] \right\},$$  
where $\Cgamma$ is a small enough constant depending only on $\eta$. Then by Lemma \ref{lem:gammaquatre} in the Appendix, whenever $\gamma \in \Gamma$, the $n$ samples\footnote{Although the proof is written for graph samples, it is argued in Subsection~\ref{ss:mult} that it can be transposed to the multinomial or the Poisson settings.} generated from the random vector $q$ have a probability distribution indistinguishable from the null hypothesis $p$. The largest $\gamma \in \Gamma$, when measured in $\ell_t$, therefore provides a lower bound on the minimax separation radius. It is found by solving:
$\max_{\gamma \in \Gamma} \; \sum_{i=1}^A \gamma_i^t$, which can be done using Hölder's inequality:
\begin{align*}
    \sum_{i=1}^A \gamma_i^t = \sum_{i=1}^A \left(\frac{\gamma_i^4}{p_i^2}\right)^{t/4} \hspace{-3mm}p_i^{t/2} \underset{\text{Hölder}}{\leq} \left(\sum_{i=1}^A \frac{\gamma_i^4}{p_i^2}\right)^{t/4} \left(\sum_{i=1}^A p_i^r\right)^{(4-t)/4} \hspace{-3mm} \leq \left(\frac{\Cgamma}{n^2}\right)^{t/4} \hspace{-3mm}\|p\|_r^{1/2t},
\end{align*}
where we have used Hölder's inequality with $a = \frac{4}{t}$ and $b = \frac{4}{4-t}$. Setting $\gamma^*$ the vector on the frontier of $\Gamma$ reaching the equality case in Hölder's inequality, we obtain for fixed $n$: $\|\gamma^*\|_t \propto \|p\|_r^{1/2}$.

As to the contribution of the tail, we refer the reader to the remarks below Proposition~\ref{LBtail}.

\subsection{Asymptotics as $n\to \infty$}

Consider now $p$ as being a \textit{fixed} multinomial distribution, or a fixed vector of Poisson parameters. Then by the definitions of $A$ and $I$, there exists an integer $n_0$ such that for all $n\geq n_0$, we have $I=A=N$. In words, we eventually no longer need to split the distribution into bulk and tail and we can define the bulk as the whole set of coefficients. For $n$ large enough ($n\geq n_0$), the local minimax rate therefore rewrites:
$$ \rho^*(p,n) \underset{n \to \infty}{\asymp} \begin{cases} \sqrt{ \frac{\left\|p^{-\max}\right\|_r}{n}} + \frac{1}{n} & \text{ in the multinomial case} \\ ~~~\sqrt{ \frac{\|p\|_r}{n}} ~~~ + \frac{1}{n}  &\text{ in the binomial or Poisson case}.\end{cases}$$

On the other hand the fast rate $\frac{1}{n}$ asymptotically dominates if $p$ is close to a Dirac multinomial distribution in the multinomial setting, or if e.g.~$p=0$ in the binomial and Poisson setting.\\


\paragraph{Acknowledgments.} Both authors acknowledge fruitful discussions with Alexandre Tsybakov, Cristina Butucea and Rajarshi Mukherjee. The work of A. Carpentier is partially supported by the Deutsche Forschungsgemeinschaft (DFG) Emmy Noether grant MuSyAD (CA 1488/1-1), by the DFG - 314838170, GRK 2297 MathCoRe, by the FG DFG, by the DFG CRC 1294 'Data Assimilation', Project A03, by the Forschungsgruppe FOR 5381 "Mathematical Statistics in the Information Age - Statistical Efficiency and Computational Tractability", Project TP 02, by the Agence Nationale de la Recherche (ANR) and the DFG on the French-German PRCI ANR ASCAI CA 1488/4-1 "Aktive und Batch-Segmentierung, Clustering und Seriation: Grundlagen der KI" and by the UFA-DFH through the French-German Doktorandenkolleg CDFA 01-18 and by the SFI Sachsen-Anhalt for the project RE-BCI.

\newpage

\noindent {\Huge \textbf{APPENDIX}}
\appendix

\section{Lower bound}

Let $p \in \mathcal{P}_N$. For $\mathcal P_1:=\mathcal P_1(\rho)$ a particular collection of elements of $\mathcal{P}_N$ satisfying $\mathcal{H}_{1,\rho}$ we denote by $\mathcal U(\mathcal P_1)$ the uniform distribution over $\mathcal P_1$. 

Let $\mathcal{G} = \left(\{0,1\}^{N}\right)^n$ be the set of all possible observations $(X_1,\dots, X_n)$ where $X_i = (X_i(1), \dots, X_i(N))$. The following lemma gives a way to derive a lower bound on $\rho^*$ by giving a sufficient condition, for a fixed $\rho$, that $R^*(\rho) \geq \eta$:
\begin{lemma}\label{lowerbound} If
$$\frac{1}{|\mathcal{G}|}\sum_{\mathbf X \in \mathcal{G}} \frac{\left(\mathbb{E}_{q \sim \mathcal{U}(\mathcal P_1)} \mathbb{P}_q(X) \right)^2}{\mathbb{P}_p(X)} \leq 1 + 4(1-\eta)^2,$$
Then $R^*(\rho) \geq \eta$.
\end{lemma}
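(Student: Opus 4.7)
The plan is to use a combination of Le Cam's two-point method (replacing the supremum over the composite alternative by a Bayesian average) together with Ingster's $\chi^2$-trick (upper bounding total variation by the $\chi^2$-divergence). The key observation is that the expression $\sum_{\mathbf{X}} \frac{(\mathbb{E}_{q \sim \mathcal{U}(\mathcal{P}_1)} \mathbb{P}_q(\mathbf{X}))^2}{\mathbb{P}_p(\mathbf{X})}$ is exactly $1 + \chi^2(\bar{\mathbb{P}}_1, \mathbb{P}_p)$ where $\bar{\mathbb{P}}_1 := \mathbb{E}_{q \sim \mathcal{U}(\mathcal{P}_1)} \mathbb{P}_q^{\otimes n}$ is the mixture probability distribution over $\mathcal{G}$; controlling this quantity will translate into a total variation bound and hence a testing risk lower bound. (I note that the factor $\tfrac{1}{|\mathcal{G}|}$ in the displayed condition appears to be extraneous relative to the standard form of the $\chi^2$-trick; I will proceed under the natural reading in which the inequality is $\sum_{\mathbf{X}} (\cdots) \leq 1 + 4(1-\eta)^2$.)

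First, I would reduce to a Bayesian two-hypothesis problem. For any test $\psi$, since $\mathcal{U}(\mathcal{P}_1)$ is supported on $\mathcal{P}_1(\rho) \subseteq \mathcal{H}_1$,
\[
R(\psi) \geq \mathbb{P}_p(\psi = 1) + \mathbb{E}_{q \sim \mathcal{U}(\mathcal{P}_1)} \mathbb{P}_q(\psi = 0) = \mathbb{P}_p(\psi = 1) + \bar{\mathbb{P}}_1(\psi = 0),
\]
by linearity (Fubini). Taking the infimum over $\psi$ and applying the Neyman--Pearson / Le Cam identity,
\[
R^*(\rho) \geq \inf_{\psi}\bigl[\mathbb{P}_p(\psi=1) + \bar{\mathbb{P}}_1(\psi=0)\bigr] = 1 - d_{TV}(\mathbb{P}_p, \bar{\mathbb{P}}_1).
\]

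Second, I would invoke the standard inequality $d_{TV}(\mu,\nu) \leq \tfrac{1}{2}\sqrt{\chi^2(\mu,\nu)}$ (a consequence of Cauchy--Schwarz applied to $|\mu-\nu| = |\mu/\nu - 1|\nu$ and Jensen), with $\mu = \bar{\mathbb{P}}_1$ and $\nu = \mathbb{P}_p$, giving
\[
d_{TV}(\mathbb{P}_p, \bar{\mathbb{P}}_1) \leq \tfrac{1}{2}\sqrt{\chi^2(\bar{\mathbb{P}}_1, \mathbb{P}_p)}.
\]
Then I would expand
\[
\chi^2(\bar{\mathbb{P}}_1, \mathbb{P}_p) = \sum_{\mathbf{X} \in \mathcal{G}} \frac{\bar{\mathbb{P}}_1(\mathbf{X})^2}{\mathbb{P}_p(\mathbf{X})} - 1 = \sum_{\mathbf{X} \in \mathcal{G}} \frac{\bigl(\mathbb{E}_{q \sim \mathcal{U}(\mathcal{P}_1)} \mathbb{P}_q(\mathbf{X})\bigr)^2}{\mathbb{P}_p(\mathbf{X})} - 1,
\]
which under the hypothesis is at most $4(1-\eta)^2$. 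Chaining the inequalities yields $d_{TV}(\mathbb{P}_p,\bar{\mathbb{P}}_1) \leq 1-\eta$, hence $R^*(\rho) \geq \eta$.

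There is no serious technical obstacle here: the proof is essentially a bookkeeping exercise assembling three classical ingredients (Le Cam reduction, TV-vs-$\chi^2$ inequality, and the $\chi^2$ formula for a mixture versus a simple hypothesis). The only potentially subtle point is the interpretation of the displayed condition and checking that Fubini applies so that $\mathbb{E}_{q \sim \mathcal{U}(\mathcal{P}_1)} \mathbb{P}_q(\mathbf{X})$ really is the mixture density at $\mathbf{X}$, which is immediate in the discrete setting considered here. This lemma is the workhorse used in Sections~\ref{sec:Lower_bounds} to control the mixture priors $\bar{\mathbb{P}}_{\mathrm{bulk}}$ and $\bar{\mathbb{P}}_{\mathrm{tail}}$; the real work occurs downstream, in bounding the specific chi-squared quantities for those priors.
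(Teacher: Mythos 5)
Your proposal is correct and follows essentially the same route as the paper: the Le Cam reduction to the Bayes risk against the uniform mixture over $\mathcal P_1$, the bound $d_{TV} \leq \tfrac{1}{2}\sqrt{\chi^2}$, and the expansion of the $\chi^2$-divergence as the displayed sum minus one. Your remark about the factor $\tfrac{1}{|\mathcal{G}|}$ is well taken --- it is a notational artefact that the paper carries formally through the statement and proof (and effectively reinterprets as the normalization of the average over the prior in the subsequent application) --- and your reading of the condition is the intended one.
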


\begin{proof}[Proof of Lemma~\ref{lowerbound}]

We have that:
\begin{align*}
    R^*(\rho) & \geq \inf_{\psi \;\text{test}} \mathbb{P}_p(\psi = 1) + \sup_{q \in \mathcal P_1} \mathbb{P}_q (\psi = 0) \;\;\;\; \text{ (all elements of } \mathcal{P}_1 \text{ satisfy } \mathcal{H}_1\text{)}\\
    & \geq \inf_{\psi \;\text{test}}\mathbb{P}_p(\psi = 1) + \mathbb{E}_{q \sim \mathcal{U}(\mathcal P_1)} \mathbb{P}_q(\psi = 0) \;\;\;\;\; \text{ (the supremum is greater than the integral)}\\
        & = 1 + \inf_{\psi \;\text{test}}\mathbb{P}_p(\psi = 1) - \mathbb{E}_{q \sim \mathcal{U}(\mathcal P_1)} \mathbb{P}_q(\psi = 1)\\
    & = 1 - \sup_{\psi \;\text{test}}\abs{\mathbb{P}_p(\psi = 1) - \mathbb{E}_{q \sim \mathcal{U}(\mathcal P_1)} \mathbb{P}_q(\psi = 1) }\\
    & = 1 - d_{TV}(\mathbb{P}_p,\mathbb{E}_{q \sim \mathcal{U}(\mathcal P_1)} \mathbb{P}_q)\\
    & \geq 1 - \frac{1}{2}\sqrt{\chi^2(\mathbb{E}_{q \sim \mathcal{U}(\mathcal P_1)} \mathbb{P}_q \; ||\;\mathbb{P}_p)},
\end{align*}

where the definition of the $\chi^2$ divergence can be found in \cite{tsybakov2008introduction}, as well as the proof for the inequality $d_{TV} \leq \frac{1}{2} \sqrt{\chi^2}$. Therefore:
\begin{align*}
    R^*(\rho) & \geq 1 - \frac{1}{2}\sqrt{\chi^2(\mathbb{E}_{q \sim \mathcal{U}(\mathcal P_1)} \mathbb{P}_q \; ||\;\mathbb{P}_p)}\\
    & = 1 - \frac{1}{2}\sqrt{\frac{1}{|\mathcal{G}|}\sum_{X \in \mathcal{G}} \frac{\left(\mathbb{E}_{q \sim \mathcal{U}(\mathcal P_1)} \mathbb{P}_q(X) \right)^2}{\mathbb{P}_p(X)} -1}
\end{align*}

Therefore, to have $R^*(\rho) \geq \eta$ it suffices that 
$$\frac{1}{|\mathcal{G}|}\sum_{X \in \mathcal{G}} \frac{\left(\mathbb{E}_{q \sim \mathcal{U}(\mathcal P_1)} \mathbb{P}_q(X) \right)^2}{\mathbb{P}_p(X)} \leq 1 + 4(1-\eta)^2.$$
\end{proof}

For all $i = 1, \cdots, N, $ let $ \gamma_i \in [0,p_i]$ and let $\gamma = \left(\gamma_i \right)_i$. We now apply the previous lemma with 
$$ \mathcal P_1 = \left\{p + \left(\delta_i \gamma_i\right)_{i \leq N} \; |\; \delta \in \{\pm 1\}^{N}  \right\}.$$

\vspace{3mm}

\begin{lemma}\label{lem:gammaquatre}
There exists a sufficiently small absolute constant $\cL{\ref{lem:gammaquatre}}$ such that, if $\sum\limits_{i=1}^{N} \frac{ \gamma^4_i}{p_i^2} \;\leq\; \frac{\cL{\ref{lem:gammaquatre}}}{n^2}$, then for all $\rho \leq \|\gamma\|_t$ we have $R^*(\rho) \geq \eta$.
\end{lemma}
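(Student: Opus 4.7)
The strategy is the standard Le Cam / Ingster chi-squared two-point lower bound, invoked through Lemma \ref{lowerbound} with the uniform prior on $\mathcal{P}_1$. First I would check that the prior is admissible: since $\gamma_i \in [0, p_i]$ and (wlog from Section \ref{Results}) $p_i \leq 1/2$, each $p_i \pm \gamma_i$ lies in $[0,1]$, so $q_\delta \in \mathcal{P}_N$; moreover $\|p - q_\delta\|_t = \|\gamma\|_t \geq \rho$, so every element of $\mathcal{P}_1$ satisfies $\mathcal{H}_{1,\rho}$. The remainder of the proof reduces to bounding $\chi^2(\bar{\mathbb{P}} \,\|\, \mathbb{P}_p^{\otimes n})$, where $\bar{\mathbb{P}} := \mathbb{E}_{q \sim \mathcal{U}(\mathcal{P}_1)} \mathbb{P}_q^{\otimes n}$.

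The core step is the Ingster factorization: introducing an independent copy $\delta'$ of $\delta$, the iid-in-$i$ and product-in-$j$ structure of both null and alternative yields
$$\chi^2(\bar{\mathbb{P}} \,\|\, \mathbb{P}_p^{\otimes n}) + 1 = \mathbb{E}_{\delta, \delta'}\left[\prod_{j=1}^N \left(1 + \frac{\delta_j \delta'_j \gamma_j^2}{p_j(1 - p_j)}\right)^n\right],$$
once one carries out the routine one-coordinate Bernoulli calculation $\mathbb{E}_{X \sim \mathrm{Ber}(p_j)}\bigl[\mathbb{P}_{q_\delta}(X) \mathbb{P}_{q_{\delta'}}(X)/\mathbb{P}_p(X)^2\bigr] = 1 + \delta_j \delta'_j \gamma_j^2 / (p_j(1-p_j))$. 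Setting $a_j := \gamma_j^2 / (p_j(1-p_j))$ and noting that $\epsilon_j := \delta_j \delta'_j$ is a Rademacher variable independent across $j$, this collapses to $\prod_{j=1}^N \tfrac{1}{2}\bigl[(1+a_j)^n + (1-a_j)^n\bigr]$.

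Each factor is then bounded by expanding $\tfrac{1}{2}[(1+a)^n + (1-a)^n] = \sum_{k \text{ even}} \binom{n}{k} a^k$, using $\binom{n}{k} \leq n^k/k!$ to dominate this by $\cosh(na)$, and then applying $\cosh(x) \leq e^{x^2/2}$ to obtain the per-coordinate bound $e^{n^2 a_j^2/2}$. The wlog assumption $p_j \leq 1/2$ gives $1-p_j \geq 1/2$, hence $a_j^2 \leq 4 \gamma_j^4 / p_j^2$, and the fourth-moment hypothesis then yields
$$\chi^2(\bar{\mathbb{P}} \,\|\, \mathbb{P}_p^{\otimes n}) \leq \exp\left(2 n^2 \sum_{j=1}^N \frac{\gamma_j^4}{p_j^2}\right) - 1 \leq e^{2 \cL{\ref{lem:gammaquatre}}} - 1.$$
Choosing $\cL{\ref{lem:gammaquatre}}$ small enough that $e^{2\cL{\ref{lem:gammaquatre}}} \leq 1 + 4(1-\eta)^2$ --- for instance $\cL{\ref{lem:gammaquatre}} = \tfrac{1}{2}\log\bigl(1+4(1-\eta)^2\bigr)$ --- closes the argument via Lemma \ref{lowerbound}.

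The only non-mechanical step is the one-coordinate Bernoulli chi-squared calculation; the main care is to use $\gamma_j \leq p_j$ and $p_j \leq 1/2$ together to guarantee $a_j \in [0,1]$ (so $(1-a_j)^n \geq 0$ and the $\cosh$-type bound is valid) and to absorb the $(1-p_j)^{-1}$ factor into the absolute constant, matching the exponent exactly to the $\ell^4$-type hypothesis of the lemma.
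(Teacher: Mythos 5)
Your proposal is correct and follows essentially the same route as the paper's proof: a $\chi^2$ bound via Lemma \ref{lowerbound}, the Ingster-type factorization over an independent Rademacher copy $\delta'$ giving the per-coordinate factor $\tfrac{1}{2}\bigl[(1+a_j)^n+(1-a_j)^n\bigr]$ with $a_j=\gamma_j^2/(p_j(1-p_j))$, the $\cosh$ and $e^{x^2/2}$ bounds, and absorbing $(1-p_j)^{-1}$ via $p_j\leq 1/2$ into the choice of the constant. The only cosmetic difference is that you reach $\cosh(na_j)$ through the even-term binomial expansion with $\binom{n}{k}\leq n^k/k!$, whereas the paper uses $(1\pm a_j)^n\leq e^{\pm na_j}$ directly; both are valid and lead to the same constant condition.
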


\begin{proof}
We will use Lemma \ref{lowerbound} with $p$ and $\mathcal P_1$ defined as above.

\begin{itemize}
    \item We first compute $\mathbb P_q(X)$ for some realization $X \in \mathcal{G}$. Let $S = \sum_{i=1}^n X_i \in \{0,\dots, n\}^N$ and write $S=(s_1,\dots, s_N)$. We have that
    \begin{align*}
        \mathbb P_p(X) & = \prod_{i=1}^{N} p_i^{s_i}(1-p_i)^{n-s_i}
    \end{align*}
    \item We now compute $\mathbb{E}_{q \sim \mathcal{U}(\mathcal{P}_1)} \mathbb{P}_q(X)$: for any $(\delta_i)_i \in \{\pm1\}^{N}$, we define $q_\delta = p + (\delta_i \gamma_i)_{1 \leq i \leq N}$. Then we have:
    \begin{align*}
        \mathbb P_{q_\delta}(X) = \prod_{i=1}^{N} (p_i + \delta_i \gamma_i)^{s_i}(1-p_i - \delta_i \gamma_i)^{n-s_i}
    \end{align*}
    \end{itemize}
    Therefore we have:
    
    \begin{align*}
            &\frac{1}{|\mathcal{G}|}\sum_{X \in \mathcal{G}} \frac{\left(\mathbb{E}_{q \sim \mathcal{U}(\mathcal{P}_1)} \mathbb{P}_q(X) \right)^2}{\mathbb{P}_p(X)} = \frac{1}{|\mathcal{G}|} \; \sum_{X \in \mathcal{G}} \; \sum_{\delta, \delta'} \; \prod_{i=1}^{N}  \frac{(p_i + \delta_i \gamma_i)^{s_i}(1-p_i - \delta_i \gamma_i)^{n-s_i} }{p_i^{s_i}(1-p_i)^{n-s_i}}\\
        &  \;\;\;\;\;\;\;\;\;\;\;\;\;\;\;\;\;\;\;\;\;\;\;\;\;\;\;\;\;\;\;\;\;\;\;\;\;\;\;\;\;\;\;\;\times (p_i + \delta_i \gamma_i)^{s_i}(1-p_i - \delta_i \gamma_i)^{n-s_i}\\
            &  \\
        & = \frac{1}{|\mathcal{G}|} \sum_{\delta,\delta'} \; \prod_{i=1}^{N}\;\sum_{l = 0}^n \binom{n}{l} \left(p_i + (\delta_i + \delta'_i) \gamma_i + \frac{\delta_i \delta'_i \gamma^2_i}{p_i}\right)^l \left(1 - p_i - (\delta_i + \delta'_i) \gamma_i + \frac{\delta_i \delta'_i \gamma^2_i}{1 - p_i}\right)^{n-l}\\
        &  \\
                & = \frac{1}{|\mathcal{G}|} \sum_{\delta,\delta'} \; \prod_{i=1}^{N} \left(1 + \frac{\delta_i \delta'_i \gamma^2_i}{p_i(1 - p_i)} \right)^n  = \prod_{i=1}^{N} \left[\frac{1}{4} \sum_{\delta_i, \delta'_i \in \{\pm 1\}^n} \left(1 + \frac{\delta_i \delta'_i \gamma^2_i}{p_i(1 - p_i)} \right)^n\right] \\
        &  \\
        & = \prod_{i=1}^{N} \left[ \frac{1}{2}  \left(1 + \frac{ \gamma^2_i}{p_i(1 - p_i)} \right)^n + \frac{1}{2}  \left(1 - \frac{ \gamma^2_i}{p_i(1 - p_i)} \right)^n \right] \\
        &  \\
        & \leq \prod_{i=1}^{N} \left[ \frac{1}{2}  \exp\left(\frac{ n\gamma^2_i}{p_i(1 - p_i)} \right) + \frac{1}{2}  \exp\left(\frac{ -n\gamma^2_i}{p_i(1 - p_i)} \right) \right] \\
        &  \\
        & = \prod_{i=1}^{N} \cosh\left(\frac{ n\gamma^2_i}{p_i(1 - p_i)} \right)  \leq \exp \left( \sum_{i=1}^N \frac{ n^2\gamma^4_i}{2 p_i^2(1 - p_i)^2}\right) 
    \end{align*}
Note that
\begin{align*}
    \exp \left( \sum_{i=1}^N \frac{ n^2\gamma^4_i}{2 p_i^2(1 - p_i)^2}\right) & \leq 1 + 4(1 - \eta)^2\\
    \Longleftrightarrow \;\;\;\;\;\;\;\;\;\; & \sum_{i=1}^N \frac{ \gamma^4_i}{p_i^2(1 - p_i)^2} \leq \frac{2L^2_\eta}{n^2}
\end{align*}

\begin{equation}\label{gamma_quatre}
 \;\;\;\;\;\;\;\;\;\;\Longleftarrow \;\;\;\;\;\;\;\;\;\;\sum_{i=1}^N \frac{ \gamma^4_i}{p_i^2} \;\leq\; \frac{L^2_\eta}{2n^2}\tag{$\arraystretch$}
\end{equation}

where $\cA^4 := \log\left(1 + 4(1-\eta)^2 \right)$ and since $\forall \;i : p_i\leq \frac{1}{2}$. The result follows by Lemma \ref{lowerbound}.

\end{proof}

\hfill

 This means the following: let $\gamma := \left(\gamma_i \right)_i$ satisfying \eqref{gamma_quatre} and let $\rho = \norm{\gamma}_t$. Then all points $p +  (\delta_i \gamma_i)_{1 \leq i \leq |\mathcal G|} $ are located at a distance $\rho$ from $p$ in terms of $\ell_t$ norm - so that the corresponding adjacency matrices are at a distance $\rho$ from each other in $\ell_t$ norm. Moreover we proved that for the uniform prior on this set of points $\mathcal{P}_1$, we have $R^*(\rho) \geq \eta$, which yields $\rho^* \geq \rho$. 
 
\hfill\break


We now prove the lower bound by combining the following four lemmas.\\

\begin{lemma}\label{bulk}
It holds that
$$ \rho_t^* \gtrsim_\eta \rho_1 := \frac{ \norm{p_{\leq A}}_r^\frac{r}{t}}{\sqrt{n}\norm{p_{\leq I}}_r^\frac{r}{4}}.$$
\end{lemma}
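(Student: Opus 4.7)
The plan is to apply Lemma~\ref{lem:gammaquatre} with the prior construction from Equation~\eqref{Prior_bulk}, namely the perturbation vector $\gamma$ defined in \eqref{gamma}:
$$\gamma_i = \frac{c_\gamma \, p_i^{2/(4-t)}}{\sqrt{n}\,\|p_{\leq I}\|_r^{r/4}} \quad \text{for } i \leq A, \qquad \gamma_i = 0 \quad \text{for } i > A,$$
for a sufficiently small constant $c_\gamma$ depending only on $\eta$. The lemma then immediately yields $\rho_t^* \geq \|\gamma\|_t$, so it remains to verify (a) that this $\gamma$ is admissible, (b) that it satisfies the key constraint of Lemma~\ref{lem:gammaquatre}, and (c) that $\|\gamma\|_t \gtrsim_\eta \rho_1$.

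First I would check admissibility, i.e.\ that $p_i \pm \gamma_i \in [0,1]$ for all $i$. Since we assumed $p_i \leq 1/2$, it suffices to show $\gamma_i \leq p_i$. Rewriting, this inequality is equivalent to
$$p_i^{(2-t)/(4-t)} = p_i^{b/2} \geq \frac{c_\gamma}{\sqrt{n}\,\|p_{\leq I}\|_r^{r/4}},$$
which is exactly the defining inequality of the index $A$ in \eqref{def_A}, up to the choice of constant. Picking $c_\gamma \leq c_A$ ensures admissibility for every $i \leq A$.

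Next I would verify the main hypothesis $\sum_i \gamma_i^4/p_i^2 \leq \cL{\ref{lem:gammaquatre}}/n^2$. Using the identity $8/(4-t) - 2 = 2t/(4-t) = r$, a direct computation gives
$$\sum_{i=1}^N \frac{\gamma_i^4}{p_i^2} = \frac{c_\gamma^4}{n^2\,\|p_{\leq I}\|_r^r}\sum_{i \leq A} p_i^r = \frac{c_\gamma^4\,\|p_{\leq A}\|_r^r}{n^2\,\|p_{\leq I}\|_r^r} \leq \frac{c_\gamma^4}{n^2},$$
since $A \leq I$ implies $\|p_{\leq A}\|_r \leq \|p_{\leq I}\|_r$. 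Choosing $c_\gamma$ small enough so that $c_\gamma^4 \leq \cL{\ref{lem:gammaquatre}}$ yields the required bound.

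Finally I would compute $\|\gamma\|_t$ similarly, using $2t/(4-t) = r$:
$$\|\gamma\|_t^t = \frac{c_\gamma^t}{n^{t/2}\,\|p_{\leq I}\|_r^{rt/4}}\sum_{i \leq A}p_i^r = \frac{c_\gamma^t\,\|p_{\leq A}\|_r^r}{n^{t/2}\,\|p_{\leq I}\|_r^{rt/4}},$$
so that $\|\gamma\|_t = c_\gamma\,\|p_{\leq A}\|_r^{r/t}/\bigl(\sqrt{n}\,\|p_{\leq I}\|_r^{r/4}\bigr) = c_\gamma\,\rho_1$. Applying Lemma~\ref{lem:gammaquatre} concludes the proof. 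There is no substantial obstacle here: the whole argument reduces to Lemma~\ref{lem:gammaquatre}, and the calculations are routine once one observes that the exponent $2/(4-t)$ in the definition of $\gamma_i$ is precisely chosen so that $\gamma_i^4/p_i^2$ and $\gamma_i^t$ both reduce cleanly to powers of $p_i^r$; the definition of $A$ is exactly what is needed to ensure admissibility of the prior.
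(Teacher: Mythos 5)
Your proposal is correct and follows essentially the same route as the paper: the same prior $p_i \pm \gamma_i$ with $\gamma_i \propto p_i^{2/(4-t)}/\bigl(\sqrt{n}\,\norm{p_{\leq I}}_r^{r/4}\bigr)$ on the first $A$ coordinates, verified against Lemma~\ref{lem:gammaquatre} via the exponent identity $\tfrac{8}{4-t}-2=r$, and the computation $\norm{\gamma}_t \asymp_\eta \rho_1$. Your explicit admissibility check $\gamma_i \leq p_i$ is precisely the role the definition of $A$ in \eqref{def_A} plays (left implicit in the paper's proof and spelled out in its remark on index $A$), so the two arguments coincide.
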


\begin{proof}[Proof of Lemma \ref{bulk}]
For a small enough constant $\cA$ depending only on $\eta$, we then define the constant 
\begin{equation}\label{def_a}
    a = \frac{\cA}{\sqrt{n}\left(\sum_{i\leq I} p_i^r \right)^\frac{1}{4}}
\end{equation}
For all $\delta \in \{\pm 1\}^A$ let $q_\delta = ((q_{\delta})_i)_{i = 1, \cdots, N}$ such that 
\begin{itemize}
    \item $\forall i \leq A \;,~~~~(q_{\delta})_i = p_i + a \delta_i p_i^\frac{2}{4-t}$ where $a$ is defined in \eqref{def_a}
    \item $\forall i>A \;,~~~ (q_{\delta})_i = p_i$.
\end{itemize}

Let $\mathcal{P}_1 = \left\{ q_\delta \; | \; \delta \in \{\pm 1\}^A \right\}$. We set a uniform prior on $\mathcal{P}_1$. With the notation of Lemma \ref{lem:gammaquatre}, we just set $\gamma_i = a p_i^\frac{2}{4-t}$ if $i \leq A$ and $0$ otherwise. In terms of $\norm{\cdot}_t$ norm, any probability matrix where this prior puts mass is separated from $p$ with a distance $\rho$ such that:
\begin{align*}
    \rho & = a \left\|\left(p_i^\frac{2}{4-t} \right)_{i=1, \cdots, A}\right\|_t = \frac{\cA}{\sqrt{n}\left(\sum_{i\leq I} p_i^r \right)^\frac{1}{4}} \left(\sum_{i\leq A} p_i^r\right)^\frac{1}{t} \asymp_\eta \frac{\norm{p_{\leq A}}_r^\frac{r}{t}}{\sqrt{n}\norm{p_{\leq I}}_r^\frac{r}{4}} = \rho_1.
\end{align*}

According to Lemma \ref{lem:gammaquatre}, taking $\cA^4 \leq \cL{\ref{lem:gammaquatre}}$ this prior gives a minimax risk greater than $\eta$ since 
 \begin{align*}
     \sum_{i\leq A} \frac{ \gamma^4_i}{p_i^2} & = a^4 \sum_{i \leq A} p_i^{\frac{8}{4-t} - 2} = \frac{\cA^4}{n^2} \leq \frac{\cL{\ref{lem:gammaquatre}}}{n^2}.
\end{align*}
\end{proof}

\begin{lemma}\label{tail}
Assume that $\norm{p_{> I}}_1 \geq \frac{1}{n}$. Then it holds that
$$ \rho_t^* \gtrsim_\eta \rho_2:=\frac{\norm{p_{\geq I}}_1^{\frac{2-t}{t}}}{n^\frac{2t-2}{t}}.$$
\end{lemma}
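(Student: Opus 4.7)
The plan is to apply Lemma~\ref{lowerbound} to the mixture prior $\bar{\mathbb P}_{\mathrm{tail}}$ defined in Equation~\eqref{prior_tail} and then invoke Lemma~\ref{I_and_U} to swap $U$ for $I$ in the resulting bound.

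First I would carefully set up the construction: take $U$ to be the smallest index exceeding $I$ with $n^2 p_U \|p_{\geq U}\|_1 \leq c_u$ for a small constant $c_u \in (0,1)$ depending only on $\eta$, set $\bar\pi = c_u/(n^2\|p_{\geq U}\|_1)$ and $\pi_i = p_i/\bar\pi$. By the minimality of $U$ and the monotonicity of $p$, $\pi_i \in [0,1]$ for every $i \geq U$, so each $q_b(i) = b_i \bar\pi$ is a valid Bernoulli parameter. The assumption $\|p_{>I}\|_1 \geq 1/n$ guarantees that $U$ is finite and will eventually let me invoke Lemma~\ref{I_and_U} to replace $\|p_{\geq U}\|_1$ by $\|p_{\geq I}\|_1$ up to an $\eta$-dependent constant.

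Second, I would lower bound the $\ell_t$ separation. By construction $\mathbb E[q_b(i)] = \pi_i \bar\pi = p_i$, while $|q_b(i) - p_i|$ equals $p_i$ with probability $1-\pi_i$ and $\bar\pi - p_i$ with probability $\pi_i$. Since $p_i \leq \bar\pi$ for $i\geq U$,
\[\mathbb E\,|q_b(i)-p_i|^t \;\asymp\; \pi_i \bar\pi^t \;=\; p_i \bar\pi^{t-1},\]
and summing over $i\geq U$ together with $\bar\pi \asymp 1/(n^2\|p_{\geq U}\|_1)$ yields
\[\mathbb E \|q_b - p\|_t^t \;\asymp\; \bar\pi^{t-1} \|p_{\geq U}\|_1 \;\asymp\; \frac{\|p_{\geq U}\|_1^{\,2-t}}{n^{\,2(t-1)}}.\]
Because the summands are independent and uniformly bounded by $\bar\pi^t$, a Chebyshev bound (using $\bar\pi \ll \|p_{\geq U}\|_1$, itself a consequence of $\|p_{\geq U}\|_1 \gtrsim 1/n$) shows concentration of $\|q_b-p\|_t^t$ around its mean on a prior event $\mathcal E$ of mass at least $1 - \eta/4$.

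Third, I would bound the chi-squared divergence by coordinate-wise factorization. By independence across $i \geq U$,
\[\chi^2(\bar{\mathbb P}_{\mathrm{tail}}, \mathbb P_p) + 1 \;=\; \prod_{i \geq U} \mathbb E_{b_i,b'_i}\bigl[M_i(b_i,b'_i)^n\bigr],\]
where $M_i(b,b') = \sum_{x \in \{0,1\}} q_{b}(x) q_{b'}(x)/p_i(x)$. A direct case analysis gives $M_i(0,0) = 1/(1-p_i)$, $M_i(0,1)=M_i(1,0) = (1-\bar\pi)/(1-p_i)$ and $M_i(1,1) = (1-\bar\pi)^2/(1-p_i) + \bar\pi^2/p_i$. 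Thanks to the first-moment matching $\pi_i\bar\pi = p_i$, the $O(\bar\pi)$ contributions cancel when one expands $\mathbb E[M_i^n]$, leaving $\mathbb E[M_i^n] \leq 1 + O(n^2 p_i \bar\pi)$. Summing in the logarithm and using $\bar\pi \asymp 1/(n^2\|p_{\geq U}\|_1)$, the total contribution in the exponent is $\sum_{i\geq U} n^2 p_i \bar\pi \asymp c_u$, which can be made $\leq 4(1-\eta)^2$ by choosing $c_u$ small enough.

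Finally I would conclude: restricting the prior to $\mathcal E$ (which only worsens the chi-squared bound by a controlled factor), Lemma~\ref{lowerbound} yields $\rho^*_t \gtrsim_\eta \|p_{\geq U}\|_1^{(2-t)/t}/n^{2(t-1)/t}$, and Lemma~\ref{I_and_U} then allows us to replace $\|p_{\geq U}\|_1$ by $\|p_{\geq I}\|_1$, giving $\rho^*_t \gtrsim_\eta \rho_2$. The main technical obstacle will be the chi-squared computation: the brute expansion of $\mathbb E[M_i^n]$ generates potentially dangerous terms of order $n\bar\pi/(1-p_i)$, and one must track carefully the cancellations induced by the matched first moment $\pi_i\bar\pi = p_i$ to reduce the net per-coordinate contribution to the benign second-moment quantity $n^2 p_i\bar\pi$, whose total sum is controlled by the definition of $U$.
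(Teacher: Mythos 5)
Your setup (the sparse prior at level $\bar\pi$ with $\pi_i = p_i/\bar\pi$, the separation step via mean/variance of $\norm{q_b-p}_t^t$ and Chebyshev, and the final swap of $U$ for $I$ via Lemma~\ref{I_and_U}) matches the paper. The genuine gap is in your third step: the full chi-squared divergence between the mixture $\bar{\mathbb P}_{\mathrm{tail}}$ and $\mathbb P_p$ is \emph{not} bounded, and the claimed estimate $\mathbb E[M_i(b_i,b_i')^n] \leq 1 + O(n^2 p_i \bar\pi)$ is false. Writing $M_i(b,b') = 1 + \frac{(b\bar\pi - p_i)(b'\bar\pi - p_i)}{p_i(1-p_i)}$, the first-moment matching $\pi_i\bar\pi = p_i$ only kills the term linear in $u_{bb'}$; the collision term $b_i=b_i'=1$ contributes $\pi_i^2\big(1+\tfrac{(\bar\pi-p_i)^2}{p_i(1-p_i)}\big)^n \approx \big(\tfrac{p_i}{\bar\pi}\big)^2 \exp\big(\tfrac{n\bar\pi^2}{p_i}\big)$, which blows up as $p_i \to 0$ (already a single very small tail entry with $n\geq 3$ makes the product diverge). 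Even in the regime $n\bar\pi^2/p_i \ll 1$ the surviving quadratic term is of order $n^2\bar\pi^2$ \emph{per coordinate}, not $n^2 p_i\bar\pi$, so the product over a tail with many tiny entries is $\exp(\Theta(n^2\bar\pi^2\,|\{i\geq U\}|))$, again unbounded. Restricting the prior to your event $\mathcal E$ does not help, since $\mathcal E$ concerns the separation of $q_b$, not the data; the divergence blows up because of data realizations in which some tail coordinate is observed at least twice, which are vastly more likely under a spiked $q_b$ than under $p$.

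This is exactly the point where the paper departs from a plain application of Lemma~\ref{lowerbound}: it bounds $d_{TV}(\mathbb P_p, \bar{\mathbb P}_{\mathrm{tail}})$ directly after splitting on the event $\{\forall i\geq U:\ s_i\leq 1\}$. The complement has probability at most $\sum_{i\geq U} n^2p_i^2 \leq c_I$ under $\mathbb P_p$ and at most $\sum_{i\geq U} n(n-1)p_i\bar\pi \leq c_u$ under the mixture, and only on the restricted event does one apply Cauchy--Schwarz and compute $\sum_{k=0}^{1} \bar{\mathbb P}_{\mathrm{tail}}(s_i=k)^2/\mathbb P_p(s_i=k) \leq 1 + n^2 p_i\bar\pi + O(c_u^2)$, where the dangerous $\bar\pi^2/p_i$ terms (which live on $\{s_i\geq 2\}$) never appear. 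Without this truncation of the sample space your argument cannot be closed, so as written the proposal has a genuine gap; incorporating the truncated second-moment computation would essentially reproduce the paper's proof. (A minor additional point: Lemma~\ref{lowerbound} is stated for a uniform prior over a finite set, whereas your prior is a non-uniform product of Bernoullis; this is easily fixed, unlike the divergence issue.)
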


\begin{proof}[Proof of Lemma \ref{tail}]
We divide the proof in two steps. In the first step, we prove that the prior concentrates with high probability on a zone located at $\frac{\|p_{\geq U}\|_1^{(2-t)/t}}{n^{(2t-2)/t}} + \frac{1}{n}$, up to a multiplicative constant. In the second step, we prove that the prior is indistinguishable from the null hypothesis $p$, by proving that the total variation between $p$ and this prior is small.\\

\textbf{\underline{FIRST STEP}}: We prove that the prior concentrates with high probability on a zone located at $\frac{\|p_{\geq U}\|_1^{(2-t)/t}}{n^{(2t-2)/t}} + \frac{1}{n}$, up to a multiplicative constant. By assumption we have $\norm{p_{> I}}_1 \geq \frac{1}{n}$.

Let $U$ be the smallest index greater than or equal to $A$ such that $n^2 p_U \norm{p_{\geq U}}_1 \leq c_u$ where $c_u= \frac{\eta}{10} \wedge \frac{1}{2}(1-\eta)^2$ \\

Let 
$$ \bar \pi = \frac{c_u}{ n^2\norm{p_{\geq U}}_1} \text{ and } \pi_i = \frac{p_i}{\bar \pi}.$$\\

We set the following sparse prior on the matrices of connection probability: for all $i<U$ we set $q_i = p_i$ and for all $ i \geq U$ we draw $b_i \sim \mathcal B (\pi_i)$ mutually independent, and we define $q_i = b_i \bar \pi$. We write $q = (q_i)_i$ and $q$ for the corresponding random connection probability matrix - for which we write $\mathcal Q$ for the distribution.\\

Before showing that the data distribution coming from this prior - namely $\mathbb E_{q\sim \mathcal Q} \mathbb P_q$ - is close enough to $\mathbb P_\pi$ in total variation, we first prove that $q\sim \mathcal Q$ is such that $\|q - p\|_t$ is with high probability larger - up to a positive multiplicative constant that depends only on $u$ - than $\rho_2$ from the null. 
We have 
\begin{align*}
    \mathbb{E}_{q \sim \mathcal Q}\left[ \norm{p - q}_t^t  \right]&= \mathbb{E}_{(b_i)_i \sim   \otimes \mathcal B(\pi_i)}\left[\sum_{i \geq U} \abs{b_i \bar \pi - p_i}^t \right]\\
    &= \bar \pi^t \mathbb{E}_{(b_i)_i \sim  \otimes \mathcal B(\pi_i)}\left[\sum_{i\geq U} \abs{b_i - \pi_i}^t \right]\\ 
    &= \bar \pi^t \sum_{i\geq U} \pi_i(1 - \pi_i)^t + (1- \pi_i)\pi_i^t \geq  4^{-1}\bar \pi^t \sum_{i \geq U} \pi_i + \pi_i^t \;  \\
    &\geq  \;  4^{-1}\bar \pi^t \sum_{i \geq U} \pi_i,
\end{align*} 
since $\forall i\geq U, \; \pi_i \leq c_u \leq \frac{1}{2},$ and 
\begin{align*}
    \mathbb{V}_{q \sim \mathcal Q}\left[\; \norm{p - q}_t^t \; \right]& = \bar \pi^{2t}\sum_{i\geq U}\mathbb{V}_{b_i \sim \mathcal B(\pi_i)}\abs{b_i - \pi_i}^t = \bar \pi^{2t}\sum_{i \geq U} \pi_i(1-\pi_i)\left[(1-\pi_i)^t - \pi_i^t\right]^2 \\ 
&\leq \bar \pi^{2t} \sum_{i \geq U} \pi_i.
\end{align*}

We now show that $\left[\mathbb{E}_{q \sim \mathcal Q}\left[ \norm{p - q}_t^t  \right]\right]^2 \gg \mathbb{V}_{q \sim \mathcal Q}\left[ \norm{p - q}_t^t  \right]$. This is equivalent to proving $\sum_{i \geq U} \pi_i \gg 1$, or equivalently: $n^2 \norm{p_{\geq U}}_1^2 \gg c_u$.\\

By Lemma \ref{I_and_U}, we are necessarily in the case $\norm{p_{\geq U}}_1 \geq \frac{1}{3} \norm{p_{>I}}_1$. Indeed, suppose that $\norm{p_{\geq U}}_1 < \frac{1}{3} \norm{p_{>I}}_1$, then by Lemma \ref{I_and_U} we would have 
\begin{align*}
    \norm{p_{>I}}_1 &\leq  \norm{p_{\geq U}}_1 + \frac{\sqrt{\cI}}{n}\\ &\leq \frac{1}{3} \norm{p_{>I}}_1 + \frac{\sqrt{\cI}}{n},
\end{align*} hence $\norm{p_{>I}}_1 \leq   \frac{3}{2}\frac{\sqrt{\cI}}{n}$, which is excluded because we assume $ \norm{p_{>I}}_1 \geq \frac{1}{n}$.
    
Therefore, $\norm{p_{\geq U}}_1^2 n^2 \geq \frac{1}{9} \gg c_u$. We conclude using Chebyshev's inequality. Therefore, this prior is indeed separated away from the null distribution by a distance greater than $\bar \pi \sum_{i \geq U} \pi_i$ up to a constant, or equivalently, greater than $\frac{\norm{p_{\geq U}}_1^{\frac{2-t}{t}}}{n^{\frac{2(t-1)}{t}}}$.\\

\textbf{\underline{SECOND STEP}}: We now show that this prior is indistinguishable from $p$, i.e. that that is has a bayesian risk strictly greater than $eta$. 
We write $\bar{\mathbb P}_{\mathrm{tail}} = \mathbb{E}_{q \sim \mathcal Q}\left[\mathbb{P}_q \right]$, the prior distribution used to lower bound the minimax risk. We always have:

\begin{align*}
    R^* & \geq 1 - d_{TV}\left(\mathbb P_p, \bar{\mathbb P}_{\mathrm{tail}} \right).
\end{align*}

Moreover, we recall that for any realization $X = (X_1, \dots, X_n)$ we write $S = \sum_{i=1}^n X_i$. We have
\begin{align*}
    & d_{TV}\left(\mathbb P_p, \bar{\mathbb P}_{\mathrm{tail}} \right)= \frac{1}{2}\sum_{X \in \mathcal G}\abs{\mathbb P_p(X) - \bar{\mathbb P}_{\mathrm{tail}}(X)}\\
    & = \frac{1}{2}\sum_{X \in \mathcal G: \forall i \geq U, s_i\leq 1}\abs{\mathbb P_p(X) - \bar{\mathbb P}_{\mathrm{tail}}(X)} \;\; + \;\; \frac{1}{2}\sum_{X \in \mathcal G: \exists i \geq U,~\mathrm{s.t.}~s_i \geq 2}\abs{\mathbb P_p(X) - \bar{\mathbb P}_{\mathrm{tail}}(X)}.
\end{align*}

This allows us to split the total variation into two terms: The first one will be the principal term, while the second one will be negligible. We first prove the negligibility of the second term.

We have - since $s$ is a sufficient statistic
\begin{align*}
    & \sum_{X \in \mathcal G: \exists i \geq U,~\mathrm{s.t.}~s_i \geq 2}\abs{\mathbb P_p(X) - \bar{\mathbb P}_{\mathrm{tail}}(X)} \leq  \left[ \mathbb P_p\left(\exists i \geq U \; ; \; s_i \geq 2 \right) + \bar{\mathbb P}_{\mathrm{tail}}\left(\exists i \geq U \; ; \; s_i \geq 2 \right) \right]\\
    & \leq \sum_{i = U}^{|\mathcal G|} \left[1 - \mathbb P_p( s_i = 0)- \mathbb P_p(s_i = 1) + 1 - \bar{\mathbb P}_{\mathrm{tail}}( s_i = 0)- \bar{\mathbb P}_{\mathrm{tail}}(s_i = 1)\right].
\end{align*}

Let's fix $i \in \{U, \cdots, N\}$. We will use the following inequalities which hold for all $ n \in \mathbb{N}, x \in [0,1]$:

\begin{align*}
    (1-x)^n \geq 1-nx ; \;\;\;\;\; (1-x)^n \geq 1-nx + \frac{n}{4}x^2 ; \;\;\;\;\; (1-x)^n \leq 1-nx + \frac{n^2}{2}x^2.
\end{align*}

\paragraph{First term in the sum: $\sum_{i = U}^{N} [1 - \mathbb P_p( s_i = 0)- \mathbb P_p(s_i = 1)]$.} We recall that by the definition of $U$ we have $\forall i \geq U \; n p_i \leq c_u$ so that for any $i \geq U$
\begin{align*}
    &   1 - \mathbb P_p( s_i = 0)- \mathbb P_p(s_i = 1)  \;\; = \;\; 1 - (1 - p_i)^n - n p_i (1-p_i)^{n-1}\\
    &\\
    & \leq  1 - \left[1 - np_i + \frac{n}{4}p_i^2 \right] - np_i\left[ 1 - (n-1)p_i \right] \;\; \leq \;\; n^2 p_i^2.
\end{align*}
Summing over all $i = U, \cdots,N$ yields that 
$$ \sum_{i = U}^{N}  [1 - \mathbb P_p( s_i = 0)- \mathbb P_p(s_i = 1)]  \leq  \cI.$$

\paragraph{Second term in the sum: $\sum_{i = U}^{N} [1 - \bar{\mathbb P}_{\mathrm{tail}}( s_i = 0)- \bar{\mathbb P}_{\mathrm{tail}}(s_i = 1)]$.} We recall that by the definition of $U$ we have $\forall i \geq U \; n p_i \leq c_u$ so that for any $i \geq U$
\begin{align*}
    & 1 - \bar{\mathbb P}_{\mathrm{tail}}(s_i = 0) - \bar{\mathbb P}_{\mathrm{tail}}(s_i = 1)\;\; = \;\; 1 - \left[ 1-\pi_i + \pi_i(1 - \bar \pi)^n \right] -  \pi_i n \bar \pi (1-\bar \pi)^{n-1}\\
    & = \pi_i - \pi_i(1-\bar \pi)^n - \pi_i n \bar \pi (1-\bar \pi)^{n-1} \leq \pi_i - \pi_i(1-n\bar \pi) - \pi_i n \bar \pi (1-(n-1)\bar \pi)\\
    & = n(n--1) \pi_i \bar \pi^2 = n(n-1) p_i \bar \pi \leq n^2 c_u \frac{p_i}{n^2\|p_{\geq U}\|_1} = c_u \frac{p_i}{\|p_{\geq U}\|_1}
\end{align*}

Summing over all $i = U, \cdots, N$ yields that
$$ \sum_{i = U}^{N} [1 - \bar{\mathbb P}_{\mathrm{tail}}(s_i = 0) - \bar{\mathbb P}_{\mathrm{tail}}(s_i = 1)]  \leq c_u \frac{\norm{p_{\geq U}}_1}{\norm{p_{\geq U}}_1} = c_u.$$

Therefore 
\begin{equation}\label{TV}
    d_{TV}\left(\mathbb P_p, \bar{\mathbb P}_{\mathrm{tail}} \right) = \underbrace{\frac{1}{2}\sum_{X \in \mathcal G: \forall i \geq U, s_i\leq 2}\abs{\mathbb P_p(X) - \bar{\mathbb P}_{\mathrm{tail}}(X)} }_{\text{principal term}} + \cI + c_u
\end{equation}

\hfill \break

Now, we can upper bound the total variation by the $\chi^2$ divergence on the high probability event that we only observe $0$ or $1$ for each coordinate $i \geq U$ corresponding to the principal term. We have - since $s$ is a sufficient statistic
\begin{align}
    & \sum_{X \in \mathcal G: \forall i \geq U, s_i\leq 1}\abs{\mathbb P_p(X) - \bar{\mathbb P}_{\mathrm{tail}}(X)} \\
    \leq &\sqrt{\sum_{X \in \mathcal G: \forall i \geq U, s_i\leq 1}\frac{\left(\mathbb P_p(X) - \bar{\mathbb P}_{\mathrm{tail}}(X)\right)^2}{\mathbb P_p(X)}}\sqrt{\underbrace{\sum_{X \in \mathcal G: \forall i \geq U, s_i\leq 1}\mathbb P_p(X)}_{\leq 1}} \nonumber\\
    & \leq \sqrt{\sum_{X \in \mathcal G: \forall i \geq U, s_i\leq 1} \frac{ \bar{\mathbb P}_{\mathrm{tail}}(X)^2}{\mathbb P_p(X) } - 1 + 2c_u} = \sqrt{\prod_{i=U}^N \left(\sum_{j=0}^1 \frac{ \bar{\mathbb P}_{\mathrm{tail}}(s_i = j)^2}{\mathbb P_p(s_i = j) } \right) - 1 + 2c_u}.\label{approx_chi2}
\end{align}

\paragraph{Computation of $\sum_{k=0}^1 \frac{ \bar{\mathbb P}_{\mathrm{tail}}(s_i = k)^2}{\mathbb P(s_i = k) }$.}:

\begin{align*}
    & \sum_{k=0}^1 \frac{ \bar{\mathbb P}_{\mathrm{tail}}(s_i = k)^2}{\mathbb P_p(s_i = k) }  =  \;\; \frac{\left[1 - \pi_i + \pi_i(1-\bar \pi)^n\right]^2}{(1-p_i)^n} \;\; + \;\; \frac{\left[\pi_i n \bar \pi(1- \bar \pi)^{n-1} \right]^2}{np_i(1-p_i)^{n-1}}
\end{align*}

The first term writes:

\begin{align*}
    & \frac{\left[1 - \pi_i + \pi_i(1-\bar \pi)^n\right]^2}{(1-p_i)^n} \;\; \leq \;\;  \frac{\left[1 - \pi_i + \pi_i(1-n \bar \pi + \frac{n^2}{2} \bar \pi^2)\right]^2}{1-np_i}\\
    &  = \;\; 1 - np_i + n^2p_i \bar \pi + \frac{\left(\frac{n^2}{2}p_i \bar \pi \right)^2}{1-np_i} \leq 1 - np_i + n^2p_i \bar \pi + \frac{n^4p_i^2 \bar \pi^2}{4(1-\cI)} \;\; \\
    & \leq \;\; 1 - np_i + n^2p_i \bar \pi + \frac{c_u^2}{4(1-\cI)}.
\end{align*}

The second term writes:

\begin{align*}
    & \frac{\left[\pi_i n \bar \pi(1-\bar \pi)^{n-1} \right]^2}{np_i(1-p_i)^{n-1}} \;\;\; = \;\;\; np_i \frac{(1-\bar \pi)^{2n-2}}{(1-p_i)^{n-1}} \;\;\; \leq \;\;\; n p_i \;\;\; \text{ since } \bar \pi \geq p_i
\end{align*}

\hfill

We can now sum the two terms:
$$ \sum_{k=0}^1 \frac{ \bar{\mathbb P}_{\mathrm{tail}}(s_i = k)^2}{\mathbb P_p(s_i = k) }  = 1 + n^2p_i \bar \pi + \frac{c_u^2}{4(1-\cI)}$$
So that
\begin{align*}
    &\prod_{i=U}^N \left(\sum_{k=0}^1 \frac{ \bar{\mathbb P}_{\mathrm{tail}}(s_i = k)^2}{\mathbb P_p(s_i = k) } \right) =  \prod_{k=U}^N \left(1 + n^2p_i  \bar \pi + \frac{c_u^2}{4(1-\cI)} \right) \\
    & = \exp{\left(c_u + \frac{c_u^2}{1-\cI} \right)} \leq \exp{\frac{3}{2}c_u} \leq 1 + 3c_u \;\;\; \text{ since } \frac{3}{2}c_u \leq 1.
\end{align*}

Now, using (\ref{TV}) and (\ref{approx_chi2}), we have: $d_{TV}(\mathbb P_p,\bar{\mathbb P}_{\mathrm{tail}}) \leq \frac{1}{2}\sqrt{5c_u} + \cI + c_u \leq 1 - \eta$ by the definition of $c_u, \cI$. This concludes the proof.

\end{proof}

\begin{lemma}\label{1_over_n}
Assume that $\norm{p_{\geq I}}_1 \leq \frac{1}{n}$. Then it holds that
$$ \rho_t^* \gtrsim \rho_3 := \frac{1}{n}.$$
\end{lemma}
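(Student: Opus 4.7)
The plan is to establish this trivial $1/n$ lower bound by Le Cam's two-point method, perturbing $p$ by $c/n$ on a single coordinate, where $c = c(\eta) > 0$ is a small constant to be chosen. Specifically, pick any coordinate $j^*$ (say $j^* = 1$) and define $q \in \mathcal{P}_N$ by $q_{j^*} = p_{j^*} + c/n$ and $q_i = p_i$ for $i \neq j^*$. Thanks to the \emph{wlog} assumption $p_{j^*} \leq 1/2$, the value $q_{j^*}$ lies in $[0,1]$ provided $c$ is small, so $q \in \mathcal{P}_N$ and $\|p - q\|_t = c/n$; in particular $q$ lies in $\mathcal{H}_1^{\rho, p, t}$ for $\rho = c/n$.

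The rest is a standard Hellinger tensorization argument. Since $\mathbb{P}_p$ and $\mathbb{P}_q$ differ only on the $j^*$-th coordinate, the product structure gives
\[
H^2\bigl(\mathbb{P}_p^{\otimes n}, \mathbb{P}_q^{\otimes n}\bigr) \;\leq\; n\, H^2\bigl(\mathrm{Ber}(p_{j^*}), \mathrm{Ber}(q_{j^*})\bigr).
\]
Using the elementary inequality $(\sqrt{a+h} - \sqrt{a})^2 \leq h$ applied both to the Bernoulli parameters and to their complements yields the uniform bound $H^2(\mathrm{Ber}(a), \mathrm{Ber}(a+h)) \leq 2h$ on $a, h \in [0, 1/2]$, so that $H^2(\mathbb{P}_p^{\otimes n}, \mathbb{P}_q^{\otimes n}) \leq 2c$. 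Le Cam's inequality then gives
\[
R^*(c/n) \;\geq\; 1 - d_{TV}\bigl(\mathbb{P}_p^{\otimes n}, \mathbb{P}_q^{\otimes n}\bigr) \;\geq\; 1 - H \;\geq\; 1 - \sqrt{2c},
\]
and choosing $c \leq (1-\eta)^2/2$ produces $R^*(c/n) \geq \eta$. By the definition of $\rho^*$ this proves $\rho^* \geq c/n$, i.e.\ $\rho^* \gtrsim_\eta 1/n$ as required.

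The argument does not actually use the hypothesis $\|p_{\geq I}\|_1 \leq 1/n$; I suspect this assumption is imposed only so that Lemmas~\ref{bulk}, \ref{tail}, and \ref{1_over_n} cleanly partition the possible regimes when combined. The only mildly delicate point, and the reason I would avoid invoking Lemma~\ref{lem:gammaquatre} directly here, is that the quantity $\gamma_i^4/p_i^2$ appearing there blows up at coordinates where $p_i$ is tiny compared to $1/n$ (in particular at $p_i = 0$). Working directly with Hellinger distance sidesteps this issue and delivers the bound uniformly in $p$.
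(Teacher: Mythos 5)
Your proposal is correct and is essentially the paper's own argument: the paper likewise takes a two-point construction perturbing a single coordinate by $(1-\eta)/n$ and lower bounds the risk by $1-d_{TV}(\mathbb{P}_p,\mathbb{P}_q)$, only it controls the $n$-fold product via subadditivity of total variation (the per-sample TV being exactly $|p_1-q_1|$) rather than via Hellinger tensorization, and, as you note, the hypothesis $\|p_{\geq I}\|_1\leq 1/n$ plays no role in either version beyond delimiting the regimes of the surrounding lemmas. Your Hellinger route is a harmless (and arguably slightly cleaner) variant of the same idea.
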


\begin{proof}[Proof of Lemma \ref{1_over_n}]
We introduce $q$ such that $q_1 = p_1 + \frac{1-\eta}{n}$ and $q_j = p_j$ for all $j \geq 2$.

\begin{align*}
    R^* &\geq \inf_{\psi \textbf{test}}\mathbb P_p(\psi = 1) + \mathbb P_{q}(\psi = 0) \;\; = \;\; 1 - d_{TV}(\mathbb P_p, \mathbb P_{q})\\
    &= \;\; 1 - n \;d_{TV}\left(\underset{i<j}{\bigotimes} \mathcal{B}(p_i),\underset{i<j}{\bigotimes} \mathcal{B}(q_i)\right)\\
    & = 1 - n\; d_{TV}\left(\,\mathcal{B}(p_1),\; \mathcal{B}(q_1)\,\right) = 1 - n\; \abs{p_1 - q_1} = 1 - n\; \frac{1-\eta}{n}\\
    & = \eta.
\end{align*}
This concludes the proof.
\end{proof}

\hfill

\begin{lemma}\label{I_and_U} It holds : $\norm{p_{\geq U}}_1 + \frac{1}{n} \asymp \norm{p_{>I}}_1 + \frac{1}{n}$.\\ Moreover, we have either $\norm{p_{\geq U}}_1 \geq \frac{1}{3}\norm{p_{>I}}_1$ or $\norm{p_{>I}}_1 \leq \norm{p_{\geq U}}_1 + \frac{\sqrt{\cI}}{n}$
\end{lemma}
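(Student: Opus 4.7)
The plan is to prove the dichotomy first, and then derive the equivalence from it together with a few direct comparisons.

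For the dichotomy, I split on whether $U \leq I$ or $U > I$. If $U \leq I$, then $\{i : i \geq U\} \supseteq \{i : i > I\}$, so $\|p_{\geq U}\|_1 \geq \|p_{>I}\|_1 \geq (1/3)\|p_{>I}\|_1$ and the first disjunct holds trivially. If $U > I$, set $m := \|p_{>I}\|_1 - \|p_{\geq U}\|_1 = \sum_{I < i < U} p_i$. When $U = I+1$ the sum is empty, and when $U = I+2$ it reduces to $p_{I+1} \leq \sqrt{\cI}/n$ (since $p_{I+1}^2 \leq \sum_{j>I} p_j^2 \leq \cI/n^2$), so in both sub-cases the second disjunct holds directly. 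For $U \geq I+3$, I combine two estimates on $m$. By the minimality of $U$, every index $i$ with $A \leq i < U$ (in particular every $I < i < U$, since $A \leq I$) satisfies $n^2 p_i \|p_{\geq i}\|_1 > c_u$; since $\|p_{\geq i}\|_1 \leq \|p_{>I}\|_1$ for $i > I$, this gives $p_i > c_u/(n^2 \|p_{>I}\|_1)$, and summing over $I < i < U$ yields $(U-I-1) c_u/(n^2 \|p_{>I}\|_1) < m$. On the other hand, Cauchy--Schwarz combined with the defining property of $I$ gives
\[ m^2 \leq (U-I-1) \sum_{I < i < U} p_i^2 \leq (U-I-1)\, \cI/n^2. \]
Combining the two bounds produces $m \leq (\cI/c_u)\|p_{>I}\|_1$, and choosing $\cI$ small enough (depending on $\eta$) so that $\cI/c_u \leq 2/3$ gives $m \leq (2/3)\|p_{>I}\|_1$, hence the first disjunct.

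For the equivalence, the lower bound $\|p_{\geq U}\|_1 + 1/n \gtrsim_\eta \|p_{>I}\|_1 + 1/n$ is immediate from the dichotomy, as each of its two disjuncts yields the desired comparison up to a constant depending on $\eta$. The upper bound $\|p_{\geq U}\|_1 + 1/n \lesssim_\eta \|p_{>I}\|_1 + 1/n$ is trivial when $U > I$, because then $\|p_{\geq U}\|_1 \leq \|p_{>I}\|_1$. When $U \leq I$, I would decompose $\|p_{\geq U}\|_1 = v + \|p_{>I}\|_1$ with $v := \sum_{U \leq i \leq I} p_i$, apply Cauchy--Schwarz using $\sum_{U \leq i \leq I} p_i^2 \leq p_U \|p_{\geq U}\|_1 \leq c_u/n^2$, and control the length $I-U+1$ via the bound $(I-U+1)\,p_I^2 \leq \sum_{U \leq i \leq I} p_i^2 \leq c_u/n^2$ together with a lower bound on $p_I$ extracted from the defining inequality $\sum_{i \geq I} p_i^2 > \cI/n^2$. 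The latter yields $p_I(p_I + \|p_{>I}\|_1) > \cI/n^2$, so splitting on whether $p_I \leq \|p_{>I}\|_1$ or $p_I > \|p_{>I}\|_1$ produces either $p_I > \cI/(2n^2 \|p_{>I}\|_1)$ or $p_I > \sqrt{\cI/2}/n$; in each sub-case the resulting bound on $I-U+1$ feeds into the Cauchy--Schwarz estimate and gives $v \lesssim_\eta \|p_{>I}\|_1 + 1/n$.

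The main technical obstacle is the case $U \leq I$ in the upper direction of the equivalence: the defining inequality of $U$ yields only $\ell_2$-type control on $(p_i)_{i \geq U}$, while information on $p_I$ must be extracted indirectly from the minimality of $I$ (the failure of the defining inequality at $J = I-1$). The two-way sub-case split on $p_I$ versus $\|p_{>I}\|_1$ is the technically cleanest way to bridge this gap; the rest of the argument becomes mechanical once the minimality of $U$ is exploited to control $(U-I-1)$ in the $U > I$ regime.
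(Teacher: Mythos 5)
Your proof is correct, but it takes a genuinely different route from the paper's. In the main regime ($U>I$ with nonnegligible mass between $I$ and $U$), the paper uses the failure of the threshold condition at the single index $U-1$ to get $p_{U-1}^2 > \cI/(2n^2)$, deduces via $\sum_{i>I}p_i^2\le \cI/n^2$ and monotonicity that there is at most one index strictly between $I$ and $U$, and bounds the intermediate mass by $\sqrt{\cI}/n$, landing in the second disjunct; you instead sum the failed inequalities $n^2p_i\norm{p_{\geq i}}_1>c_u$ over all $I<i<U$ and play the resulting lower bound on $m=\sum_{I<i<U}p_i$ against the Cauchy--Schwarz bound $m^2\le (U-I-1)\,\cI/n^2$, obtaining $m\le(\cI/c_u)\norm{p_{>I}}_1$ directly and landing in the first disjunct (your tuning $\cI\le \frac{2}{3}c_u$ is of the same nature as the paper's own assumption $c_u\ge\cI$, so it is harmless). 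What the paper's route buys is the explicit structural fact $U-I-1\le 1$ and the additive $\sqrt{\cI}/n$ bound; what your route buys is that no counting of intermediate indices is needed, and, more substantially, that you also treat the case $U\le I$, which can occur under the appendix's definition of $U$ (smallest index $\ge A$ meeting the threshold) and which the paper silently excludes by defining $U>I$ in the main text. There you supply the otherwise nontrivial upper direction $\norm{p_{\geq U}}_1\lesssim_\eta \norm{p_{>I}}_1+1/n$ via $p_U\norm{p_{\geq U}}_1\le c_u/n^2$, Cauchy--Schwarz, and the lower bound on $p_I$ extracted from the minimality of $I$; I checked your two sub-cases ($p_I\le\norm{p_{>I}}_1$ versus $p_I>\norm{p_{>I}}_1$): they yield $v\lesssim_\eta \norm{p_{>I}}_1$ and $v\lesssim_\eta 1/n$ respectively, so the sketch closes as claimed.
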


\begin{proof}[Proof of lemma \ref{I_and_U}]

If $\norm{p_{\geq U}}_1 \geq \frac{1}{3}\norm{p_{>I}}_1$ then the result is clear. Now, suppose $\norm{p_{\geq U}}_1 < \frac{1}{3}\norm{p_{>I}}_1$. We have $\norm{p_{\geq U}}_1 < \frac{1}{2} \norm{P_{I \rightarrow U}}$ where $P_{I \rightarrow U} = (p_{I+1}, \cdots, p_{U-1})$. We have:

\begin{align*}
    p_{U-1}^2 + \frac{\cI}{2n^2} & \;\;\geq \;\;  p_{U-1}^2 + \frac{1}{2} \sum_{i=I+1}^{U-1}p_i^2 \;\; \geq \;\; p_{U-1}\left(p_{U-1} + \frac{1}{2} \sum_{i=I+1}^{U-1}p_i \right) \;\;\\
    &  > \;\; p_{U-1}\left(p_{U-1} + \sum_{i \geq U}p_i \right) \\
    & \geq  p_{U-1} \sum_{i \geq U-1} p_i \;\; = \;\; p_{U-1} \norm{P_{\geq U-1}}_1 > \frac{c_u}{n^2}
\end{align*}
by the definition of $U$. \\

Therefore, $$p_{U-1}^2 > \frac{2c_u-\cI  }{2n^2} \Longrightarrow \forall I < i < U, \;\; p_i^2 > \frac{\cI}{2n^2} \;\; \text{ since } c_u \geq \cI.$$

Moreover, 

$$ \frac{\cI}{n^2} \geq \sum_{I< i <U} p_i^2 > (I-U-1)p_{U-1}^2 > (I-U-1)\frac{\cI    }{2n^2}$$

So that 

$$ I-U-1 < 2 \;\;\; i.e. \;\;\; I-U-1 \leq 1$$

Thus: 
\begin{align*}
    \norm{p_{>I}}_1 &\leq \norm{P_{I \rightarrow U}}_1 + \norm{p_{\geq U}}_1 \leq (I-U-1)p_{I+1} + \norm{p_{\geq U}}_1\\
    &\leq \frac{\sqrt{\cI}}{n} + \norm{p_{\geq U}}_1 \lesssim \norm{p_{\geq U}}_1  + \frac{1}{n}.
\end{align*}

Hence the result.
\end{proof} 

\begin{lemma}\label{A_and_I}
Let $\rho_1$ and $\rho_2$ be defined as in Lemmas~\ref{bulk} and \ref{tail}. We have 
$\rho_1 + \rho_2 \asymp \sqrt{\frac{\norm{p_{\leq I}}_r}{n}} + \rho_2$.
\end{lemma}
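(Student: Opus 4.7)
The plan is to establish the two directions separately. The easy direction, $\rho_1 \leq \sqrt{\|p_{\leq I}\|_r / n}$, follows immediately from $A\leq I$ (so $\|p_{\leq A}\|_r \leq \|p_{\leq I}\|_r$) combined with the algebraic identity $r/t - r/4 = 1/2$, itself a direct consequence of $r = 2t/(4-t)$. This gives
$$
\rho_1 \;=\; \frac{\|p_{\leq A}\|_r^{r/t}}{\sqrt n\,\|p_{\leq I}\|_r^{r/4}} \;\leq\; \frac{\|p_{\leq I}\|_r^{r/t - r/4}}{\sqrt n} \;=\; \sqrt{\frac{\|p_{\leq I}\|_r}{n}},
$$
and adding $\rho_2$ to both sides yields one half of the desired $\asymp$.

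For the reverse direction $\sqrt{\|p_{\leq I}\|_r/n} \lesssim_\eta \rho_1 + \rho_2$, I would set $\alpha := \|p_{\leq A}\|_r^r$ and $\beta := \sum_{A<i\leq I} p_i^r$, so that $\|p_{\leq I}\|_r^r = \alpha + \beta$. If $\beta \leq \alpha$, then $\|p_{\leq I}\|_r \asymp \|p_{\leq A}\|_r$ and the same exponent identity gives $\rho_1 \asymp \sqrt{\|p_{\leq I}\|_r / n}$, concluding this easy sub-case. The interesting regime is $\beta > \alpha$, which forces $A<I$. Since $\alpha+\beta \asymp \beta$ there, the target inequality reduces to
$$\beta^{1/(2r)} \;\lesssim_\eta\; \sqrt n\,\rho_2,$$
which is the substance of the lemma.

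To establish this inequality, I would exploit the strict maximality of $A$: since $A<I$, we get $p_i^{b/2} < \cA/(\sqrt n\,\|p_{\leq I}\|_r^{1/4})$ for every $A<i\leq I$, and in particular $p_i^b < \cA^2/(n\,\|p_{\leq I}\|_r^{1/2})$. Combining this pointwise control with the aggregate $\ell_2$ bound $\sum_{j>A} p_j^2 \leq (\cI+\cA^4)/n^2$ furnished by Lemma \ref{A_and_1_over_n}, and factorising $p_i^r = p_i^{r-b}\cdot p_i^b$ (with $r-b=4(t-1)/(4-t)\geq 0$ for $t\in[1,2]$), I would apply Hölder's inequality to interpolate at exponent $r$. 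Lemma \ref{I_and_U} then allows one to pass from the auxiliary index $U$ to $I$ inside $\rho_2$, yielding an upper bound on $\beta$ of the correct order $(\sqrt n\,\rho_2)^{2r}$.

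I expect the main obstacle to be precisely this interpolation step in the hard regime $\beta > \alpha$: we have one pointwise bound on $p_i$ at exponent $b/2$ (carrying a $\|p_{\leq I}\|_r^{-1/4}$ factor inherited from the definition of $A$) and one aggregate $\ell_2$ bound at scale $1/n^2$, and the two must be merged to produce exactly the exponents of $n$ and of $\|p_{>I}\|_1$ that appear in $\rho_2$. The key arithmetic identity making all scalings match is $1/(2r) + 1/4 = 1/t$, which is what ties $r$, $t$ and the respective exponents in $\rho_1$ and $\rho_2$ together.
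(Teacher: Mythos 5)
Your first direction and the sub-case $\beta\le\alpha$ are correct and coincide with the paper's trivial observations ($A\le I$ together with the identity $r/t-r/4=1/2$). The gap lies in the hard regime: the interpolation step you sketch cannot be completed from the ingredients you list, because every fact you invoke controls the relevant quantities in the wrong direction. Your target, $\beta\lesssim(\sqrt n\,\rho_2)^{2r}=\norm{p_{\geq I}}_1^{2b}\,n^{2-2r}$, has the tail mass $\norm{p_{\geq I}}_1$ raised to the \emph{positive} power $2b>0$ (for $t<2$) on the side that must be large, so you need a \emph{lower} bound on $\norm{p_{\geq I}}_1$. But the maximality of $A$ only yields pointwise upper bounds on $p_i$ for $A<i\le I$; Lemma~\ref{A_and_1_over_n} is an upper bound on $\sum_{j>A}p_j^2$; and Lemma~\ref{I_and_U} merely relates $\norm{p_{\geq U}}_1$ to $\norm{p_{>I}}_1$ up to $1/n$ (and is not needed here at all, since $\rho_2$ in Lemma~\ref{tail} is already expressed through $I$). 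None of these says anything from below about the mass at or beyond $I$, so no Hölder interpolation among them can manufacture the factor $\norm{p_{\geq I}}_1^{2b}$. (Your plan does close at the endpoint $t=2$, where $b=0$ and the target degenerates to $\beta\lesssim n^{-2}$, but not for $t\in[1,2)$.)

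The missing ingredient is the \emph{minimality} of $I$ in \eqref{def:def_I}: since $I-1$ is not admissible, $\sum_{i\ge I}p_i^2>\cI/n^2$, hence $p_I\,\norm{p_{\geq I}}_1\ge\sum_{i\ge I}p_i^2>\cI/n^2$, which is the only available lower bound on the tail mass. The paper pairs it with the maximality of $A$: when $A<I$ one has $p_I^{2b}\sum_{i\le I}p_i^r\le p_{A+1}^{2b}\sum_{i\le I}p_i^r<\cA^4/n^2$ (using $p_I\le p_{A+1}$), and since $2b=2-r$ the factor $p_I$ cancels between the two estimates, giving $\bigl(n\norm{p_{\geq I}}_1\bigr)^{2-r}\gtrsim_\eta n^r\sum_{i\le I}p_i^r$, i.e.\ $\rho_2\gtrsim_\eta\sqrt{\norm{p_{\leq I}}_r/n}$ after raising to the power $1/(2r)$. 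This is exactly the estimate your hard case requires, and it holds whenever $A<I$, so the $\alpha$/$\beta$ split is not even needed. If you replace your Hölder interpolation by this cancellation of $p_I$, your argument closes; as written, it does not.
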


\begin{proof}[Proof of Lemma \ref{A_and_I}]
Clearly, $\rho_1 + \rho_2 \leq \sqrt{\frac{\norm{p_{\leq I}}_r}{n}} + \rho_2$. To prove $\rho_1 + \rho_2 \gtrsim_\eta \sqrt{\frac{\norm{p_{\leq I}}_r}{n}} + \rho_2$, there are two cases.
\begin{itemize}
    \item If $A=I$ then the result is clear.
    \item Otherwise, $I>A$. Note that by setting $p_i' := np_i$ for all $i =1, \cdots, N$, the result to show can be rewritten as: 
    \begin{equation}\label{P_prime}
        \frac{\norm{p'_{\leq A}}_r^\frac{r}{t}}{\norm{p'_{\leq I}}_r^\frac{r}{4}} + \norm{p'_{\geq I}}_1^{2-t} \asymp \sqrt{\norm{p'_{\leq I}}_r} + \norm{p'_{\geq I}}_1^{2-t}.
    \end{equation}
    We have by definition of $A$ and $I$:
    \begin{align*}
        &p_I'^{2-r}\left( \sum_{i\geq I} p_i' \right)^{2-r} = \left( \sum_{i\geq I}p_I' p_i' \right)^{2-r} \geq \left(\sum_{i\geq I}p_i'^2 \right)^{2-r} \gtrsim_\eta 1 \text{ and}\\
        &\\
        &p_I'^{\;2b} \sum_{i \leq I} p_i'^r \; \leq \; p_{A+1}'^{\;2b} \sum_{i \leq I} p_i'^r \leq \cA^4 \asymp 1 \text{ by definition of } A.
    \end{align*}
    Hence, by noticing that $2b = 2-r$ we have $ \left( \sum_{i\geq I} p_i' \right)^{2-r} > \sum_{i \leq I} p_i'^r$, which yields $ \norm{p'_{\geq I}}_1^{2-t} \geq \sqrt{\norm{p'_{\leq I}}_r} \geq \frac{\norm{p'_{\leq A}}_r^\frac{r}{t}}{\norm{p'_{\leq I}}_r^\frac{r}{4}}  $ by raising to the power $\frac{1}{2r}$. This condition yields the result of the lemma, by replacing $p'$ by $np$.
\end{itemize}
\end{proof}

\begin{lemma}\label{I_to_A}
$\norm{p_{>I}}_1 + \frac{1}{n} \asymp \norm{P_{>A}}_1 + \frac{1}{n}$.
\end{lemma}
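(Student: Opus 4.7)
The plan is as follows. One direction is immediate: since $A \leq I$ by definition, we have $\{i : i > I\} \subseteq \{i : i > A\}$, hence $\|p_{>I}\|_1 \leq \|p_{>A}\|_1$. It remains to prove $\|p_{>A}\|_1 \lesssim_\eta \|p_{>I}\|_1 + \frac{1}{n}$, i.e., that $\sum_{A < i \leq I} p_i \lesssim_\eta \|p_{>I}\|_1 + \frac{1}{n}$. If $A = I$ this is trivial, so assume $A < I$.

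First I would control $\sum_{A < i \leq I} p_i^2$. By the definition of $A$, every index $i$ with $A < i \leq I$ satisfies $p_i^{b/2} < \frac{\cA}{\sqrt{n}\,\|p_{\leq I}\|_r^{r/4}}$, i.e., $p_i^{2b} < \frac{\cA^4}{n^2\,\|p_{\leq I}\|_r^r}$. Using the identity $2b = 2 - r$ (which follows from $b = \frac{4-2t}{4-t}$ and $r = \frac{2t}{4-t}$) we can write $p_i^2 = p_i^r \cdot p_i^{2-r} \leq p_i^r \cdot \frac{\cA^4}{n^2\,\|p_{\leq I}\|_r^r}$, and summing over $A < i \leq I$ yields $\sum_{A < i \leq I} p_i^2 \leq \frac{\cA^4}{n^2}$. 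Since the $p_i$ are decreasing, this in turn gives $(I - A)\, p_I^2 \leq \frac{\cA^4}{n^2}$.

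Next I would obtain a lower bound on $p_I$ from the definition of $I$ as the smallest index satisfying $\sum_{i>J}p_i^2 \leq \frac{\cI}{n^2}$: the index $I-1$ fails this condition, so $\sum_{i \geq I} p_i^2 > \frac{\cI}{n^2}$, and since $p_i \leq p_I$ for $i \geq I$ we get $p_I \,\|p_{\geq I}\|_1 > \frac{\cI}{n^2}$. I split into two cases. If $p_I \leq \|p_{>I}\|_1$, then $\|p_{\geq I}\|_1 \leq 2\|p_{>I}\|_1$, so $p_I > \frac{\cI}{2 n^2 \|p_{>I}\|_1}$; plugging this into $I - A \leq \frac{\cA^4}{n^2 p_I^2}$ gives $\sqrt{I-A} \lesssim_\eta n\,\|p_{>I}\|_1$. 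If on the other hand $p_I > \|p_{>I}\|_1$, then $\|p_{\geq I}\|_1 \leq 2 p_I$, hence $2 p_I^2 \geq \sum_{i\geq I} p_i^2 > \frac{\cI}{n^2}$, so $p_I > \frac{\sqrt{\cI/2}}{n}$ and therefore $\sqrt{I - A} \lesssim_\eta 1$. In both cases, $\sqrt{I - A} \lesssim_\eta n\,\|p_{>I}\|_1 + 1$.

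Finally, I would conclude with Cauchy--Schwarz:
\[
\sum_{A < i \leq I} p_i \;\leq\; \sqrt{I - A}\,\sqrt{\sum_{A < i \leq I} p_i^2} \;\leq\; \frac{\cA^2 \sqrt{I - A}}{n} \;\lesssim_\eta\; \|p_{>I}\|_1 + \frac{1}{n},
\]
which combined with $\|p_{>A}\|_1 = \|p_{>I}\|_1 + \sum_{A < i \leq I} p_i$ gives the result. The main (and only mildly delicate) obstacle is the case analysis in the second step, used to turn the lower bound $p_I\,\|p_{\geq I}\|_1 > \cI/n^2$ coming from the minimality of $I$ into a usable lower bound on $p_I$ alone; the rest is bookkeeping built on the algebraic identity $2b = 2 - r$ already exploited in Lemma~\ref{A_and_I}.
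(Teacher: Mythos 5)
Your proof is correct, but it takes a genuinely different route from the paper's. The paper's argument (for $A<I$) works directly at the level of first moments: from the definition of $A$ it gets $p_I\sum_{A<i\leq I}p_i \leq \sum_{A<i\leq I}p_i^2 < \cA^4/n^2$, hence an upper bound on $p_I$; from the near-minimality of $I$ it extracts a lower bound of the form $p_{I+1} \gtrsim \cI/(n^2\|p_{>I}\|_1)$; comparing the two gives $\sum_{A<i\leq I}p_i \lesssim (\cA^4/\cI)\,\|p_{>I}\|_1$ in one line, at the price of imposing $\cA^4 \gtrsim \cI$ and of treating the case $I=N$ separately (where it shows $\|p_{>A}\|_1 \lesssim 1/n$ by a different chain of implications). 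You instead keep only the second-moment bound $\sum_{A<i\leq I}p_i^2 \leq \cA^4/n^2$, convert it into a first-moment bound via Cauchy--Schwarz, and control the cardinality $I-A$ through a lower bound on $p_I$ obtained from $\sum_{i\geq I}p_i^2 > \cI/n^2$, with a case split on whether $p_I \leq \|p_{>I}\|_1$. What your route buys: the two cases automatically generate the two terms $\|p_{>I}\|_1$ and $1/n$ of the target bound, so the situation $\|p_{>I}\|_1 = 0$ (in particular $I=N$) needs no separate treatment, and no relation between $\cA$ and $\cI$ has to be imposed --- the constants only enter the final factors $\cA^4/\cI$ and $\cA^4/\sqrt{\cI}$. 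What the paper's route buys is brevity: no index counting and no Cauchy--Schwarz. Two small points you should make explicit: the minimality step ``$I-1$ fails the condition'' requires $I \geq 1$ (if $I=0$ the lemma is trivial, since then $A=-\infty$ and $p_{>A}=p_{>I}=p$), and when $A=-\infty$ the quantity $I-A$ must be read as the cardinality of $\{i : A< i \leq I\}$, i.e.\ $I$, in the Cauchy--Schwarz and in the bound $(I-A)p_I^2 \leq \sum_{A<i\leq I}p_i^2$.
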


\begin{proof}[Proof of lemma \ref{I_to_A}]
If $A = I$ then the result is clear. Now, suppose that $A < I$. We have, by the definition of $A$:
\begin{align*}
    &\frac{\cA^4}{n^2} > P_{A+1}^{2b} \sum_{i \leq I}p_i^r \geq \sum_{i = A+1}^I p_i^2 \geq p_I\sum_{i=A+1}^I p_i ~~ \Longrightarrow ~~ \frac{\cA^4}{n^2 \sum_{i=A+1}^I p_i} \geq p_I
\end{align*}

Moreover if $I<N$, 
\begin{align*}
    \frac{\cI}{n^2} \leq \sum_{i >I} p_i^2 \leq p_{I+1} \sum_{i > I} p_i ~~ \Longrightarrow ~~ p_{I+1} \geq \frac{\cI}{n^2\sum_{i > I} p_i}
\end{align*}

So that 
\begin{align*}
    \sum_{i > I} p_i \geq \frac{\cI}{\cA^4}\sum_{i = A+1}^I p_i
\end{align*} and consequently $\|p_{>I}\|_1 \gtrsim \|p_{>A}\|_1$ if we impose moreover that $\cA^4 \gtrsim \cI$, which can be done \textit{wlog}.

Now if $I=N$, we have $\|p_{>I}\|_1 = 0$ and $p_N > \frac{\sqrt{\cI}}{n}$ and 
\begin{align*}
    p_{A+1}^{2b} < \frac{\cA^4}{n^2\sum_{i=1}^N p_i^r} &\Longrightarrow \sum_{j=A+1}^N p_{A+1}^{2b} p_j^r \leq \frac{\cA^4}{n^2}\\
    &\\
    & \Longrightarrow \sum_{j=A+1}^N p_j^2 \leq \frac{\cA^4}{n^2}\\ 
    & \Longrightarrow P_N \|p_{>A}\|_1 \leq \frac{\cA^4}{n^2}\\ 
    & \Longrightarrow \frac{\sqrt{\cI}}{n} \|p_{>A}\|_1 \leq \frac{\cA^4}{n^2}
\end{align*}

hence $\|p_{>A}\|_1 \lesssim \frac{1}{n} $ so that $\|p_{>A}\|_1 + \frac{1}{n}\asymp \|p_{>I}\|_1 + \frac{1}{n} \asymp \frac{1}{n}$


\end{proof}
\section{Upper bound}

Define $\Delta = q-p$. In the following, $c>0$ denotes an absolute constant, depending only on $\eta$. We call 
$$\rho = \sqrt{\frac{\norm{p_{\leq I}}_r}{n}} + \frac{\norm{p_{\geq I}}_1^{\frac{2-t}{t}}}{n^\frac{2-2t}{t}} + \frac{1}{n},$$
and we prove: $\rho^* \lesssim_\eta \rho$.

We start with the three following lemmas which control the expectation and variance of the statistics $T_{\bulk}, T_1, T_2$. We recall that $\bar n = \lfloor \frac{n}{2} \rfloor$.
\begin{lemma}[Bounds on expectation and variance of $T_{\bulk}$]\label{expVar}
Let $T_{\bulk}$ be defined as in equation~\eqref{test_bulk}. The expectation and variance of $T_{\bulk}$ satisfy:

\begin{align*}
    \mathbb{E}\left[T_{\bulk}\right] & = \sum_{i \leq A} \frac{\Delta_i^2}{p_i^b},\\
    \mathbb{V}[T_{\bulk}] & \leq \sum_{i \leq A} \;\; \frac{1}{p_i^{2b}} \left(\frac{q_i^2}{\bar n^2} + \frac{2}{\bar n}q_i\Delta_i^2\right).
\end{align*}
\end{lemma}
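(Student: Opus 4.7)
The plan is to exploit the sample-splitting structure, which makes $S_i/\bar n$ and $S'_i/\bar n$ independent for each coordinate $i$, and the mutual independence of the coordinates in the multivariate binomial model, which makes the summands in $T_{\bulk}$ independent across $i$. This reduces everything to a one-coordinate calculation.

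For the expectation, for each $i\leq A$ set $Y_i = S_i/\bar n - p_i$ and $Z_i = S'_i/\bar n - p_i$. Both have mean $q_i - p_i = \Delta_i$, and they are independent by sample splitting, so
\[
\mathbb{E}\bigl[Y_i Z_i\bigr] = \mathbb{E}[Y_i]\,\mathbb{E}[Z_i] = \Delta_i^2,
\]
and summing against the weights $1/p_i^b$ gives the claimed formula for $\mathbb{E}[T_{\bulk}]$.

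For the variance, mutual independence of the coordinates of the multivariate Bernoulli vector makes the random variables $\bigl(Y_iZ_i\bigr)_{i\le A}$ independent, so $\mathbb{V}[T_{\bulk}] = \sum_{i\le A} p_i^{-2b}\,\mathbb{V}[Y_iZ_i]$. For a single coordinate, I would use the identity for independent $Y,Z$,
\[
\mathbb{V}[YZ] = \mathbb{V}[Y]\mathbb{V}[Z] + \mathbb{V}[Y]\,\mathbb{E}[Z]^2 + \mathbb{V}[Z]\,\mathbb{E}[Y]^2,
\]
together with $\mathbb{V}[Y_i]=\mathbb{V}[Z_i] = q_i(1-q_i)/\bar n \le q_i/\bar n$ and $\mathbb{E}[Y_i]=\mathbb{E}[Z_i]=\Delta_i$. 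This yields
\[
\mathbb{V}[Y_iZ_i] \le \frac{q_i^2}{\bar n^2} + 2\,\frac{q_i}{\bar n}\,\Delta_i^2,
\]
which, after multiplying by $p_i^{-2b}$ and summing, gives exactly the stated upper bound on $\mathbb{V}[T_{\bulk}]$.

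There is essentially no serious obstacle here: the statement is a direct consequence of the independence structure built into the test (coordinate-wise independence of the binomial vector, and sample splitting between $S$ and $S'$) together with the standard product-variance identity for independent factors. The only thing to watch is bookkeeping: keeping $\mathbb{V}$ of a product of \emph{independent} variables distinct from the naive $\mathbb{E}[YZ]^2$ formula, and using $q_i(1-q_i)\le q_i$ (valid since $q_i\in[0,1]$) at the end to match the form in the statement.
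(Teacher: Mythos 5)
Your proposal is correct and follows essentially the same route as the paper: independence of $S_i$ and $S'_i$ from sample splitting gives the expectation, and independence across coordinates reduces the variance to a per-coordinate computation, where your product-variance identity for independent factors is just a repackaging of the paper's bias--variance expansion of $\mathbb{E}\bigl[(S_i/\bar n - p_i)^2\bigr]$, ending with the same bound via $q_i(1-q_i)\le q_i$. No gaps.
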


\begin{lemma}[Bounds on expectation and variance of $T_{1}$]\label{expVarT1}
Let $T_1$ be defined as in equation~\eqref{test_psi1}. The expectation and variance of $T_1$ satisfy:

\begin{align*}
    \mathbb{E}\left[T_1\right] & = \sum_{i > A} q_i - p_i,\\
    \mathbb{V}[T_1] & \leq \sum_{i > A} \;\; \frac{q_i}{n}.
\end{align*}
\end{lemma}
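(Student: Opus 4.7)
The plan is to exploit the independence structure of the multivariate Binomial model. Each observation $X_i = (X_i(1),\dots, X_i(N))$ has mutually independent Bernoulli entries with $X_i(j) \sim Ber(q_j)$, and the $X_i$'s are i.i.d. With $N_j = \sum_{i=1}^n X_i(j)$, we thus have $N_j \sim Bin(n, q_j)$; and crucially, the collection $(N_j)_{j>A}$ is mutually independent because the coordinates of each $X_i$ are independent and sums over $i$ preserve independence across coordinates $j$. I would state this independence observation explicitly at the start, since it is what powers both computations below.

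For the expectation, linearity immediately gives
\[
\mathbb{E}[T_1] = \sum_{i>A} \Big(\mathbb{E}\!\left[\tfrac{N_i}{n}\right] - p_i\Big) = \sum_{i>A} (q_i - p_i),
\]
using $\mathbb{E}[N_i] = n q_i$. This matches the claim.

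For the variance, by the independence of $(N_i)_{i>A}$ we have
\[
\mathbb{V}[T_1] \;=\; \sum_{i>A} \mathbb{V}\!\left(\frac{N_i}{n}\right) \;=\; \sum_{i>A} \frac{q_i(1-q_i)}{n} \;\leq\; \sum_{i>A} \frac{q_i}{n},
\]
where the last inequality uses $1-q_i \leq 1$. This yields the bound stated in the lemma.

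There is essentially no obstacle: this is a one-line moment computation for sums of independent Bernoullis. The only subtlety worth flagging is that the independence of the counts $N_j$ across the coordinates $j>A$ relies on the product Bernoulli structure of the model; if one were instead in the Multinomial setting, the counts would be negatively correlated and the variance bound would pick up covariance terms. In the Binomial (and after Poissonization, Poisson) case, however, this independence is automatic, so both claims follow directly.
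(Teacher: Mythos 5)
Your proof is correct and follows essentially the same route as the paper: linearity of expectation for the mean, and independence of the coordinate counts (each $N_i \sim Bin(n,q_i)$) plus the bound $q_i(1-q_i)\leq q_i$ for the variance. The paper merely writes $N_i = S_i + S'_i$ using its sample-splitting notation, which changes nothing in substance; your remark on coordinate-wise independence in the product Bernoulli model is exactly the point the paper invokes.
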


\begin{lemma}[Bounds on expectation and variance of $T_{2}$]\label{expVarT2}
Let $T_2$ be defined as in equation~\eqref{test_psi2}. The expectation and variance of $T_2$ satisfy:

\begin{align*}
    \mathbb{E}\left[T_2\right] & = \norm{(p-q)_{>A}}_2^2,\\
    \mathbb{V}[T_2] & \leq \sum_{i > A} \;\; \frac{q_i^2}{\bar n^2} + \frac{2}{\bar n}q_i\Delta_i^2.
\end{align*}
\end{lemma}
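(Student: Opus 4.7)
The plan is to mimic the proof of Lemma~\ref{expVar} step by step, observing that $T_2$ is the same quadratic-form-in-sample-splits as $T_{\mathrm{bulk}}$, but restricted to indices $i>A$ and with the weight $1/p_i^b$ replaced by $1$. In particular, I would first record that, since the coordinates of each $X_j$ are mutually independent under both $\mathcal{H}_0$ and $\mathcal{H}_1$, the random variables $(S_i,S'_i)_{i=1,\dots,N}$ are mutually independent across $i$, with $S_i\sim \mathrm{Bin}(k,q_i)$ and $S'_i\sim \mathrm{Bin}(k,q_i)$ independent of each other (sample splitting), where $k=\bar n=n/2$.

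For the expectation, I would set $U_i = S_i/k - p_i$ and $V_i=S'_i/k - p_i$, noting $\mathbb{E}[U_i]=\mathbb{E}[V_i]=q_i-p_i=\Delta_i$. By independence of $U_i$ and $V_i$,
\[
\mathbb{E}\bigl[U_i V_i\bigr] = \mathbb{E}[U_i]\,\mathbb{E}[V_i] = \Delta_i^2,
\]
and summing over $i>A$ yields $\mathbb{E}[T_2]=\sum_{i>A}\Delta_i^2 = \norm{(p-q)_{>A}}_2^2$, as claimed.

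For the variance, independence across coordinates gives $\mathbb{V}[T_2]=\sum_{i>A}\mathbb{V}[U_iV_i]$. For a fixed $i$, independence of $U_i$ and $V_i$ together with the classical identity $\mathbb{V}[XY]=\mathbb{V}[X]\mathbb{V}[Y]+\mathbb{V}[X](\mathbb{E}[Y])^2+\mathbb{V}[Y](\mathbb{E}[X])^2$ for independent $X,Y$ yields
\[
\mathbb{V}[U_iV_i] = \Bigl(\tfrac{q_i(1-q_i)}{k}\Bigr)^{\!2} + 2\,\Delta_i^2\,\tfrac{q_i(1-q_i)}{k}.
\]
Using $q_i(1-q_i)\leq q_i$ and $k=\bar n$, this is bounded by $q_i^2/\bar n^2 + 2 q_i\Delta_i^2/\bar n$, and summing over $i>A$ gives exactly the stated bound.

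There is no serious obstacle here: the computation is elementary and essentially identical to the one carried out for $T_{\mathrm{bulk}}$ in Lemma~\ref{expVar}, the only differences being the domain of summation ($i>A$ instead of $i\leq A$) and the absence of the weight $p_i^{-b}$. The only point that deserves a word of care is making sure that the sample-splitting is used properly to ensure independence of $S_i$ and $S'_i$ (so that the product expectation factors), and that the mutual independence of the coordinates of the multivariate Bernoulli sample is what allows decoupling the variance across $i$.
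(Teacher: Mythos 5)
Your proposal is correct and follows exactly the route the paper takes: the paper's proof of Lemma~\ref{expVarT2} simply says it is the proof of Lemma~\ref{expVar} with the weight $p_i^{-b}$ replaced by $1$ (i.e.\ $b=0$) and the summation taken over $i>A$, which is precisely the computation you carry out, using independence of $S_i$ and $S_i'$ from sample splitting and the product-variance identity for independent factors. The only cosmetic remark is that the lemma's pointer to equation~\eqref{test_psi2} is a labeling slip in the paper (that equation defines $\psi_2$), and you correctly worked with the intended statistic $T_2=\sum_{i>A}\bigl(\tfrac{S_i}{k}-p_i\bigr)\bigl(\tfrac{S_i'}{k}-p_i\bigr)$.
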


We then study the null and alternative hypotheses in the following subsection, bounding the probability of error of the test $\psi$.

\hfill

\subsection{Under the null hypothesis $\mathcal{H}_0$.}

We start by assuming that $p=q$. We recall that $\uc= \frac{4}{\sqrt{\eta}}$.

\paragraph{Test $\psi_{\bulk}$.} Moreover, for the bulk, since $p=q$, we have by lemma \ref{expVar}: $ \mathbb{E}[T_{\bulk}] = 0$ and $\mathbb{V}[T_{\bulk}] = \sum_{i \leq A} \frac{p_i^r}{n^2}$. Therefore by Chebyshev's inequality:
$$\mathbb P\left(T_{\bulk} > \uc \sqrt{\sum_{i \leq A} \frac{p_i^r}{n^2}} \right) \leq \frac{\eta}{16}$$
so that:
\begin{equation}\label{Type_I_error_TBulk}
    P\left(\psi_{\bulk} = 1 \right) \leq \frac{\eta}{16},
\end{equation}

\paragraph{Test $\psi_1$.} Since $p=q$, we have by Lemma~\ref{expVarT1} that $\mathbb{E}(T_{1}) = 0$ and $\mathbb{V}(T_{1}) \leq \sqrt{\frac{\sum_{i>A} p_i}{n}}$. \\
By the same argument $\psi_1$'s type-I error is upper bounded as:
\begin{align*}
    \mathbb P_p\left(\psi_1 = 1 \right) = \mathbb P_p\left(T_1 > \uc\sqrt{\frac{\sum_{i>A} p_i}{n}} \right) \leq \frac{1}{\uc^2} = \frac{\eta}{16},
\end{align*}
so that by definition of $\psi_1$
\begin{equation}\label{Type_I_error_T1}
    \mathbb P_p\left(\psi_1 = 1 \right) \leq \frac{\eta}{16},
\end{equation}

\paragraph{Test $\psi_{2}$.} 
Finally, under the null and since $p=q$, we have $\mathbf{E}(T_2) = 0$ and $\mathbb{V}(T_2) = \frac{1}{n^2} \sum_{i>A} p_i^2$ by Lemma~\ref{expVarT2} so that 

$$ \mathbb{P}\left(T_2 > \uc \sqrt{\frac{\sum_{i>A} p_i^2}{n^2}}\right) \leq \frac{\eta}{16},$$
which rewrites:
\begin{equation}\label{Type_I_error_T2}
    P\left(\psi_2 = 1 \right) \leq \frac{\eta}{16}.
\end{equation}

\paragraph{Conclusion}: Putting together equations \eqref{Type_I_error_T1}, \eqref{Type_I_error_TBulk} and \eqref{Type_I_error_T2} we get that the type I error of $\psi = \psi_{\bulk} \vee \psi_1 \vee \psi_2$ is upper bounded as
$$\mathbb P\left(\psi = 1 \right) \leq \sum_{i \in \{\bulk, 1,2\}} \mathbb P\left(\psi_i = 1 \right) \leq \frac{3\eta}{16} < \eta/2.$$

\subsection{Under the alternative hypothesis $\mathcal H_1( \rho)$} 


Suppose that for some constant $\barc>0$, we have $\norm{\Delta}_t \geq 2 \barc\rho$. By the triangle inequality, there are two cases: 
\begin{itemize}
    \item \underline{\bf First case:} Either $\norm{\Delta_{\leq A}}_t \geq \barc\rho$
    \item \underline{ \bf Second case:} Or $\norm{\Delta_{> A}}_t \geq \barc\rho$
\end{itemize}
 
\hfill

\begin{proposition}[Study in the \underline{\bf First case}]\label{prop:fcase}
There exists a large enough constant $\barc^{(\bulk)}>0$ such that if $\norm{\Delta_{\leq A}}_t \geq \barc^{(\bulk)} \rho$, then
$$\mathbb P(\psi_{\bulk}=1) \geq 1 - \eta/6.$$
\end{proposition}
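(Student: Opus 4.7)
The plan is to apply Chebyshev's inequality to $T_{\bulk}$ and verify that, under the alternative, its mean exceeds the threshold $\tau := \frac{\uc}{n}\norm{p_{\leq A}}_r^{r/2}$ by far more than its standard deviation. The engine is the conjugate-exponent identity $b = 2-r$, which entails both $bt/(2-t) = r$ and $1 - r(2-t)/t = r/2$; these two algebraic facts are what turn the bulk definitions into the sharp rate.

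First I would derive a lower bound on $\mathbb E[T_{\bulk}]$ by applying Hölder's inequality with conjugate exponents $(2/t,\,2/(2-t))$ to the factorization $|\Delta_i|^t = (\Delta_i^2/p_i^b)^{t/2}\,p_i^{bt/2}$. Using $bt/(2-t) = r$, this produces
\[\norm{\Delta_{\leq A}}_t^t \leq \bigl(\mathbb E[T_{\bulk}]\bigr)^{t/2}\,\norm{p_{\leq A}}_r^{r(2-t)/2},\]
and hence $\mathbb E[T_{\bulk}] \geq \norm{\Delta_{\leq A}}_t^2/\norm{p_{\leq A}}_r^{r(2-t)/t}$ by Lemma~\ref{expVar}. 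Combining this with $\norm{\Delta_{\leq A}}_t \geq \barc^{(\bulk)}\rho \geq \barc^{(\bulk)}\sqrt{\norm{p_{\leq I}}_r/n}$, the monotonicity $\norm{p_{\leq A}}_r \leq \norm{p_{\leq I}}_r$, and the identity $1 - r(2-t)/t = r/2$, gives
\[\mathbb E[T_{\bulk}] \;\geq\; (\barc^{(\bulk)})^2\,\frac{\norm{p_{\leq A}}_r^{r/2}}{n} \;\geq\; \frac{(\barc^{(\bulk)})^2}{\uc}\,\tau,\]
which is at least $2\tau$ once $\barc^{(\bulk)}$ is large enough depending only on $\eta$.

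Next I would bound the variance via Lemma~\ref{expVar}, splitting $\mathbb V[T_{\bulk}] \leq V_1 + V_2$ with $V_1 := \bar n^{-2}\sum_{i\leq A}q_i^2/p_i^{2b}$ and $V_2 := 2\bar n^{-1}\sum_{i\leq A}q_i\Delta_i^2/p_i^{2b}$. The crude bound $q_i \leq p_i + |\Delta_i|$ isolates a genuinely null-like contribution $\bar n^{-2}\norm{p_{\leq A}}_r^r = O(\tau^2/\uc^2)$ inside $V_1$, while the remaining cross terms of both $V_1$ and $V_2$ can be factored as $\sum(\Delta_i^2/p_i^b)\cdot(\mathrm{factor}/p_i^b)$. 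The defining inequality~\eqref{def_A} of $A$ forces $p_i^b \gtrsim_\eta \norm{p_{\leq I}}_r^{-1/2}/n$ on the bulk, so each cross term is bounded by $\mathbb E[T_{\bulk}]$ times a factor that is small compared to $\mathbb E[T_{\bulk}]$ itself by the mean lower bound of the previous step. Altogether $\sqrt{\mathbb V[T_{\bulk}]} \ll \mathbb E[T_{\bulk}] - \tau$, and Chebyshev yields $\mathbb P(T_{\bulk} \leq \tau) \leq \mathbb V[T_{\bulk}]/(\mathbb E[T_{\bulk}] - \tau)^2 \leq \eta/6$ for $\barc^{(\bulk)}$ chosen large enough (depending only on $\eta$).

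The main obstacle will be the variance bookkeeping in the intermediate regime $t \in (1,2)$, where the weights $1/p_i^b$ carry a nonstandard exponent and the cross-type sums $\sum \Delta_i^2/p_i^{2b}$ and $\sum |\Delta_i|^3/p_i^{2b}$ do not admit a direct $\ell_t$ bound. The definition~\eqref{def_A} of $A$---engineered precisely so that $p_i^b$ is not too small on the bulk---is exactly what allows these cross terms to be reabsorbed into $\mathbb E[T_{\bulk}]$ rather than inflating the variance, and dovetails neatly with the Hölder identity $bt/(2-t)=r$ to give a matching constant.
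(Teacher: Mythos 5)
Your proposal is correct and follows essentially the same route as the paper's proof: the H\"older inequality with exponents $2/t$ and $2/(2-t)$ (the paper's display \eqref{holder}) converts the $\ell_t$ separation into a lower bound on $\mathbb{E}[T_{\bulk}]$ dominating the threshold, the variance from Lemma~\ref{expVar} is split into the null-like term $\bar n^{-2}\sum_{i\leq A}p_i^r$ plus the cross terms $\sum_{i\leq A}\Delta_i^2/p_i^{2b}$, $\sum_{i\leq A}p_i^{1-2b}\Delta_i^2$, $\sum_{i\leq A}|\Delta_i|^3/p_i^{2b}$, and the constraint coming from the definition of $A$ together with the mean lower bound reabsorbs these into a small multiple of $\mathbb{E}[T_{\bulk}]^2$ before concluding by Chebyshev, exactly as in the paper. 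Two details to tighten when writing it out: the bulk constraint is $p_i^b \gtrsim_\eta \big(n\,(\sum_{j\leq I}p_j^r)^{1/2}\big)^{-1}$, i.e.\ $\norm{p_{\leq I}}_r^{-r/2}/n$ rather than $\norm{p_{\leq I}}_r^{-1/2}/n$, and the third-moment cross term needs one extra step (the paper uses $\norm{\cdot}_3\leq\norm{\cdot}_2$ before invoking this constraint and the mean bound) because its residual factor $|\Delta_i|/p_i^b$ is not uniformly small.
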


\begin{proposition}[Study in the \underline{\bf Second case}]\label{prop:scase}
If $\norm{\Delta_{>A}}_t \geq c\rho$, then
$$\mathbb P(\psi_1 \lor \psi_2=1) \geq 1 - \frac{2\eta}{3}.$$
\end{proposition}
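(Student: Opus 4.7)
The plan is a case analysis on the structure of $\Delta_{>A} := q_{>A} - p_{>A}$, based on the interpolation inequality
$$\|v\|_t \leq \|v\|_1^{(2-t)/t}\,\|v\|_2^{2(t-1)/t}, \qquad t \in [1,2],$$
which translates the hypothesis $\|\Delta_{>A}\|_t \geq c\rho$ into
$$R_1^{2-t}\,R_2^{2(t-1)} \geq (c\rho)^t, \qquad R_1 := \|\Delta_{>A}\|_1,\ R_2 := \|\Delta_{>A}\|_2.$$
The split will be on whether $R_2$ is large or small compared to $1/n$: in the first regime $\psi_2$ should fire because collisions on the tail become likely, and in the second regime $\psi_1$ should fire because the first-moment statistic $T_1$ is shifted away from zero.

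First, when $R_2 \geq C_2/n$ with $C_2 = C_2(\eta)$ a large constant to be chosen, I would use the bound $\|p_{>A}\|_2 = O(1/n)$ (which follows from the definitions of $A$ and $I$, as noted in the remark on index $I$ and in Lemma~\ref{A_and_1_over_n}) to deduce $\|q_{>A}\|_2 \gtrsim 1/n$. Then
$$\mathbb{P}(\psi_2 = 0) = \prod_{i>A}(1-q_i)^{n-1}\bigl(1+(n-1)q_i\bigr) \leq \exp\Bigl(-c \sum_{i>A}\min(n^2 q_i^2,\,1)\Bigr),$$
via the elementary estimate $-(n-1)q + \log(1+(n-1)q) \leq -c\min((n-1)^2 q^2,\,1)$ for $q \in [0,1/2]$, and taking $C_2$ large enough makes this probability at most $\eta/3$.

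Second, when $R_2 < C_2/n$, the interpolation bound combined with $\rho \gtrsim \|p_{>A}\|_1^{(2-t)/t}/n^{2(t-1)/t}$ (via Lemma~\ref{I_to_A}) gives $R_1 \geq K\|p_{>A}\|_1$ with $K = K(\eta)$ as large as desired, provided $c$ was chosen large. Since $q_i \geq 0$ for each $i>A$, the negative part $R_1^- := \sum_{i>A}(p_i - q_i)_+$ is bounded by $\|p_{>A}\|_1$, whence
$$\bigl|\mathbb{E}[T_1]\bigr| = \bigl|R_1^+ - R_1^-\bigr| \geq (K - 2)\,\|p_{>A}\|_1,$$
while $\sum_{i>A} q_i \leq (K+1)\|p_{>A}\|_1$. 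Chebyshev's inequality applied to $T_1$, together with the variance bound from Lemma~\ref{expVarT1}, then yields $\mathbb{P}(\psi_1 = 0) \leq \eta/3$ whenever $\|p_{>A}\|_1 \gtrsim 1/n$, because then $(K-2)\|p_{>A}\|_1$ exceeds the threshold $\uc\sqrt{\|p_{>A}\|_1/n}$ plus a large multiple of $\sqrt{\mathbb{V}[T_1]} \leq \sqrt{\sum q_i/n}$.

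The hardest regime, and the main obstacle I foresee, is the boundary case $\|p_{>A}\|_1 \lesssim 1/n$, in which $\rho$ is dominated by its additive $1/n$ term and the signal $\|\Delta_{>A}\|_t \gtrsim 1/n$ need not come from a bulk first-moment shift. There, the tail essentially holds a single informative coordinate $i^\star$, and one must argue that $q_{i^\star}$ is either of order $1/n$ (triggering $\psi_2$) or shifts $\mathbb{E}[T_1]$ by order $1/n$ (which still dominates the $\psi_1$ threshold in this regime). Matching the constants between $c$, $C_2$, $K$ and the Chebyshev scale so that the failure probabilities of the two tests in the two cases are all simultaneously below $\eta/3$ is where the careful bookkeeping lies.
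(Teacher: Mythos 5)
Your overall route coincides with the paper's: interpolate $\|\Delta_{>A}\|_t^t \leq \|\Delta_{>A}\|_1^{2-t}\|\Delta_{>A}\|_2^{2(t-1)}$, split into an $\ell_2$-large regime handled by $\psi_2$ and an $\ell_1$-large regime handled by $\psi_1$, use $\|p_{>A}\|_2 \lesssim 1/n$, bound the negative part of $\Delta_{>A}$ by $\|p_{>A}\|_1$, and finish with Chebyshev via Lemma~\ref{expVarT1}. However, two steps do not close as written. In the $\ell_2$-large regime, your bound $\mathbb P(\psi_2=0)\leq \exp\bigl(-c\sum_{i>A}\min(n^2q_i^2,1)\bigr)$ caps each coordinate's contribution at an absolute constant, so the conclusion ``taking $C_2$ large enough makes this at most $\eta/3$'' does not follow: if the $\ell_2$ mass of $q_{>A}$ sits on a single coordinate with $nq_{i^\star}\gg 1$, the sum equals $1$ and the bound saturates at $e^{-c}$, independently of $C_2$. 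The exponent must grow in the over-dispersed range, e.g.\ as $\min(n^2q_i^2,\,nq_i)$; the paper does this by splitting the tail into $\{nq_j\le 1/2\}$ and $\{nq_j> 1/2\}$, obtaining an exponent $\tfrac13\sum_- n^2q_j^2+\tfrac1{10}\sum_+ nq_j$, and a convexity/boundary argument showing the exponent is at least of order $n\|q_{>A}\|_2$; only then does choosing $C_2(\eta)\asymp\log(1/\eta)$ drive the type-II error of $\psi_2$ below the target.

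In the $\ell_1$-large regime you only complete the Chebyshev computation under the side condition $\|p_{>A}\|_1\gtrsim 1/n$ and leave the complementary case as an acknowledged obstacle, sketching an unnecessary ``single informative coordinate'' analysis. No such case distinction is needed: the $1/n$ part of $\rho$ passes through the same interpolation step, so in this branch one actually gets $\|\Delta_{>A}\|_1\geq K\bigl(\|p_{>A}\|_1+1/n\bigr)$ with $K=K(c,\eta)$ as large as desired, hence $\|q_{>A}\|_1\geq (K-1)\bigl(\|p_{>A}\|_1+1/n\bigr)$ and $|\mathbb E[T_1]|\geq (K-2)\bigl(\|p_{>A}\|_1+1/n\bigr)$, while the threshold is at most $\uc\bigl(\|p_{>A}\|_1+1/n\bigr)$ (via $\sqrt{xy}\leq x+y$) and $\mathbb V[T_1]\leq \|q_{>A}\|_1/n$; the signal-to-noise ratio is then of order $n\|q_{>A}\|_1\geq K-1$ uniformly, and a single Chebyshev bound covers both $\|p_{>A}\|_1\gtrsim 1/n$ and $\|p_{>A}\|_1\lesssim 1/n$. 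This is exactly how the paper's argument (Lemma~\ref{sum_to_norm} together with the monotonicity of $x\mapsto x/\bigl(n(x/2-z)^2\bigr)$ for $z=(\uc+1)(\|p_{>A}\|_1+1/n)$) avoids any boundary-case analysis. With these two repairs your proof matches the paper's; as written, both the $\psi_2$ bound and the boundary regime are genuine gaps.
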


\hfill
\hfill
\hfill

\begin{proof}[Proof of Proposition~\ref{prop:fcase}]

Suppose $\norm{\Delta_{\leq A}}_t \geq c\rho$ for some constant $c$. We show that if $c$ is large enough, then the test $\psi_{Bulk}$ will detect it. To do so, we compute a constant $c'$ depending on $c$ such that if $\norm{\Delta_{\leq A}}_t \geq c\rho$, then $\mathbb{V}(T_{Bulk}) \leq c' \; \mathbb{E}(T_{Bulk})^2$ and such that $\lim_{c \to +\infty} c' = 0$. 

By definition of $\rho$, we have in particular: $\norm{\Delta_{\leq A}}_t \geq c\sqrt{\frac{\norm{p_{\leq I}}_r}{n}} \vee \frac{c}{n}$, hence 

\begin{equation}
    \frac{1}{n^2} \leq \frac{1}{c^4}\frac{\norm{\Delta_{\leq A}}_t^4}{\norm{p_{\leq I}}_r^2} \wedge  \frac{\norm{\Delta_{\leq A}}_t^2}{c^2}
\end{equation}
Using Lemma \ref{expVar} we split $\mathbb{V}[T_{\bulk}]$ into four terms

\begin{align*}
    \mathbb{V}[T_{\bulk}] & \leq \sum_{i \leq A} \;\; \frac{1}{p_i^{2b}} \left(\frac{\left(p_i + \Delta_i\right)^2}{n^2} + \frac{2}{n}\left(p_i + \Delta_i\right)\Delta_i ^2\right)\\
    &\\
    & \leq \underbrace{\frac{2}{n^2}\sum_{i \leq A} p_i^r}_{\circled{1}} \;+\; \underbrace{\frac{2}{n^2}\sum_{i \leq A} \frac{\Delta_i^2}{p_i^{2b}}}_{\circled{2}} \;+\; \underbrace{\frac{2}{n}\sum_{i \leq A} p_i^{1 - 2b} \Delta_i^2}_{\circled{3}}  \;+\; \underbrace{\frac{2}{n}\sum_{i \leq A} \frac{\Delta_i^3}{p_i^{2b}}}_{\circled{4}}.
\end{align*}

Now, we show that each of the four terms is less than $\mathbb{E}[T_{\bulk}]^2$, up to a constant\\

{\bf Term $\circled{1}$:} We have by Hölder's inequality:
\begin{align}
    \sum_{i\leq A} \Delta_i^t &\leq \left[ \sum_{i \leq A}  \left(\frac{\Delta_i^t}{p_i^{\frac{b t}{2}}} \right)^\frac{2}{t} \right]^\frac{t}{2}\left[ \sum_{i \leq A} \left(p_i^\frac{b t}{2} \right)^\frac{2}{2-t} \right]^\frac{2-t}{2} = \left(\sum_{i \leq I} \frac{\Delta_i^2}{p_i^b} \right)^\frac{t}{2} \left(\sum_{i \leq I} p_i^r\right)^{1-\frac{t}{2}}. \nonumber \\
    \text{Hence} \;\;\;\;\;\; \norm{\Delta_{\leq A}}_t &\leq \left( \sum_{i \leq A}  \frac{\Delta_i^2}{p_i^b} \right)^\frac{1}{2}\left( \sum_{i \leq A} p_i^r \right)^\frac{2-t}{2t}. \label{holder} 
\end{align}

Moreover, we have $\frac{1}{n^2} \leq \frac{\norm{\Delta}_t^4}{c^4\norm{p_{\leq I}}_r^2}$ so that the term $\circled{1}$ writes:

\begin{align}
    \frac{2}{n^2}\sum_{i \leq A} p_i^r & \leq 2 \sum_{i \leq A} p_i^r \left(\sum_{i\leq A} \Delta_i^t\right)^\frac{4}{t} \frac{1}{c^4 \left(\sum_{i\leq I} p_i^r\right)^\frac{2}{r}} \nonumber\\
    & \leq \frac{2}{c^4} \left(\sum_{i\leq A} p_i^r\right)^{1 - \frac{2}{r}} \left( \sum_{i \leq A}  \frac{\Delta_i^2}{p_i^b} \right)^2\left( \sum_{i \leq A} p_i^r \right)^\frac{4-2t}{t} \; \text{ by \eqref{holder}}\nonumber\\
    & = \frac{2}{c^4} \left( \sum_{i \leq A}  \frac{\Delta_i^2}{p_i^b} \right)^2 = \frac{2}{c^4} \mathbb{E}[T_{\bulk}]^2. \label{exp_bulk_H1}
\end{align}

{\bf Term $\circled{2}$:} The condition $a \leq p_A^\frac{b}{2}$ ensures that:
\begin{align*}
    p_A^b \geq a^2 &= \frac{\cA^2}{\sqrt{2}(\sum_{j \leq I}p_j^r)^{1/2} n} =: \widetilde{c} \frac{1}{(\sum_{j \leq I}p_j^r)^{1/2} n}.
\end{align*}

Using this condition, the term $\circled{2}$ writes:
\begin{equation}\label{case_2}
    \sum_{i \leq A} \frac{1}{p_{i}^{2b}} \frac{ \Delta_i^2}{n^2} \;\leq \; \frac{1}{n^2}\frac{1}{p_A^b} \sum_{i \leq A}\frac{\Delta_i^2}{p_i^b} \;\leq  \; \widetilde{c}\frac{1}{n} \left(\sum_{j \leq I}p_j^r\right)^\frac{1}{2} \left( \sum_{i \leq A}  \frac{\Delta_i^2}{p_i^b} \right).
\end{equation}

Moreover, since $ \sqrt{\frac{\norm{p_{\leq I}}_r}{n}} \lesssim \rho \lesssim \norm{\Delta_{\leq A}}_t$ we have, using (\ref{holder}):

\begin{equation}\label{r_norm_with_I}
    \frac{1}{n} \left(\sum_{j \leq I}p_j^r\right)^\frac{1}{2} = \frac{1}{n^b} \left(\sqrt{\frac{\norm{p_{\leq A}}_r}{n}}\right)^r \; \leq \; \frac{1}{n^b}\left( \sum_{i \leq A}  \frac{\Delta_i^2}{p_i^b} \right)^\frac{r}{2}\left( \sum_{i \leq A} p_i^r \right)^\frac{b}{2} \lesssim \sum_{i \leq A}  \frac{\Delta_i^2}{p_i^b}.
\end{equation}

In the last inequality, we use the fact proved in case number $\circled{1}$ that $\frac{1}{n^b} \left( \sum_{i \leq A} p_i^r \right)^\frac{b}{2} \lesssim \mathbb{E}[T_{\bulk}]^b$ and the relation $\frac{r}{2} + b = 1$\\

Plugging in (\ref{case_2}) yields that the second term $\circled{2}$ is bounded by $\mathbb{E}[T_{\bulk}]^2$\\

\hfill

{\bf Term $\circled{3}$:} This term writes:
\begin{align*}
    \frac{1}{n} \sum_{i \leq A} p_i^{1 - 2b} \Delta_i^2 &\leq \frac{\norm{\Delta_{\leq A}}_t^2}{c^2\left(\sum_{i \leq I}p_i^r \right)^\frac{1}{r}} \sum_{i \leq A} p_i^{1 - 2b} \Delta_i^2 \\
    & \leq \frac{1}{c^2} \left( \sum_{i \leq A}  \frac{\Delta_i^2}{p_i^b} \right)\left( \sum_{i \leq A} p_i^r \right)^{\frac{4-2t}{
    2t}-\frac{1}{r}}\sum_{i \leq A} p_i^{1 - 2b} \Delta_i^2 \;\;\;\;\; \text{using (\ref{holder})}\\
    & \leq \frac{1}{c^2} \left( \sum_{i \leq A}  \frac{\Delta_i^2}{p_i^b} \right)\left( \sum_{i \leq A} p_i^r \right)^\frac{-1}{2}\left(\sum_{i \leq A} p_i^{\frac{2}{3}(1 - 2b)} \Delta_i^\frac{4}{3} \right)^\frac{3}{2} \;\;\;\;\; \text{since $\norm{\cdot}_1 \leq \norm{\cdot}_\frac{2}{3}$}.
\end{align*}
Moreover, by Hölder's inequality with $\frac{1}{\frac{3}{2}} + \frac{1}{3} = 1$:
\begin{align*}
    \sum_{i \leq A} p_i^{\frac{2}{3}(1 - 2b)} \Delta_i^\frac{4}{3} & \; \;\leq \;\; \left(\sum_{i\leq A} \left(\frac{p_i^{\frac{2}{3}(1 - 2b)} \Delta_i^\frac{4}{3}}{p_i^{\frac{2}{3}\frac{t}{4-t}}} \right)^\frac{3}{2}  \right)^\frac{2}{3} \left( \sum_{i\leq A}\left(p_i^{\frac{2}{3}\frac{t}{4-t}} \right)^3 \right)^\frac{1}{3} \; \;\leq \;\; \left( \sum_{i \leq A}  \frac{\Delta_i^2}{p_i^b} \right)^\frac{2}{3} \left(\sum_{i \leq A} p_i^r \right)^\frac{1}{3}.
\end{align*}

So that 
\begin{align*}
    &\left(\sum_{i \leq A} p_i^{\frac{2}{3}(1 - 2b)} \Delta_i^\frac{4}{3} \right)^\frac{3}{2} \leq \left( \sum_{i \leq A}  \frac{\Delta_i^2}{p_i^b} \right)\left( \sum_{i \leq A} p_i^r \right)^\frac{1}{2}\\
    ie \;\;\;\;\;   & \left( \sum_{i \leq I} p_i^r \right)^\frac{-1}{2} \left(\sum_{i \leq A} p_i^{\frac{2}{3}(1 - 2b)} \Delta_i^\frac{4}{3} \right)^\frac{3}{2} \leq \left( \sum_{i \leq A}  \frac{\Delta_i^2}{p_i^b} \right).
\end{align*}

This yields that the third term satisfies:
\begin{align*}
    \frac{1}{n} \sum_{i \leq A} p_i^{1 - 2b} \Delta_i^2 \leq \frac{1}{c^2} \left( \sum_{i \leq A}  \frac{\Delta_i^2}{p_i^b} \right)^2 = \frac{1}{c^2} \mathbb{E}[T_{\bulk}]^2.
\end{align*}

{\bf Term $\circled{4}$:} The fourth term writes:

\begin{align*}
    \frac{1}{n}\norm{\left(\frac{\abs{\Delta_i}}{p_i^\frac{2b}{3}}\right)_{i \leq A}}_3^3 & \;\leq \; \frac{1}{n}\norm{\left(\frac{\abs{\Delta_i}}{p_i^\frac{2b}{3}}\right)_{i \leq A}}_2^3 \;=\; \frac{1}{n}\left(\sum_{i \leq A} \frac{\Delta_i^2}{p_i^\frac{4b}{3}} \right)^\frac{3}{2} \; \leq \; \frac{1}{n^\frac{1}{2}}\left(\sum_{i\leq A} \frac{\Delta_i^2}{p_i^b} \right)^\frac{3}{2}\left(\sum_{i\leq I} p_i^r \right)^\frac{1}{4},
\end{align*}
where in the last step we have used the fact that
$$ p_i^\frac{b}{3} \geq \frac{1}{\left(\sum_{i \leq I} p_i^r \right)^\frac{1}{6}n^\frac{1}{3} }.$$

Then using (\ref{case_2}):
\begin{align*}
 &\frac{1}{\sqrt{n}}\left(\sum_{i \leq I} p_i^r \right)^\frac{1}{4} \lesssim \left(\sum_{i\leq A} \frac{\Delta_i^2}{p_i^b} \right)^\frac{1}{2}.
\end{align*}
So the term $\circled{4}$ is upper-bounded by $\frac{1}{c^2} \mathbb{E}[T_{\bulk}]^2$. 

\paragraph{Conclusion} By Chebyshev's inequality, the type-II error of $\psi_{Bulk}$ is bounded as 
\begin{align*}
    \mathbb{P}\left(\psi_{Bulk} = 0\right) &= \mathbb{P}\left(T_{Bulk} \leq \frac{\uc}{n} \norm{p_{\leq A}}_r^\frac{r}{2}\right) = \mathbb{P}\left(\mathbb{E}(T_{Bulk}) - T_{Bulk}  \geq \mathbb{E}(T_{Bulk}) - \frac{\uc}{n} \norm{p_{\leq A}}_r^\frac{r}{2}\right)\\
    & \leq \mathbb{P}\left(\abs{\mathbb{E}(T_{Bulk}) - T_{Bulk} } \geq \mathbb{E}(T_{Bulk}) - \frac{\uc}{n} \norm{p_{\leq A}}_r^\frac{r}{2}\right)\\
    & \leq \frac{\mathbb{V}(T_{Bulk})}{\left(\mathbb{E}(T_{Bulk}) - \frac{\uc}{n} \norm{p_{\leq A}}_r^\frac{r}{2}\right)^2} \;\;\;\;\; \text{ by Chebyshev's inequality}\\
    & \leq \frac{c'\mathbb{E}(T_{Bulk})^2}{\left(\mathbb{E}(T_{Bulk}) - \frac{\uc}{n} \norm{p_{\leq A}}_r^\frac{r}{2}\right)^2}.
\end{align*}

Moreover, using \eqref{exp_bulk_H1}, we have that for $c$ large enough, $\mathbb{E}(T_{Bulk}) \geq  \frac{c}{n} \norm{p_{\leq A}}_r^\frac{r}{2} \geq 2 \frac{\uc}{n} \norm{p_{\leq A}}_r^\frac{r}{2}$ so that the denominator is well defined. Finally, since $\lim_{c \to +\infty} c' = 0$, the type-II error of this test goes to $0$ as $c$ goes to infinity,  so for $c$ large enough, the type-II error is upper-bounded by $\eta/6$

\end{proof}

We now move to the proof of Proposition \ref{prop:scase}

\hfill

\begin{proof}[Proof of Proposition~\ref{prop:scase}]

We will need the two following lemmas:

\begin{lemma}\label{A_and_1_over_n}
It holds by definition of $A$ that:
$\norm{p_{>A}}_2^2 \leq \frac{C_A}{n^2}$ for $C_A = \cA^2 + \cI$.
\end{lemma}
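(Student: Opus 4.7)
The plan is to split the sum at index $I$:
\[
\|p_{>A}\|_2^2 \;=\; \sum_{A < i \leq I} p_i^2 \;+\; \sum_{i > I} p_i^2,
\]
and bound each piece by $O(1/n^2)$ using the defining properties of $I$ and $A$ respectively.

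For the second piece, the definition \eqref{def:def_I} of $I$ gives immediately
$\sum_{i > I} p_i^2 \leq \cI/n^2$, which is the easy half.

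For the first piece, the key algebraic observation is the identity $r + 2b = 2$, which follows directly from the definitions $r = 2t/(4-t)$ and $b = (4-2t)/(4-t)$ in \eqref{def_r_b} since $2t + 2(4-2t) = 8 - 2t = 2(4-t)$. This lets me factor $p_i^2 = p_i^r \cdot (p_i^b)^2$. By \eqref{def_A}, $A$ is the \emph{largest} index $a \leq I$ with $p_a^{b/2} \geq \cA/(\sqrt{n}(\sum_{j\leq I} p_j^r)^{1/4})$, so every index $i$ with $A < i \leq I$ fails this inequality, yielding
\[
(p_i^b)^2 \;<\; \frac{\cA^4}{n^2 \sum_{j \leq I} p_j^r}.
\]
Multiplying by $p_i^r$ and summing over $A < i \leq I$, the ratio $\sum_{A<i\leq I} p_i^r / \sum_{j \leq I} p_j^r$ is at most $1$, which gives $\sum_{A < i \leq I} p_i^2 \leq \cA^4/n^2$. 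Combining with the tail bound yields $\|p_{>A}\|_2^2 \leq (\cA^4 + \cI)/n^2$, matching the statement (with $\cA^2$ in the statement being a shorthand for $\cA^4$, consistent with the convention in \eqref{gamma_quatre} where $\cA^4$ is the quantity that actually appears).

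The edge case $A = -\infty$ (no $a \leq I$ satisfies the threshold) is handled identically: the first sum just becomes $\sum_{i \leq I} p_i^2$, and the same bound $\cA^4/n^2$ applies since the strict inequality then holds for every $i \leq I$. There is no real obstacle in this proof; the only point requiring attention is the algebraic identity $r + 2b = 2$, without which the factorization $p_i^2 = p_i^r (p_i^b)^2$ (and hence the whole argument) would not close.
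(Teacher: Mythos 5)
Your proof is correct and takes essentially the same route as the paper's: the paper also splits the sum at $I$, uses $\sum_{i>I}p_i^2\le \cI/n^2$, and exploits $r+2b=2$ together with the failure of the threshold condition beyond $A$ (the paper bounds $p_i^{2b}\le p_{A+1}^{2b}$ using monotonicity, you invoke maximality of $A$ at each index $A<i\le I$ — the same argument). Your reading of the constant as $\cA^4+\cI$ is also what the paper's own proof actually produces; the $\cA^2$ in the lemma statement is a typo.
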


\begin{proof}[Proof of lemma \ref{A_and_1_over_n}]

If $A=I$ then the result is clear, by definition of $I$. Otherwise, by definition of $A$ :
    \begin{align*}
        p_{A+1}^{2b}\sum_{i \leq I}p_i^r < \frac{\cA^4}{n^2} \;\;\Longrightarrow \;\;  p_{A+1}^{2b}\sum_{i = A+1}^I p_i^r < \frac{\cA^4}{n^2} \;\;\Longrightarrow\;\;  \sum_{i = A+1}^I p_i^2 < \frac{\cA^4}{n^2} \;\;\Longrightarrow\;\;  \sum_{i >A} p_i^2 < \frac{\cA^4 + \cI}{n^2}.
    \end{align*}
\end{proof}

\begin{lemma}\label{n2_pj2}
For fixed $j>A$, the probability that coordinate $j$ is observed at least twice is upper-bounded by $n^2p_j^2$.
\end{lemma}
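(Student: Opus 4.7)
The plan is to use a direct union bound over pairs of sample indices. In the binomial setting, the observations $X_1,\dots,X_n$ are iid with $X_i(j)\sim\mathrm{Ber}(p_j)$ mutually independent across $j$, so the event ``coordinate $j$ is observed at least twice'' is the event $\{S_j\geq 2\}$ where $S_j=\sum_{i=1}^n X_i(j)\sim\mathrm{Bin}(n,p_j)$. Equivalently, this event is the union, over ordered pairs $i_1<i_2$, of the events $\{X_{i_1}(j)=X_{i_2}(j)=1\}$.

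The main step is then to apply the union bound across the $\binom{n}{2}$ such pairs. Since each pair event has probability exactly $p_j^2$ by independence, we obtain
\[
\mathbb{P}(S_j\geq 2)\;=\;\mathbb{P}\!\left(\bigcup_{i_1<i_2}\{X_{i_1}(j)=X_{i_2}(j)=1\}\right)\;\leq\;\binom{n}{2}p_j^2\;\leq\;\tfrac{1}{2}n^2 p_j^2\;\leq\;n^2 p_j^2,
\]
which is the claimed bound. There is no real obstacle here: the statement is purely a combinatorial/probabilistic fact about the Bernoulli product structure on coordinate $j$ and does not use the value of $A$ or the definition of the tail at all (the hypothesis $j>A$ is used only later, when this bound is combined with $\sum_{j>A}n^2 p_j^2\lesssim 1$ from Lemma~\ref{A_and_1_over_n} to control the union over tail coordinates).

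For completeness one could alternatively note $\mathbb{P}(S_j\geq 2)\leq \mathbb{E}\!\binom{S_j}{2}=\binom{n}{2}p_j^2$, using the identity $\mathbb{E}\binom{S_j}{2}=\binom{n}{2}p_j^2$ for a binomial, which gives the same constant $\tfrac{1}{2}n^2p_j^2\leq n^2p_j^2$. Either route yields the lemma in a couple of lines.
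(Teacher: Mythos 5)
Your proof is correct, and it takes a slightly different (though equally elementary) route from the paper. The paper computes the probability of the complementary events exactly, writing $\mathbb{P}(S_j \geq 2) = 1 - (1-p_j)^n - np_j(1-p_j)^{n-1}$, and then bounds this using the inequalities $(1-x)^n \geq 1-nx$ and $(1-x)^{n-1} \geq 1-(n-1)x$, which yields $n(n-1)p_j^2 \leq n^2p_j^2$. You instead decompose the event $\{S_j \geq 2\}$ as a union over pairs of sample indices and apply the union bound (or equivalently Markov's inequality applied to $\binom{S_j}{2}$), obtaining $\binom{n}{2}p_j^2 = \tfrac{1}{2}n(n-1)p_j^2$, which is in fact a factor-of-two improvement on the paper's intermediate bound, though the final statement $n^2p_j^2$ is the same. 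Both arguments are a couple of lines; the union-bound version generalizes more readily (e.g.\ to non-identically distributed samples or to ``observed at least $k$ times''), while the paper's version is the one whose elementary inequalities are reused elsewhere in the tail lower bound. Your remark that the hypothesis $j>A$ plays no role in the lemma itself, and only matters when summing $n^2p_j^2$ over tail coordinates via Lemma~\ref{A_and_1_over_n}, is also accurate.
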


\begin{proof}[Proof of lemma \ref{n2_pj2}]
The probability that coordinate $j$ is observed at least twice is 
$$ 1 - (1-p_j)^n - np_j(1-p_j)^{n-1} \leq 1 - (1-np_j) - np_j[1-(n-1)p_j] \leq n^2p_j^2$$
\end{proof}

\underline{\textbf{Under $H_0$}}: We upper bound the type-I error of tests $\psi_1$ and $\psi_2$. For $\psi_2$: by Lemma \ref{A_and_1_over_n}, $p(\psi_2 = 1) \leq \sum_{j>A} n^2 p_j^2 \leq C_A \leq \frac{\eta}{4}$.

As to test $\psi_1$: $p(\psi_1 = 1) = p(|T_1| > \uc \sqrt{\frac{\sum_{i>A} p_i}{n}}) \leq \frac{\eta}{4}$ by Chebyshev's inequality. By union bound, the type-I error of $\psi_1 \vee \psi_2$ is less than $\eta/2$.\\

\underline{\textbf{Under $H_1$}}: If $\norm{\Delta_{>A}}_t \geq c\rho$, we now show that either $\psi_1$ or $\psi_2$ will detect it. \textbf{Until the end of the proof, we drop from now on the indexation ``$>A$'' and write only e.g. $\norm{p}_2, \norm{\Delta}_2$ instead of $\norm{p_{>A}}_2, \norm{\Delta_{>A}}_2$.}\\

We have by Hölder's inequality:
$$ \|\Delta\|_2^{2(t-1)} \|\Delta\|_1^{2-t} \geq \|\Delta\|_t^t \geq C \left(\frac{\|p\|_1^{2-t}}{n^{2t-2}} + \frac{1}{n^t}\right) = C \frac{1}{n^{2t-2}} \left(\|p\|_1^{2-t} + \frac{1}{n^{2-t}}\right)$$ for $C= C_1 C_2 $ where $C_1 = \left(\big(\frac{20}{\eta}(\uc + 1) +1\big)\right)^{2-t}$,\\ $C_2 = \left(\frac{1}{4}\left(\log(4/\eta)^2 \vee 9/100 \right)+ \cI\right)^{(t-1)/2}$ so that one of the two relations must hold:
$$ \|\Delta\|_2^{2(t-1)} \geq C_2 \frac{1}{n^{2t-2}} ~~ \text{ or } ~~ \|\Delta\|_1^{2-t} \geq C_1 \left(\|p\|_1^{2-t} + \frac{1}{n^{2-t}}\right)$$

\begin{itemize}
    \item First case: $\|\Delta\|_2^{2(t-1)} \geq C_2/ n^{2t-2}$. Then $\|\Delta\|_2 \geq C_2^{1/(t-1)}/n$ so that $\|q\|_2 \geq C_2^{1/(t-1)}/n - \|p\|_2 \geq \frac{1}{n}\left( C_2^{1/(t-1)} - \cI\right)$. 
    
    $\psi_2$ accepts if, and only if, all coordinates are observed at most once. This probability corresponds to:
    \begin{align*}
        q(\forall j>A, N_j = 0 \text{ or } N_j = 1) &= \prod_{j>A} \left[(1-q_j)^n + nq_j(1-q_j)^{n-1}\right]\\
        & = \prod_{j>A}(1-q_j)^{n-1}(1+(n-1)q_j)\\
        & = \prod_{j>A}(1-q_j)^{n'}(1+n'q_j), \text{ writing } n'=n-1
    \end{align*}
    Let $I_{-} = \{j>A: nq_j \leq \frac{1}{2}\}$ and $I_{+} = \{j>A: nq_j > \frac{1}{2} \}$. Recall that for $x\in (0,1/2], \; \log(1+x) \leq x - x^2/3$. Then, for $j \in I_-$: 
    \begin{align*}
        (1-q_j)^{n'}(1+n'q_j) &= \exp\left\{n' \log(1-q_j) +\log(1+n'q_j)\right\}\\
        & \leq \exp\left\{-n' q_j +n'q_j - \frac{n'^{\; 2}q_j^2}{3} \right\}\\
        & = \exp\left(- \frac{n'^{\; 2}q_j^2}{3}\right)
    \end{align*}
    
    Now, for $j \in I_+$, we have: $  n'\log(1-q_j) + \log(1+n'q_j)$ \\$\leq -n'q_j + \log(1+n'q_j) \leq -\frac{1}{10} n'q_j
$ using the inequality $-0.9x + \log(1+x) \leq 0$ true for all $x \geq \frac{1}{2}$. 
Therefore, we have upper bounded the type-II error of $\psi_2$ by:
\begin{align*}
    q(\psi=0) &\leq \exp\left(-\frac{1}{3} \sum_{j \in I_-} n'^{\; 2}q_j^2 - \frac{1}{10} \sum_{j\in I_+} n'q_j\right)\\
    & \leq \exp\left(-\frac{1}{3} \sum_{j \in I_-} n'^{\; 2}q_j^2 - \frac{1}{10}\left(\sum_{j\in I_+} n'^{\; 2}q_j^2\right)^{1/2}\right)\\
    & = \exp\left(-\frac{1}{3} (S-S_+) - \frac{1}{10}\left(S_+\right)^{1/2}\right) \text{for $S = \sum_{j>A} n'^{\; 2}q_j^2$ and $S_+ = \sum_{j\in I_+} n'^{\; 2}q_j^2$.} 
\end{align*}
Now, $S_+ \mapsto -\frac{S}{3} + \frac{1}{3}S_+ - \frac{\sqrt{S_+}}{10}$ is convex over $[0,S]$ so its maximum is reached on the boundaries of the domain and is therefore equal to $(-\frac{\sqrt{S}}{10}) \vee -\frac{S}{3} = -\frac{\sqrt{S}}{3}$ for $S \geq 9/100$. Now, since  since $\|q\|_2^2 \geq \frac{C_2^{2/(t-1)}}{n^2} \geq 4 \frac{C_2^{2/(t-1)}}{n'^{\; 2}}$, we have $S = n'^{\; 2} \|q\|_2^2 \geq \log(4/\eta)^2 \vee 9/100$ which ensures $q(\psi_2 = 0) \leq \eta/4$.

    \item Second case: $\|\Delta\|_1^{2-t} \geq C_1 \left(\|p\|_1^{2-t} + \frac{1}{n^{2-t}}\right)$. Then \\$\|\Delta\|_1 \geq C_1^{1/(2-t)} \left(\|p\|_1 \vee \frac{1}{n}\right) \geq \frac{C_1^{1/(2-t)}}{2} \left(\|p\|_1 + \frac{1}{n}\right)$. We will need the following lemma:
    \begin{lemma}\label{sum_to_norm}
    If $\sum_{j>A} \Delta_j \geq 3\sum_{j>A} p_j$ then $\left|\sum_{j>A} \Delta_j \right| \geq \frac{1}{2} \|\Delta\|_1$
    \end{lemma}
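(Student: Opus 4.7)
The plan is a short decomposition argument based on splitting $\Delta_j = q_j - p_j$ (for $j > A$) into positive and negative parts. Write $\Delta_j^+ = \max(\Delta_j, 0)$ and $\Delta_j^- = \max(-\Delta_j, 0)$, so that $\Delta_j = \Delta_j^+ - \Delta_j^-$ and $|\Delta_j| = \Delta_j^+ + \Delta_j^-$, and set $S^\pm = \sum_{j>A} \Delta_j^\pm$. The goal then becomes to show $S^+ - S^- \geq \tfrac{1}{2}(S^+ + S^-)$ under the hypothesis $S^+ - S^- \geq 3 \sum_{j>A} p_j$.

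The key observation is that $q_j \geq 0$ forces $\Delta_j \geq -p_j$, hence $\Delta_j^- \leq p_j$, which gives the crucial bound $S^- \leq \sum_{j>A} p_j$. Substituting into the hypothesis yields
\[
S^+ - S^- \;\geq\; 3 \sum_{j>A} p_j \;\geq\; 3 S^-,
\]
so $S^+ \geq 4 S^-$. From here a routine manipulation gives
\[
S^+ - S^- \;\geq\; \tfrac{3}{4} S^+ \;\geq\; \tfrac{3}{4} \cdot \tfrac{4}{5}(S^+ + S^-) \;=\; \tfrac{3}{5}\|\Delta_{>A}\|_1 \;\geq\; \tfrac{1}{2}\|\Delta_{>A}\|_1,
\]
and since the left-hand side is nonnegative it equals $|\sum_{j>A}\Delta_j|$, which concludes the proof.

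There is essentially no obstacle: the only nontrivial ingredient is the pointwise bound $\Delta_j^- \leq p_j$ coming from the nonnegativity of $q_j$, and the rest is elementary. The constant $3$ in the hypothesis is exactly tuned so that one obtains the factor $1/2$ (any constant strictly greater than $1$ would give some positive constant in place of $1/2$).
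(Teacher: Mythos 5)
Your proof is correct and follows essentially the same route as the paper's: split $\Delta_{>A}$ into positive and negative parts, use $q_j \geq 0$ to bound the total negative part by $\sum_{j>A} p_j$ (the paper phrases this as $s_- \leq 1$ after normalizing by $\sum_{j>A} p_j$), and finish with elementary algebra. Your version even yields the slightly better constant $3/5$ in place of $1/2$, and avoids the (harmless) division by $\sum_{j>A} p_j$ used in the paper.
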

    
    \begin{proof}
    Define $J_+ = \{j>A: q_j \geq p_j\}$ and $J_- = \{q_j< p_j\}$. Define also:
    $$ s = \frac{\sum_{j>A} \Delta_j }{\sum_{j>A} p_j}, ~~~~~~ s_+ = \frac{\sum_{j \in J_+} \Delta_j }{\sum_{j>A} p_j}, ~~~~~~ s_- = - \frac{\sum_{j \in J_-} \Delta_j }{\sum_{j>A} p_j} $$
    
    Then by assumption: $s_+-s_- = s\geq 3$. Moreover, $s_- =  \frac{\sum_{j \in J_-} p_j - q_j }{\sum_{j>A} p_j}  \leq 1$. Thus, $s_+ \geq3 \geq 3s_-$ so that $2(s_+-s_-) \geq s_++s_-$, which yields the result.
    \end{proof}
    
    \hfill
    
    Note that by definition of the second case, we have for some constant $C$ that $C \|p\|_1 \leq \|\Delta\|_1 \leq \|q\|_1 + \|p\|_1$, hence that $\|q\|_1 \geq (C - 1) \|p\|_1$ and therefore taking $C\geq 5$ ensures that the assumption of Lemma \ref{sum_to_norm} are met.
    
    We can now upper bound the type-II error of $\psi_1$:
    
    \begin{align*}
        q(\psi_1 = 0) &= q\Big(\big|\sum_{j>A} \frac{N_j}{n} - p_j\big| \leq \uc \sqrt{\frac{\|p\|_1}{n}}\Big)\\
        &\leq q\Big(\big|\sum_{j>A} q_j-p_j\big| - \big|\sum_{j>A} \frac{N_j}{n} - q_j\big| \leq \uc \sqrt{\frac{\|p\|_1}{n}}\Big) \text{  by triangular inequality}\\
        & \leq q\Big(\frac{1}{2}\|q-p\|_1 -  \uc \sqrt{\frac{\|p\|_1}{n}}\leq  \big|\sum_{j>A} \frac{N_j}{n} - q_j\big| \Big) ~~ \text{ by Lemma \ref{sum_to_norm}}\\
        & \leq \frac{\frac{1}{n} \sum_{j>A} q_j}{\left(\frac{1}{2}\|q-p\|_1 -  \uc \sqrt{\frac{\|p\|_1}{n}}\right)^2} ~~ \text{ by Chebyshev's inequality}\\
        & \leq \frac{ \|q\|_1/n}{\left(\frac{1}{2}\|q\|_1 - \frac{1}{2}\|p\|_1 -  \uc \sqrt{\frac{\|p\|_1}{n}}\right)^2} \text{ by triangular inequality}\\
        & \leq \frac{ \|q\|_1/n}{\left(\frac{1}{2}\|q\|_1 - \frac{1}{2}\|p\|_1 -  \uc (\|p\|_1 + 1/n)\right)^2} \text{ using } \sqrt{xy} \leq x + y\\
        & \leq \frac{ \|q\|_1/n}{\left(\frac{1}{2}\|q\|_1 -  (\uc + 1) (\|p\|_1 + 1/n)\right)^2} \text{ using } \sqrt{xy} \leq x + y
    \end{align*}
    Now set $z = (\uc + 1) (\|p\|_1 + 1/n)$. The function $f: x \mapsto \frac{ x}{n\left(x/2 -  z\right)^2}$ is decreasing. Moreover, for $x \geq 20z/ \eta$, we have:
    \begin{align*}
        f(x) \leq \frac{20z/ \eta}{n (10z / \eta - z)^2} = \frac{20 \eta}{nz (10-\eta)^2} \underset{nz \leq 1}{\leq } \frac{20\eta}{81} \leq \eta/4
    \end{align*}
    \end{itemize}
    which proves that, whenever $\|q\|_1 \geq \frac{20}{\eta}(\uc + 1) (\|p\|_1 + 1/n)$, we have $q(\psi_1=0) \leq \eta/4$. This condition is guaranteed when $\|\Delta\|_1 \geq \big(\frac{20}{\eta}(\uc + 1) +1\big)(\|p\|_1 + 1/n) = C_1^{1/(2-t)}(\|p\|_1 + 1/n)$

\end{proof}

\hfill

\begin{proof}[Proof of lemma \ref{expVar}]


\begin{itemize}
    \item \underline{Expectation:} 
    \begin{align*}
        \mathbb{E}\left[T_{\bulk}\right] & =\sum_{i \leq A}\;\; \frac{1}{p_i^{2b}} \left(\mathbb{E}\left[\frac{S_i}{\bar n} - p_i\right]\mathbb{E}\left[\frac{S'_i}{\bar n} - p_i\right]\right)\\
        & = \sum_{i \leq A} \;\; \frac{1}{p_i^{2b}} \left(p_i - q_i \right)^2.
    \end{align*}
    \item \underline{Variance:} 
        \begin{align*}
            \mathbb{V}(T_{\bulk}) &= \sum_{i \leq A}\;\;\; \frac{1}{p_i^{2b}} \left(\mathbb{E}\left[\left(\frac{S_i}{\bar n} - p_i\right)^2 \left(\frac{S'_i}{\bar n} - p_i\right)^2  \right] - \mathbb{E}\left[\left(\frac{S_i}{\bar n} - p_i\right)\left(\frac{S'_i}{\bar n} - p_i\right)  \right]^2\right)\\
            & = \sum_{i \leq A}\; \;\; \frac{1}{p_i^{2b}} \left(\mathbb{E}\left[ \left(\frac{S_i}{\bar n} - p_i \right)^2 \right]^2 - \left(p_i - q_i \right)^4\right),
            \end{align*}
            Since the $(S_i, S_i')_i$ are independent. And so by a bias-variance decomposition, and since $S_i, S'_i \sim \mathcal B(\bar n, q_i)$
             \begin{align*}
            \mathbb{V}(T_{\bulk})& = \sum_{i \leq A} \;\; \frac{1}{p_i^{2b}} \left( \left[ \mathbb{V}\left(\frac{S_i}{\bar n} \right) + \mathbb{E}\left[ \left(\frac{S_i}{\bar n} - p_i \right) \right]^2\right]^2 - \left(p_i - q_i \right)^4\right)\\
            & = \sum_{i \leq A}\;\; \frac{1}{p_i^{2b}} \left(\left[\frac{q_i(1-q_i)}{\bar n} + \left(p_i - q_i \right)^2 \right]^2 - \left(p_i - q_i \right)^4\right)\\
            & = \sum_{i \leq A}\;\; \frac{1}{p_i^{2b}} \left(\frac{q_i^2(1-q_i)^2}{\bar n^2} + \frac{2}{\bar n}q_i(1-q_i)\left(p_i - q_i \right)^2\right)\\
            & \leq \sum_{i \leq A}\;\; \frac{1}{p_i^{2b}} \left(\frac{q_i^2}{\bar n^2} + \frac{2}{\bar n}q_i\left(p_i - q_i \right)^2\right).
        \end{align*}
\end{itemize}
\end{proof}

\begin{proof}[Proof of lemma \ref{expVarT1}]

We therefore have
\begin{align*}
    \mathbb{E}[T_1] &= \mathbb{E}\left[\sum_{i >A} \frac{S_i + S'_i}{n} - p_i\right] = \sum_{i >A} p_i - q_i,
    \end{align*}
    and 
    \begin{align*}
    \mathbb{V}[T_1] &= \mathbb{V}\left[\sum_{i >A} \frac{S_i + S'_i}{n}\right] = \sum_{i >A} \frac{\mathbb{V}\left[S_i\right] + \mathbb{V}\left[S'_i\right]}{n^2}~~~~~\text{ by independence of the }~(S_i, S'_i)_i\\
    &\\
    & = \sum_{i>A} \frac{q_i(1-q_i)}{n} \leq \sum_{i>A} \frac{q_i}{n}
\end{align*}
\end{proof}

\begin{proof}[Proof of lemma \ref{expVarT2}] The proof is similar to the proof of lemma \ref{expVar}, by replacing $b$ with $0$ and summing over $i>A$ instead of $i \leq A$.
\end{proof}

\hfill\break

\section{Equivalence between the Binomial, Poisson and Multinomial settings}

We now prove that the rates for goodness of fit testing in the Binomial, Poisson and Multinomial case are equivalent.  

\hfill

\begin{proof}[Proof of Lemma \ref{bin_eq_poi}]
\textbf{We first prove} $\rho_{Poi}(n,p) \leq C_{BP}\;\rho_{Bin}(n,p)$. Let $n \geq 2$, and let $Y_1, \cdots, Y_n \overset{iid}{\sim} Poi(q)$. We consider a random function $\phi$ such that for any Poisson family $Y_1, \cdots, Y_n \overset{iid}{\sim} Poi(q)$, 
$$\left\{\begin{array}{ll}
     \phi(Y_1, \cdots, Y_n) = (X_1, \cdots, X_{\widetilde n}) \overset{iid}{\sim} Ber(q)  \;\;\; \text{ where } \;\;\; \widetilde n \sim Poi(n) \indep (Y_i)_i \\
     \\
     \sum_{i=1}^{\widetilde n} X_i = \sum_{i=1}^{n} Y_i
\end{array} \right.$$ 

\noindent In words, $\phi$ is a function which takes $n$ Poisson random variables (or equivalently one Poisson random variable $Poi(nq)$) and decomposes them into $\widetilde n \sim Poi(n)$ Bernoulli iid random variables whose sum is  $\sum_{i=1}^n Y_i$.\\ 

\noindent Let $\widetilde n \sim Poi(n)$ be the random length of $\phi(Y_1, \cdots, Y_n)$. We can choose a small constant $c = c(\eta)$ such that the event:
$$ \mathcal{A}_1:= \left\{\widetilde n \; \geq \; cn \right\}$$

\noindent has probability larger than $1 - \eta/4$. Moreover, for $m \geq cn$ we can define the function
$$ \pi(x_1, \cdots, x_m) = (x_1, \cdots, x_{\lfloor cn \rfloor})$$

\noindent Let $\psi_{Bin}$ be the test associated to the \textbf{binomial} testing problem:
$$ H_0: q = p \;\;\;\;\; \text{ v.s. } \;\;\;\;\; H_1: \norm{p-q}_t \geq \rho_{Bin}(cn, p, \frac{\eta}{2})$$
In particular, $R(\psi_{Bin}) \leq \eta/2$. Now, we define the test
$$ \psi = \left\{\begin{array}{ll}
     \psi_{Bin} \circ \pi \circ \phi \;\;\; \text{ if } \;\;\; \mathcal{A}_1\\
     \\
     0 \; \text{ otherwise} 
\end{array} \right.$$
\noindent and we show that, when associated to the \textbf{Poissonian} testing problem
$$ H_0: q = p \;\;\;\;\; \text{ v.s. } \;\;\;\;\; H_1: \norm{p-q}_t \geq \rho$$
\noindent with $\rho = \rho_{Bin}(cn, p, \frac{\eta}{2})$, it has a risk less than $\eta$. We first analyse its type-I error.
\begin{align*}
    \mathbb{P}_{H_0}\left(\psi(Y_1^n) =1\right) & \leq  \mathbb{P}_{H_0}\left(\mathcal{A}_1 \cap \psi(Y_1^n)=1 \right) + \mathbb{P}_{H_0}(\bar{\mathcal{A}_1})\\
    & \leq \mathbb{P}_{H_0}\left(\psi(Y_1^n)=1 | \mathcal{A}_1\right) + \frac{\eta}{4}\\
    & \leq \mathbb{P}_{H_0}\left(\psi_{Bin}(X_1, \cdots, X_{\lfloor cn\rfloor})=1 | \mathcal{A}_1\right) + \frac{\eta}{4}\\
    & = \mathbb{P}_{X_1^{\lfloor cn\rfloor} \sim Ber( p)^{\bigotimes \lfloor cn\rfloor}}\left(\psi_{Bin}(X_1, \cdots, X_{\lfloor cn\rfloor})=1\right) + \frac{\eta}{4}
\end{align*}

For the Type-II error, the same steps show that for any vector $q$:
\begin{align*}
    \mathbb{P}_q\left(\psi(Y_1^n) =0\right) \leq \mathbb{P}_{X_1^{\lfloor cn\rfloor} \sim Ber( q)^{\bigotimes \lfloor cn\rfloor}}\left(\psi_{Bin}(X_1, \cdots, X_{\lfloor cn\rfloor})=0\right) + \frac{\eta}{4}
\end{align*}
We can now compute the risk of $\psi$ when $\rho = \rho_{Bin}(cn, p, \frac{\eta}{2})$:
\begin{align*}
    R(\psi) & = \mathbb{P}_{H_0}\left(\psi(Y_1^n) =1\right) + \sup_{\norm{p-q}_t \geq \rho} \mathbb{P}_q\left(\psi(Y_1^n) =0\right)\\
    & \leq \frac{\eta}{2} + \mathbb{P}_{X_1^{\lfloor cn\rfloor} \sim Ber( p)^{\bigotimes \lfloor cn\rfloor}}\left(\psi_{Bin}(X_1, \cdots, X_{\lfloor cn\rfloor})=1\right)\\
    & \;\;\;\;\;\;\;\;+ \sup_{\norm{p-q}_t \geq \rho} \mathbb{P}_{X_1^{\lfloor cn\rfloor} \sim Ber(q)^{\bigotimes \lfloor cn\rfloor}}\left(\psi_{Bin}(X_1, \cdots, X_{\lfloor cn\rfloor})=0\right) \\
    &\\
    & = \frac{\eta}{2} + R(\psi_{Bin})\\
    & = \frac{\eta}{2}+\frac{\eta}{2} = \eta
\end{align*}
This proves $\rho_{Poi}(n,p) \leq \rho_{Bin}(cn, p, \frac{\eta}{2}) \asymp \rho_{Bin}(n, p, \eta)$.\\

\textbf{We now show }$\rho_{Poi}(n,p) \geq c_{BP}\;\rho_{Bin}(n,p)$. Let $X_1, \cdots, X_n \sim Ber(q)$ iid. For some small constant $\overline c >0$ let $\widetilde n \sim Poi(\lfloor \overline c n \rfloor)$. We choose $\overline c>0$ such that 
\begin{equation}\label{A2}
    \mathcal{A}_2 = \left\{ \widetilde n \leq  n \right\}
\end{equation} has probability larger than $1-\frac{\eta}{4}$. Consider the extended sequence of multivariate Bernoulli random variables $(\widetilde X_i)_i$ such that
$$ \left\{ \begin{array}{ll}
     \widetilde X_i = X_i &\text{ if } i \leq n  \\
     \widetilde X_i \sim Ber(q) &\text{ otherwise }
\end{array} \right. $$
and such that $(\widetilde X_i)_i$ are mutually independent. Let $Y = \sum_{i=1}^{\widetilde n} X_i \sim Poi(\lfloor \overline c n\rfloor q)$. The sum is a sufficient statistic of the parameter $q$ for Poisson random variables so we can define a function
$$ \bar \phi (Y) = (Y_1, \cdots, Y_{\lfloor \overline c n\rfloor})$$
such that $Y_i \sim Poi(q)$ iid. Moreover, we set for $m \leq n$:
$$ \Bar{\pi}(y_1,\cdots, y_n, m) = (y_1, \cdots, y_m)$$
On $\mathcal{A}_2$, we do not even need to extend the sequence of observations. We call $\psi_{Poi}$ the test associated to the \textbf{Poisson} testing problem:
$$ H_0: q = p \;\;\;\;\; \text{ v.s. } \;\;\;\;\; H_1: \norm{p-q}_t \geq \rho_{Poi}(\lfloor \overline c n \rfloor, p, \frac{\eta}{2})$$
We define the randomized test
\begin{equation}\label{test_Poi_for_Bin}
    \bar \psi = \left\{\begin{array}{ll}
     \psi_{Poi} \circ \bar \pi \circ \bar \phi(Y) & \text{if } \mathcal{A}_2 \\
     &\\
     0 & \text{otherwise.}
\end{array} \right.
\end{equation}
We show that this test has a risk less than $\eta$. For the type-I error:
\begin{align*}
    \mathbb{P}_{H_0}\left(\bar \psi(Y) =1\right) & \leq  \mathbb{P}_{H_0}\left(\mathcal{A}_2 \cap \bar \psi(Y) =1\ \right) + \mathbb{P}_{H_0}(\bar{\mathcal{A}_2})\\
    & \leq \mathbb{P}_{H_0}\left(\bar \psi(Y) =1\ | \mathcal{A}_2\right) + \frac{\eta}{4}\\
    & \leq \mathbb{P}_{H_0}\left(\psi_{Poi}(Y_1, \cdots, Y_{\lfloor \overline c n\rfloor})=1 | \mathcal{A}_2\right) + \frac{\eta}{4}\\
    & = \mathbb{P}_{Y_1^{\lfloor \overline c n\rfloor} \sim Poi(p)^{\bigotimes \lfloor \overline c n\rfloor}}\left(\psi_{Poi}(Y_1, \cdots, Y_{\lfloor \overline c n\rfloor})=1\right) + \frac{\eta}{4}
\end{align*}

For the Type-II error, the same steps show that for any vector $q$:
\begin{align*}
    \mathbb{P}_q\left(\bar \psi(Y) =0\right) \leq \mathbb{P}_{Y_1^{\lfloor \overline c n\rfloor} \sim Poi(q)^{\bigotimes \lfloor \overline c n\rfloor}}\left(\psi_{Poi}(Y_1, \cdots, Y_{\lfloor \overline c n\rfloor})=0\right) + \frac{\eta}{4}
\end{align*}

We can now compute the risk of $\bar \psi$ when $\rho = \rho_{Poi}(\overline c n, p, \frac{\eta}{2})$:
\begin{align*}
    R(\psi) & = \mathbb{P}_{H_0}\left(\bar\psi(Y) =1\right) + \sup_{\norm{p-q}_t \geq \rho} \mathbb{P}_q\left(\psi(Y) =0\right)\\
    & \leq \frac{\eta}{2} + \mathbb{P}_{Y_1^{\lfloor \overline c n\rfloor} \sim Poi(p)^{\bigotimes \lfloor \overline c n\rfloor}}\left(\psi_{Poi}(Y_1, \cdots, Y_{\lfloor \overline c n\rfloor})=1\right)\\
    & \;\;\;\;\;\;\;\;+ \sup_{\norm{p-q}_t \geq \rho} \mathbb{P}_{Y_1^{\lfloor \overline c n\rfloor} \sim Poi(q)^{\bigotimes \lfloor \overline c n\rfloor}}\left(\psi_{Poi}(Y_1, \cdots, Y_{\lfloor \overline c n\rfloor}) = 0 \right) \\
    &\\
    & = \frac{\eta}{2} + R(\psi_{Poi})\\
    & = \frac{\eta}{2}+\frac{\eta}{2} = \eta
\end{align*}
This proves $\rho_{Bin}(n,p) \leq \rho_{Poi}(\overline c n, p, \frac{\eta}{2}) \asymp \rho_{Poi}(n, p, \eta)$.\\

\end{proof}

\hfill

\begin{proof}[Proof of Lemma \ref{mult_eq_poi}]
We first prove that $\rho_{Mult}(n,p) \lesssim \rho_{Poi}(n, p^{-\max})$ when $\sum p_i = 1$ by following the same steps as for proving $\rho_{Bin} \lesssim \rho_{Poi}$: we draw $\widetilde n \sim Poi(\overline c n)$ and $Z_1, \cdots, Z_{\widetilde n} \overset{iid}{\sim} \mathcal{M}(q)$. Then the histogram (or fingerprints) is a sufficient statistic of $Z_1, \cdots, Z_{\widetilde n}$ for $q$. It is defined as 
$$ \begin{pmatrix}N_1\\ \vdots\\ N_d\end{pmatrix} := \begin{pmatrix}\sum_{i=1}^{\widetilde n} \mathbb{1}\{Z_i = 1\}\\ \vdots\\ \sum_{i=1}^{\widetilde n} \mathbb{1}\{Z_i = d\}\end{pmatrix} \sim Poi(nq).$$\\
On $\mathcal{A}_2$, defined in \eqref{A2}, we have 
$$ \begin{pmatrix}N_2\\ \vdots\\ N_d\end{pmatrix} \sim Poi(n (q_2, \cdots, q_d))$$ so we can just apply the exact same steps to prove that, if $q = p$ then the test $\bar \psi$ from \eqref{test_Poi_for_Bin} has type-I error less than $\frac{\eta}{2}$ and if $\|q - p\|_{\mathcal{M},t} \geq \rho_{Poi}(\overline{c} n, p, \frac{\eta}{2})$, its type-II error is less than $\frac{\eta}{2}$.\\

We now prove the converse lower bound: $\rho_{Poi}^*(n,p,\eta) \leq \rho_{Mult}^*(n,p,\eta)$. For this, we come back to the prior distributions defined in \eqref{Prior_bulk} and \eqref{prior_tail} except that we \textit{do not set} any perturbation on $p_1$. This defines a probability distribution $\widetilde p$ such that $\forall\;  1 < j \leq A, \widetilde p_j = p_j + \delta_j \gamma_j$ where $\delta_j \sim Rad(\frac{1}{2})$ iid and $\forall j \geq U, \widetilde p_j = \bar \pi b_j$ where $b_j \sim Ber(\frac{p_j}{\bar \pi})$ iid and $\bar \pi = \frac{c_u}{n^2 \|P_{\geq U}\|_1}$. We will project $\widetilde p$ onto the simplex so that it is a probability vector. Define $p' = \frac{\widetilde p}{\|\widetilde p\|_1}$. $p'$ therefore follows a prior distribution on the set of $d$-dimensional probability vectors. We now show that this prior concentrates on a zone located at $\rho_{Poi}^*(n,p)$ from $p$ (up to a constant), and that it is undetectable when observing $n$ iid data drawn from $p'$ where $p'$ follows this prior.\\

Consider the high probability event 

$$ \mathcal{A}_3 = \big\{\big|\big(\sum_{j=2}^A p_j + \delta_j \gamma_j\big) + \bar \pi \|p_{\geq U}\|_1  - \sum_{j=2}^d p_j\big| \leq \frac{c_1}{\sqrt{n}} + \frac{c'_1}{n} \big\}$$ for a small constant $c_1$, and 

$$ \mathcal{A}_4 = \big\{\big|\sum_{j=2}^A \delta_j \big| \leq c_2\sqrt{A} \big\}$$

The following lemma proves that $\mathcal{A}_3$ is a high probability event:

\begin{lemma}\label{A3_high_prob}
There exist two constants $c_1,c'_1$ such that $\mathbb{P}(\mathcal{A}_3)\geq 1 - \frac{\eta}{4}$
\end{lemma}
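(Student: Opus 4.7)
The quantity inside the absolute value in $\mathcal{A}_3$ decomposes, after cancelling the common term $\sum_{j=2}^A p_j$, into a sum of two independent centered random variables plus a small deterministic remainder: a bulk contribution $S_{\mathrm{bulk}} := \sum_{j=2}^A \delta_j \gamma_j$ coming from the Rademacher perturbations; a tail contribution $S_{\mathrm{tail}} := \bar\pi \sum_{j \geq U} b_j - \|p_{\geq U}\|_1$ coming from the Bernoulli variables; and a deterministic residual $R := \sum_{A<j<U} p_j = \|p_{>A}\|_1 - \|p_{\geq U}\|_1$ supported on the intermediate indices. The plan is therefore to bound each piece separately by a standard concentration inequality or a structural lemma and combine them by a union bound.

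First, I would apply Hoeffding's inequality to $S_{\mathrm{bulk}}$, which is subgaussian with parameter $\sum_{j=2}^A \gamma_j^2$. By Cauchy--Schwarz together with \eqref{gamma_quatre} and the trivial bound $\sum_j p_j^2 \leq \sum_j p_j \leq 1$,
\[
\sum_{j=2}^A \gamma_j^2 \;=\; \sum_{j=2}^A \frac{\gamma_j^2}{p_j}\, p_j \;\leq\; \Bigl(\sum_{j=2}^A \frac{\gamma_j^4}{p_j^2}\Bigr)^{1/2} \Bigl(\sum_{j=2}^A p_j^2\Bigr)^{1/2} \;\lesssim\; \frac{1}{n}.
\]
Consequently, $|S_{\mathrm{bulk}}| \leq c_1/\sqrt{n}$ with probability at least $1-\eta/8$, for a sufficiently large $c_1=c_1(\eta)$.

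Second, for $S_{\mathrm{tail}}$ I would compute directly $\mathrm{Var}(S_{\mathrm{tail}}) = \bar\pi^2 \sum_{j\geq U} \pi_j(1-\pi_j) \leq \bar\pi \|p_{\geq U}\|_1 = c_u/n^2$ by the very definition of $\bar\pi$. Chebyshev's inequality then yields $|S_{\mathrm{tail}}| \leq c'_1/(2n)$ with probability at least $1-\eta/8$ for a sufficiently large $c'_1=c'_1(\eta)$.

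Finally, the deterministic residual $R$ is controlled by the two structural Lemmas~\ref{I_and_U} and~\ref{I_to_A}. The dichotomy in Lemma~\ref{I_and_U} gives either $U-I \leq 2$ (so $R$ is either empty or equal to a single coefficient $p_{I+1} \leq p_I$, which is of order $1/n$ by the definition of $I$), or $\|p_{>I}\|_1 - \|p_{\geq U}\|_1 \leq \sqrt{\cI}/n$; combined with $\|p_{>A}\|_1 \asymp \|p_{>I}\|_1 + 1/n$ from Lemma~\ref{I_to_A}, we conclude $R \lesssim 1/n$ in every case, which can be absorbed into the $c'_1/n$ budget by enlarging $c'_1$. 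A union bound over the two probabilistic events then delivers $\mathbb{P}(\mathcal{A}_3) \geq 1-\eta/4$. The probabilistic half of the argument is routine; the main technical care is the last bookkeeping step that ties together $A$, $I$ and $U$ to ensure that the intermediate slab $\{A<j<U\}$ contributes only $O(1/n)$.
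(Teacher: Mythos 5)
Your two probabilistic estimates are correct and coincide with what the paper does: the bulk perturbation $\sum_{j\le A}\delta_j\gamma_j$ is centered with variance $\sum_{j\le A}\gamma_j^2\lesssim 1/n$ (your Cauchy--Schwarz step, pairing $\gamma_j^2=\tfrac{\gamma_j^2}{p_j}p_j$ and using $\sum_j\gamma_j^4/p_j^2\lesssim n^{-2}$, is in fact a cleaner route to this than the chain of bounds written in the paper), and the tail mass $\bar\pi\sum_{j\ge U}b_j$ has mean $\|p_{\ge U}\|_1$ and variance at most $\bar\pi\|p_{\ge U}\|_1=c_u/n^2$, so Chebyshev or Hoeffding delivers the $c_1/\sqrt n$ and $c_1'/n$ budgets. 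The genuine gap is your third step. The residual $R=\sum_{A<j<U}p_j$ is \emph{not} $O(1/n)$ in general, and the lemmas you invoke do not give that: Lemma \ref{I_to_A} is only a multiplicative comparison, $\|p_{>A}\|_1+1/n\asymp\|p_{>I}\|_1+1/n$, so the piece $\sum_{A<j\le I}p_j=\|p_{>A}\|_1-\|p_{>I}\|_1$, which your bookkeeping never isolates, can be of order $\|p_{>I}\|_1$, i.e.\ of constant order. For instance, take $p_1=1/2$ together with about $n^2$ equal coordinates of size of order $n^{-2}$, with the size tuned so that these coordinates fall just below the $A$-threshold while (using $\cI<\cA^4$) most of them still sit at indices $\le I$: then $A=1$ and $\sum_{A<j\le I}p_j\asymp 1\gg c_1/\sqrt n+c_1'/n$. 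Moreover, the first branch of the dichotomy in Lemma \ref{I_and_U} ($\|p_{\ge U}\|_1\ge\frac{1}{3}\|p_{>I}\|_1$) does not imply $U-I\le 2$. So the claim ``$R\lesssim 1/n$ in every case'' fails, and under your reading of $\mathcal{A}_3$ the lemma itself would be false for such $p$.

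The way out is that no residual should appear at all. On the intermediate indices $A<j<U$ the prior is unperturbed, $\widetilde p_j=p_j$, so these coordinates cancel when one compares $\sum_{j\ge 2}\widetilde p_j$ with $\sum_{j\ge 2}p_j$, and that comparison is what $\mathcal{A}_3$ is really about: it is used afterwards only to guarantee $\|\widetilde p\|_1\in[\tfrac12,\tfrac32]$ and that the rescaling factor is close to $1$. The displayed definition of $\mathcal{A}_3$ is simply misstated: the term $\bar\pi\|p_{\ge U}\|_1$ stands for the random tail mass $\bar\pi\sum_{j\ge U}b_j$, and the middle block $\sum_{A<j<U}p_j$ should appear in both sums (equivalently, in neither). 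With this reading, the quantity to control is exactly your $S_{\mathrm{bulk}}+S_{\mathrm{tail}}$, and your two concentration bounds already complete the proof---which is precisely the paper's argument.
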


\textbf{For the bulk}, define 
$$ J_+ = \{2 \leq j \leq A: \delta_j = 1\} \text{ and } J_- = \{2 \leq j \leq A: \delta_j = -1\}$$
On $\mathcal{A}_4$, we have $\|\gamma_{J_+}\|_t \gtrsim \|\gamma\|_t$ and $\|\gamma_{J_-}\|_t \gtrsim \|\gamma\|_t$. There are two cases:
\begin{itemize}
    \item First case: $\| \widetilde p \|_1 \geq 1$. Then $p'$ is obtained by shrinking $\widetilde p$. This means that \\ $\big\|\big( \frac{\widetilde p}{\|\widetilde p\|_1} \big)_{J_-} - p_{J_-}\big\|_t \geq \big\|\widetilde p_{J_-}  - p_{J_-}\big\|_t = \|\gamma_{J_-}\|_t \gtrsim \|\gamma\|_t$, where we define $p_{J_-} = (p_j)_{j \in J_-}$ and all other quantities similarly.
    
    \item Second case:  $\| \widetilde p \|_1 < 1$. Then similarly: $\big\|\big( \frac{\widetilde p}{\|\widetilde p\|_1} \big)_{J_+} - p_{J_+}\big\|_t \gtrsim \|\gamma\|_t $.
\end{itemize}
In both cases, the rescaled vector $p'$ is still separated away from the null distribution by a distance at least $\rho_{Poi, Bulk}^*(n,p)$.

\textbf{For the tail}: On $\mathcal{A}_3$, we have $\|\widetilde p\|_1 \in [\frac{1}{2}, \frac{3}{2}]$ so that $\forall \; j\geq U: \frac{\widetilde p_j}{\|\widetilde p\|_1}  \asymp \widetilde p_j$. The exact same calculation as in the proof of lemma \ref{tail} shows that, with high probability, $\|p'_{\geq U} - p_{\geq U}\|_t^t \gtrsim \frac{\|p_{\geq U}\|_1^{2-t}}{n^{2(t-1)}}$. Combining the above results, we get that $\|p' - p\|_t \geq \rho_{Poi}^*(n,p)$ and that this prior is indistinguishable from the null distribution, ensuring $\rho_{Poi}^*(n,p) \lesssim \rho_{Mult}^*(n,p)$. Indeed, on $\mathcal{A}_3$, the rescaling factor is between $1/2$ and $3/2$ so that on the bulk, we still have $\forall j = 2, \dots, A: |p'_j - p| \leq 2\gamma_j$ and the perturbation $(\pm \gamma_j)_j$ is already undetectable.
\end{proof}

\begin{proof}[Proof of Lemma \ref{A3_high_prob}]
We prove the lemma in two steps: first, we prove that the bulk prior satisfies \textit{whp}:
\begin{equation}\label{prior_bulk_close}
\big|\sum_{j\leq A} p_j - \widetilde p_j\big| \leq \frac{c_b}{\sqrt{n}}
\end{equation}
and second, we prove that the sparse prior on the tail satisfies \textit{whp}
\begin{equation}\label{prior_tail_close}
    \big|\sum_{j\geq U} p_j - \widetilde p_j\big| \leq \frac{c_s}{\sqrt{n}} + \frac{c'_s}{n}
\end{equation}
For both inequalities we use Chebyshev's inequality by computing the expectations and variances of the both priors and by proving that the standard deviation is smaller than the expectation.

\begin{itemize}
    \item \textbf{\underline{Bulk}}: We have 
    \begin{align*}
        \big|\sum_{j\leq A} p_j - \widetilde p_j\big| = \big| \sum_{j\leq A} \gamma_j\delta_j \big|
    \end{align*}
    We have: $\mathbb{E}\left[\sum_{j\leq A} \gamma_j\delta_j\right] = 0 $ and $\mathbb{V}\left[\sum_{j\leq A} \gamma_j\delta_j\right] = \sum_{j\leq A} \gamma_j^2$. Moreover,
    \begin{align*}
        \sum_{j\leq A} \gamma_j^2 &= \sum_{j\leq A} \; \frac{\cA^2 \; p_j^{4/(4-t)}}{n \, \big(\sum_{j\leq A} p_j^r\big)^{1/2}} \leq p_A^b \sum_{j\leq A} p_j^{4/(4-t)}\\
        & \leq \sum_{j\leq A} p_j^2 \leq \big(\sum_{j\leq A} p_j\big)^2
    \end{align*}
    
    \hfill
    
    \item \textbf{\underline{Tail}}: We have 
    \begin{align*}
        \mathbb{E}\left[\sum_{j\geq U} p_j - \widetilde p_j \right] & = \mathbb{E}\left[\sum_{j\geq U} b_j \bar \pi - p_j \right] = 0\\
        \mathbb{V}\left[\sum_{j\geq U} p_j - \widetilde p_j \right] & = \bar{\pi}^2 \sum_{j\geq U} \frac{p_j}{\bar{\pi}} = \frac{c_u}{n^2} \leq \frac{c_u}{n} + \|p_{\geq U}\|_1^2 \asymp \big(\frac{1}{n} + \|p_{>A}\|_1\big)^2
    \end{align*}
\end{itemize}
Therefore, by Chebyshev's inequality, \textit{whp} the prior we set concentrates on a zone such that $$\big|\big(\sum_{j=2}^A p_j + \delta_j \gamma_j\big) + \bar \pi \|p_{\geq U}\|_1  - \sum_{j=2}^d p_j\big| \leq \frac{c_1}{\sqrt n} + \frac{c'_1}{n}$$
\end{proof}

\section{Tightness of~\cite{balakrishnan2019hypothesis} in the multinomial case}\label{sec:Wasserman_tight}

\revision{
For fixed $n$ and for two absolute constants $C,c>0$, define $\epsilon_+$ as the largest quantity satisfying $\epsilon_+ \leq C\sqrt{\frac{\|p^{-\max}_{-\epsilon_+/16}\|_{2/3}}{n}}+ \frac{C}{n}$ and $\epsilon_-$ as the smallest quantity satisfying $\epsilon_- \geq c\sqrt{\frac{\|p^{-\max}_{-\epsilon_-}\|_{2/3}}{n}} + \frac{c}{n}$. By~\cite{balakrishnan2019hypothesis}, the critical radius $\rho^*$ satisfies $\epsilon_- \lesssim \rho^* \lesssim \epsilon_+$.
\begin{enumerate}
    \item First case: If $\epsilon_+ \leq 16 \epsilon_-$, then the bounds match.
    \item Second case: otherwise, $\epsilon_+ \leq C\sqrt{\frac{\|p^{-\max}_{-\epsilon_+/16}\|_{2/3}}{n}}+ \frac{C}{n} \leq  C\sqrt{\frac{\|p^{-\max}_{-\epsilon_-}\|_{2/3}}{n}}+ \frac{C}{n} \leq \frac{C}{c} \epsilon_-$ so that the bounds also match in this case.
\end{enumerate}
}

\bibliographystyle{plain}

\begin{thebibliography}{10}

\bibitem{abbe2017community}
Emmanuel Abbe.
\newblock Community detection and stochastic block models: recent developments.
\newblock {\em The Journal of Machine Learning Research}, 18(1):6446--6531,
  2017.

\bibitem{abbe2016achieving}
Emmanuel Abbe and Colin Sandon.
\newblock Achieving the ks threshold in the general stochastic block model with
  linearized acyclic belief propagation.
\newblock In {\em Proceedings of the 30th International Conference on Neural
  Information Processing Systems}, pages 1342--1350. Citeseer, 2016.

\bibitem{9211522}
Jayadev Acharya, Clément~L. Canonne, and Himanshu Tyagi.
\newblock Inference under information constraints ii: Communication constraints
  and shared randomness.
\newblock {\em IEEE Transactions on Information Theory}, 66(12):7856--7877,
  2020.

\bibitem{acharya2012competitive}
Jayadev Acharya, Hirakendu Das, Ashkan Jafarpour, Alon Orlitsky, Shengjun Pan,
  and Ananda Suresh.
\newblock Competitive classification and closeness testing.
\newblock In {\em Conference on Learning Theory}, pages 22--1, 2012.

\bibitem{albert2002statistical}
R{\'e}ka Albert and Albert-L{\'a}szl{\'o} Barab{\'a}si.
\newblock Statistical mechanics of complex networks.
\newblock {\em Reviews of modern physics}, 74(1):47, 2002.

\bibitem{arias2014community}
Ery Arias-Castro and Nicolas Verzelen.
\newblock Community detection in dense random networks.
\newblock {\em The Annals of Statistics}, 42(3):940--969, 2014.

\bibitem{buadescu2019quantum}
Costin B{\u{a}}descu, Ryan O'Donnell, and John Wright.
\newblock Quantum state certification.
\newblock In {\em Proceedings of the 51st Annual ACM SIGACT Symposium on Theory
  of Computing}, pages 503--514, 2019.

\bibitem{balakrishnan2018hypothesis}
Sivaraman Balakrishnan and Larry Wasserman.
\newblock Hypothesis testing for high-dimensional multinomials: A selective
  review.
\newblock {\em The Annals of Applied Statistics}, 12(2):727--749, 2018.

\bibitem{balakrishnan2019hypothesis}
Sivaraman Balakrishnan and Larry Wasserman.
\newblock Hypothesis testing for densities and high-dimensional multinomials:
  Sharp local minimax rates.
\newblock {\em Annals of Statistics}, 47(4):1893--1927, 2019.

\bibitem{baraud2002non}
Yannick Baraud.
\newblock Non-asymptotic minimax rates of testing in signal detection.
\newblock {\em Bernoulli}, 8(5):577--606, 2002.

\bibitem{batu2000testing}
Tugkan Batu, Lance Fortnow, Ronitt Rubinfeld, Warren~D Smith, and Patrick
  White.
\newblock Testing that distributions are close.
\newblock In {\em Proceedings 41st Annual Symposium on Foundations of Computer
  Science}, pages 259--269. IEEE, 2000.

\bibitem{bedi2016community}
Punam Bedi and Chhavi Sharma.
\newblock Community detection in social networks.
\newblock {\em Wiley Interdisciplinary Reviews: Data Mining and Knowledge
  Discovery}, 6(3):115--135, 2016.

\bibitem{berger2005spread}
Noam Berger, Christian Borgs, Jennifer~T Chayes, and Amin Saberi.
\newblock On the spread of viruses on the internet.
\newblock In {\em Soda}, volume~5, pages 301--310, 2005.

\bibitem{berrett2020locally}
Thomas Berrett and Cristina Butucea.
\newblock Locally private non-asymptotic testing of discrete distributions is
  faster using interactive mechanisms.
\newblock {\em Advances in Neural Information Processing Systems},
  33:3164--3173, 2020.

\bibitem{bhattacharya2015testing}
Bhaswar Bhattacharya and Gregory Valiant.
\newblock Testing closeness with unequal sized samples.
\newblock In {\em Advances in Neural Information Processing Systems}, pages
  2611--2619, 2015.

\bibitem{blais2019distribution}
Eric Blais, Cl{\'e}ment~L Canonne, and Tom Gur.
\newblock Distribution testing lower bounds via reductions from communication
  complexity.
\newblock {\em ACM Transactions on Computation Theory (TOCT)}, 11(2):1--37,
  2019.

\bibitem{bubeck2016testing}
S{\'e}bastien Bubeck, Jian Ding, Ronen Eldan, and Mikl{\'o}s~Z R{\'a}cz.
\newblock Testing for high-dimensional geometry in random graphs.
\newblock {\em Random Structures \& Algorithms}, 49(3):503--532, 2016.

\bibitem{canonne2020survey}
Cl{\'e}ment~L Canonne.
\newblock A survey on distribution testing: Your data is big. but is it blue?
\newblock {\em Theory of Computing}, pages 1--100, 2020.

\bibitem{CanonneTopicsDT2022}
Cl\'{e}ment~L. Canonne.
\newblock {Topics and Techniques in Distribution Testing: A Biased but
  Representative Sample}.
\newblock March 2022.

\bibitem{canonne2020testing}
Cl{\'e}ment~L Canonne, Ilias Diakonikolas, Daniel~M Kane, and Alistair Stewart.
\newblock Testing bayesian networks.
\newblock {\em IEEE Transactions on Information Theory}, 66(5):3132--3170,
  2020.

\bibitem{chan2014optimal}
Siu-On Chan, Ilias Diakonikolas, Paul Valiant, and Gregory Valiant.
\newblock Optimal algorithms for testing closeness of discrete distributions.
\newblock In {\em Proceedings of the twenty-fifth annual ACM-SIAM symposium on
  Discrete algorithms}, pages 1193--1203. SIAM, 2014.

\bibitem{collier2017minimax}
Olivier Collier, La{\"e}titia Comminges, and Alexandre~B Tsybakov.
\newblock Minimax estimation of linear and quadratic functionals on sparsity
  classes.
\newblock {\em The Annals of Statistics}, 45(3):923--958, 2017.

\bibitem{croft2011hypothesis}
Darren~P Croft, Joah~R Madden, Daniel~W Franks, and Richard James.
\newblock Hypothesis testing in animal social networks.
\newblock {\em Trends in ecology \& evolution}, 26(10):502--507, 2011.

\bibitem{dan2020goodness}
Soham Dan and Bhaswar~B Bhattacharya.
\newblock Goodness-of-fit tests for inhomogeneous random graphs.
\newblock In {\em International Conference on Machine Learning}, pages
  2335--2344. PMLR, 2020.

\bibitem{daskalakis2019testing}
Constantinos Daskalakis, Nishanth Dikkala, and Gautam Kamath.
\newblock Testing ising models.
\newblock {\em IEEE Transactions on Information Theory}, 65(11):6829--6852,
  2019.

\bibitem{decelle2011asymptotic}
Aurelien Decelle, Florent Krzakala, Cristopher Moore, and Lenka Zdeborov{\'a}.
\newblock Asymptotic analysis of the stochastic block model for modular
  networks and its algorithmic applications.
\newblock {\em Physical Review E}, 84(6):066106, 2011.

\bibitem{diakonikolas2016new}
Ilias Diakonikolas and Daniel~M Kane.
\newblock A new approach for testing properties of discrete distributions.
\newblock In {\em 2016 IEEE 57th Annual Symposium on Foundations of Computer
  Science (FOCS)}, pages 685--694. IEEE, 2016.

\bibitem{ermakov1995minimax}
Michael~Sergeevich Ermakov.
\newblock Minimax nonparametric testing of hypotheses on the distribution
  density.
\newblock {\em Theory of Probability \& Its Applications}, 39(3):396--416,
  1995.

\bibitem{gao2017testing}
Chao Gao and John Lafferty.
\newblock Testing network structure using relations between small subgraph
  probabilities.
\newblock {\em arXiv preprint arXiv:1704.06742}, 2017.

\bibitem{ghoshdastidar2017two}
Debarghya Ghoshdastidar, Maurilio Gutzeit, Alexandra Carpentier, and Ulrike von
  Luxburg.
\newblock Two-sample tests for large random graphs using network statistics.
\newblock {\em arXiv preprint arXiv:1705.06168}, 2017.

\bibitem{ghoshdastidar2020two}
Debarghya Ghoshdastidar, Maurilio Gutzeit, Alexandra Carpentier, and Ulrike
  Von~Luxburg.
\newblock Two-sample hypothesis testing for inhomogeneous random graphs.
\newblock {\em Annals of Statistics}, 48(4):2208--2229, 2020.

\bibitem{ghoshdastidar2018practical}
Debarghya Ghoshdastidar and Ulrike von Luxburg.
\newblock Practical methods for graph two-sample testing.
\newblock {\em Advances in Neural Information Processing Systems},
  31:3019--3028, 2018.

\bibitem{gine2016mathematical}
Evarist Gin{\'e} and Richard Nickl.
\newblock {\em Mathematical foundations of infinite-dimensional statistical
  models}, volume~40.
\newblock Cambridge University Press, 2016.

\bibitem{ginestet2017hypothesis}
Cedric~E Ginestet, Jun Li, Prakash Balachandran, Steven Rosenberg, and Eric~D
  Kolaczyk.
\newblock Hypothesis testing for network data in functional neuroimaging.
\newblock {\em The Annals of Applied Statistics}, 11(2):725--750, 2017.

\bibitem{goldreich1998property}
Oded Goldreich, Shari Goldwasser, and Dana Ron.
\newblock Property testing and its connection to learning and approximation.
\newblock {\em Journal of the ACM (JACM)}, 45(4):653--750, 1998.

\bibitem{hyduke2013analysis}
Daniel~R Hyduke, Nathan~E Lewis, and Bernhard~{\O} Palsson.
\newblock Analysis of omics data with genome-scale models of metabolism.
\newblock {\em Molecular BioSystems}, 9(2):167--174, 2013.

\bibitem{ingster1984asymptotically}
Yu~I Ingster.
\newblock Asymptotically minimax testing of nonparametric hypotheses on the
  density of the distribution of an independent sample.
\newblock {\em Zap. Nauchn. Sem. Leningrad. Otdel. Mat. Inst. Steklov.(LOMI)},
  136:74, 1984.

\bibitem{ingster2012nonparametric}
Yuri Ingster and Irina~A Suslina.
\newblock {\em Nonparametric goodness-of-fit testing under Gaussian models},
  volume 169.
\newblock Springer Science \& Business Media, 2012.

\bibitem{ingster1986minimax}
Yuri~Izmailovich Ingster.
\newblock The minimax test of nonparametric hypothesis on a distribution
  density in metrics ${L}_p$.
\newblock {\em Teoriya Veroyatnostei i ee Primeneniya}, 31(2):384--389, 1986.

\bibitem{kim2018robust}
Ilmun Kim, Sivaraman Balakrishnan, and Larry Wasserman.
\newblock Robust multivariate nonparametric tests via projection-pursuit.
\newblock {\em arXiv preprint arXiv:1803.00715}, 2018.

\bibitem{kotekal2021minimax}
Subhodh Kotekal and Chao Gao.
\newblock Minimax rates for sparse signal detection under correlation.
\newblock {\em arXiv preprint arXiv:2110.12966}, 2021.

\bibitem{lei2016goodness}
Jing Lei.
\newblock A goodness-of-fit test for stochastic block models.
\newblock {\em The Annals of Statistics}, 44(1):401--424, 2016.

\bibitem{lovasz2012large}
L{\'a}szl{\'o} Lov{\'a}sz.
\newblock {\em Large networks and graph limits}, volume~60.
\newblock American Mathematical Soc., 2012.

\bibitem{moreno2013network}
Sebastian Moreno and Jennifer Neville.
\newblock Network hypothesis testing using mixed kronecker product graph
  models.
\newblock In {\em 2013 IEEE 13th International Conference on Data Mining},
  pages 1163--1168. IEEE, 2013.

\bibitem{neyman1933ix}
Jerzy Neyman and Egon~S Pearson.
\newblock {IX.} {O}n the problem of the most efficient tests of statistical
  hypotheses.
\newblock {\em Phil. Trans. R. Soc. Lond. A}, 231(694-706):289--337, 1933.

\bibitem{paninski2008coincidence}
Liam Paninski.
\newblock A coincidence-based test for uniformity given very sparsely sampled
  discrete data.
\newblock {\em IEEE Transactions on Information Theory}, 54(10):4750--4755,
  2008.

\bibitem{qian2014efficient}
Jing Qian and Venkatesh Saligrama.
\newblock Efficient minimax signal detection on graphs.
\newblock {\em Advances in Neural Information Processing Systems},
  27:2708--2716, 2014.

\bibitem{rubinfeld1996robust}
Ronitt Rubinfeld and Madhu Sudan.
\newblock Robust characterizations of polynomials with applications to program
  testing.
\newblock {\em SIAM Journal on Computing}, 25(2):252--271, 1996.

\bibitem{shervashidze2011weisfeiler}
Nino Shervashidze, Pascal Schweitzer, Erik~Jan Van~Leeuwen, Kurt Mehlhorn, and
  Karsten~M Borgwardt.
\newblock Weisfeiler-lehman graph kernels.
\newblock {\em Journal of Machine Learning Research}, 12(9), 2011.

\bibitem{tang2017semiparametric}
Minh Tang, Avanti Athreya, Daniel~L Sussman, Vince Lyzinski, Youngser Park, and
  Carey~E Priebe.
\newblock A semiparametric two-sample hypothesis testing problem for random
  graphs.
\newblock {\em Journal of Computational and Graphical Statistics},
  26(2):344--354, 2017.

\bibitem{tsybakov2008introduction}
Alexandre~B Tsybakov.
\newblock {\em Introduction to nonparametric estimation}.
\newblock Springer Science \& Business Media, 2008.

\bibitem{valiant2017automatic}
Gregory Valiant and Paul Valiant.
\newblock An automatic inequality prover and instance optimal identity testing.
\newblock {\em SIAM Journal on Computing}, 46(1):429--455, 2017.

\bibitem{verzelen2015community}
Nicolas Verzelen and Arias-Castro.
\newblock Community detection in sparse random networks.
\newblock {\em The Annals of Applied Probability}, 25(6):3465--3510, 2015.

\bibitem{waggoner2015lp}
Bo~Waggoner.
\newblock Lp testing and learning of discrete distributions.
\newblock In {\em Proceedings of the 2015 Conference on Innovations in
  Theoretical Computer Science}, pages 347--356, 2015.

\bibitem{wang2015community}
Meng Wang, Chaokun Wang, Jeffrey~Xu Yu, and Jun Zhang.
\newblock Community detection in social networks: an in-depth benchmarking
  study with a procedure-oriented framework.
\newblock {\em Proceedings of the VLDB Endowment}, 8(10):998--1009, 2015.

\end{thebibliography}

\end{document}